\documentclass[a4paper, english, 12pt]{amsart}
\usepackage{geometry}
\usepackage[T1]{fontenc}
\usepackage[utf8]{inputenc}	
\usepackage{babel}
\usepackage{amsmath}
\usepackage{amssymb}
\usepackage{amsthm}
\usepackage{textcomp}
\usepackage{enumitem}
\usepackage{dsfont}
\usepackage{mathrsfs}
\usepackage{latexsym}
\usepackage{xcolor}
\usepackage{graphicx}
\usepackage[]{mdframed}
\usepackage{cite}
\usepackage{color}
\usepackage[colorlinks=true, linkcolor=blue]{hyperref}

\numberwithin{equation}{section}
\numberwithin{figure}{section}
\def\R{{\mathbb R}}
\def\C{{\mathbb C}}

\def\T{{\mathbb T}}

\def\N{{\mathbb N}}

\newcommand{\ii}{\mathrm{i}}

\DeclareMathOperator{\Id}{Id}

\def\build#1_#2^#3{\mathrel{
\mathop{\kern 0pt#1}\limits_{#2}^{#3}}}
\def\td_#1,#2{\mathrel{\mathop{\build\longrightarrow_{#1\rightarrow #2}^{}}}}
\DeclareFontFamily{U}{MnSymbolC}{}
\DeclareSymbolFont{MnSyC}{U}{MnSymbolC}{m}{n}
\DeclareFontShape{U}{MnSymbolC}{m}{n}{
    <-6>  MnSymbolC5
   <6-7>  MnSymbolC6
   <7-8>  MnSymbolC7
   <8-9>  MnSymbolC8
   <9-10> MnSymbolC9
  <10-12> MnSymbolC10
  <12->   MnSymbolC12}{}
\DeclareMathSymbol{\intprod}{\mathbin}{MnSyC}{'270}

\newtheorem{theorem}{Theorem}

\newtheorem{proposition}{Proposition}
\newtheorem{lemma}{Lemma}
\newtheorem{remark}{Remark}
\newtheorem{definition}{Definition}

\title[An Explicit Formula for the Benjamin--Ono Hierarchy with Applications]{An Explicit Formula for the Benjamin--Ono Hierarchy with Applications to Traveling Waves and Zero-Dispersion Limits} 

\date{\today}

\author[P.~G\'erard]{Patrick G\'erard} \address{Laboratoire de Mathématiques d'Orsay,  Université Paris-Saclay, Orsay, France} \email{patrick.gerard@universite-paris-saclay.fr}

\author[J.~He]{Jiao He} \address{Laboratoire de Mathématiques d'Orsay,  Université Paris-Saclay, Orsay, France} \email{jiao.he@universite-paris-saclay.fr}

\keywords{Benjamin--Ono hierarchy, explicit formulae, traveling waves, zero dispersion limit, integrability}

\subjclass{37K10, 35Q53, 35Q51}

\begin{document}

\maketitle

\begin{abstract}
In this paper, we first extend the explicit formula \cite{gerard2023explicit} for the classical Benjamin--Ono equation to each flow of the Benjamin--Ono hierarchy on line. We then use this representation to derive two main applications. First, we obtain a complete classification of traveling wave solutions for all higher-order flows in the hierarchy.
Second, we analyze the zero-dispersion limit for the corresponding small-dispersion flows. For every fixed time $t\in\mathbb R$, we prove that, at any time, the solution converges weakly in $L^2(\mathbb R)$ as the dispersion parameter tends to $0$, and we  provide a geometric characterization of the limit in terms of an alternating sum, which yields the higher-order analogue of the  formula obtained in \cite{miller2011zero}, \cite{Gerard2025small}  for the Benjamin--Ono equation.
\end{abstract}

\tableofcontents

\section{Introduction}
The Benjamin–Ono (BO) equation was introduced independently by Benjamin \cite{benjamin1967internal} and
Davis– Acrivos \cite{davis1967solitary} in 1967, and was later further developed by Ono \cite{ono1975algebraic} in 1975. 
It serves as a model for long, unidirectional internal gravity waves in a two-layer fluid with infinite depth, see e.g. the book \cite{klein2021} and \cite{paulsen2024justification} for a recent derivation. The equation is given by
\begin{equation}\label{BO}
	\partial_tu- \partial_x |D| u + \partial_x (u^2)= 0, \quad  u(0, x) = u_0(x),	
	\tag{BO}
\end{equation}
where $u=u(t,x)$ is a real--valued function and $|D|$ denotes the Fourier multiplier $\mathcal{F}(|D|u)(t,\xi)=|\xi|\mathcal{F}(u)(t,\xi)$. 
The well-posedness theory for the \eqref{BO} equation in Sobolev spaces was pioneered by Saut \cite{saut1979}, while the sharp result was recently established by Killip and Vişan \cite{killip2024sharp}, who proved that the equation is well-posed in $H^s(\mathbb{R})$ for all $s > -1/2$. For a comprehensive overview of related results, we refer the reader to \cite{killip2024sharp} and the book by Klein and Saut \cite{klein2021}. 
Beyond well-posedness, significant progress has been made in characterizing the long-term asymptotic behavior of solutions. On the line, the soliton resolution conjecture has been proved \cite{blackstone2025benjamin, gassot2026proof} for all $H^1(\mathbb{R})$ initial data satisfying sufficient decay conditions at infinity, by using an \textit{explicit formula} discovered by the first author \cite{gerard2023explicit}. This formula is indeed an expression of the complete integrability, which is a fundamental property of the \eqref{BO} equation. Another consequence of the complete integrability is that, it admits an infinite number of conserved quantities $\{E_n\}_{n \geq 0}$; see Nakamura \cite{nakamura1979backlund} and Fokas-Fuchssteiner \cite{ fokas1981hierarchy} for a proof on the real line. The BO hierarchy consists of the evolution equations associated to the conservation laws \cite{matsuno1984bilinear},
\begin{equation}\label{BOhier}
	\partial_t u = \partial_x (\nabla E_n(u)).
\end{equation}
In particular, the \eqref{BO} equation corresponds to the second non-trivial flow, and can be written in Hamiltonian form \cite{gerard2021integrability, sun2021complete}
\begin{equation}\label{eq:BOEner}
	\partial_t u = \partial_x (\nabla E_1(u)), \qquad E_1 (u):= \frac{1}{2} \langle |D|u, u \rangle_{H^{-1/2}, H^{1/2}} - \int_\R \frac{1}{3} u^3 dx
\end{equation}
where $\langle \cdot, \cdot \rangle_{H^{-1/2}, H^{1/2}}$ denotes the duality bracket.

Let us also mention some earlier results on the Cauchy problem for higher-order \eqref{BO} equations and related models. For the third equation in the BO hierarchy on the real line, Feng and Han \cite{fenghan} proved local well-posedness in $H^s(\R)$ for $s\ge 4$. For a more general third-order \eqref{BO} equation, Molinet and Pilod \cite{molinet2012global} established global well-posedness in $H^s(\R)$ for $s\ge 1$. For fourth-order BO-type equations, Tanaka \cite{tanaka2021local} proved local well-posedness in $H^s(\R)$ for $s>\frac{7}{2}$, and in the integrable case deduced global well-posedness in $H^s(\R)$ for $s\ge 4$.

Rather than developing a general Cauchy theory for the whole hierarchy, the emphasis of the present paper is different and relies on the integrable structure. Our purpose is threefold. First, we derive a general \emph{explicit formula} for the solutions of each flow of the BO hierarchy on the real line, extending the previously well-known explicit formula for the classical \eqref{BO} equation \cite{gerard2023explicit}. Second, as a first application of this formula, we provide a complete classification of traveling waves for every flow in the hierarchy. Third, we apply this formula to study the zero-dispersion limit for the BO hierarchy and obtain a precise description of the weak limit.

Before presenting our main results, we review the Lax pair structure for \eqref{BO} equation and the BO hierarchy. 

\subsection{Notation}\label{subsection:notation}
Throughout the paper, we will use the following closed subspace of $L^2 (\R)$,
$$
L^2_+(\R):=\left\{ f\in L^2(\R): \forall \xi <0, \hat f(\xi)=0 \right\} \ ,
$$
where $\hat f$ denotes the Fourier transform. 
We recall that $L^2_+(\R)$ identifies to holomorphic fonctions $f$ on the complex upper half plane $\C_+ := \{z \in \C : \mathrm{Im}(z) >0  \}$ such that
$$
\sup_{y\,>\,0}\int_\R \left|f\left(x+ \ii y\right)\right|^2\, dx\ <+\infty \ .
$$ 
The orthogonal projector from $L^2(\R)$ onto $L^2_+(\R)$ is given by 
$$
\widehat{\Pi f}(\xi) : = \textrm{1}_{\xi \geq 0} \hat f (\xi). 
$$
On $L^2_+(\mathbb{R})$, we introduce the infinitesimal generator of the adjoint Lax–Beurling semi–group, namely the operator $X^*$ such that
$$
\forall \eta \geq 0, \quad S(\eta)^* = e^{- \ii \eta X^*}.$$
Its operator domain consists of those functions $f \in L^2_+(\mathbb{R})$ such that the restriction of $ \widehat{f}$ to the half-line $ (0, +\infty) $ belongs to the Sobolev space $H^1(0, +\infty)$, i.e.
$$
\mathrm{Dom} (X^*) = \{f \in L_+^2 (\R) : \hat{f}_{|]0, +\infty [ } \in H^1 (0, +\infty)  \}.
$$
Moreover, 
$$
\widehat{X^*f}(\xi) = \ii \frac{d}{d\xi}[\widehat{f}(\xi)] \mathbf{1}_{\xi > 0}.$$
For every $f \in \mathrm{Dom}(X^*) $, we define
$$
I_+(f) := \widehat{f}(0^+) = \lim_{\varepsilon \to 0^+} \left\langle f \mid \chi_\varepsilon \right\rangle_{L^2}, \quad \text{with} \quad \chi_\varepsilon := \frac{1}{1- \ii \varepsilon x} \in L^2_+ (\R).$$
Given $b\in L^\infty (\R)$, we define the Toeplitz operator on $L_+^2(\R)$ with symbol $b$ by
$$T_b : L^2_+ \to L^2_+, \;\;\;f \mapsto T_b (f) := \Pi (bf).$$
This operator is bounded, with $\|T_b\| = \| b \|_{L^\infty}$.\\
If $B$ is a bounded operator on $L^2_+(\R)$ and $(B_n)$ denotes a sequence of bounded operators on $L^2_+(\R)$, we write
$ B_n\xrightarrow[s]{} B$
to denote strong convergence, namely, $$\forall f\in L^2_+(\R),\quad \|B_nf-Bf\|_{L^2(\R)}\to 0.$$

Furthermore, we denote by $L^2_r$ the $\R$-Hilbert space $L^2(\R, \R)$, consisting of elements $u \in L^2$ which are real valued. 
For every $u \in L^2_r$, the
semi-bounded self-adjoint operator $L_{u} $ acting on the Hardy space $L^2_+$ is defined as
$$
\forall f \in \mathrm{Dom} (L_u) = H^1_+ : = H^1 (\R) \cap L^2_+ (\R), \quad L_u f := D f- T_u f \; \text{with }\; D := \frac{1}{\ii} \frac{d}{dx}.
$$
Alternatively, $|D| = \operatorname{H} \partial_{x}$, where $\operatorname{H}$ denotes the Hilbert transform. 

\subsection{The explicit formulae}
In \cite{gerard2023explicit}, the first author introduced an explicit formula for the solution of the equation \eqref{BO} with initial data $u_0 \in L^2_r (\R) \cap L^\infty (\R)$. This  formula is based on identities between the operators from the Lax pair and the Lax–Beurling semi–group on the Hardy space. Our first main result generalizes this explicit formula to the entire BO hierarchy \eqref{BOhier}.
\begin{theorem} \label{mainBOhier}
Let $n \in \mathbb{N}$ and $u_0 \in H^s_r(\mathbb{R})$ with $s > n + \frac{1}{2}$. Let $ u \in C(\mathbb{R}, H^s_r(\mathbb{R}))$ be a solution to the $(n+1)$-the order flow of the BO hierarchy \eqref{BOhier} on the line $\mathbb{R} $ with $ u(0) = u_0 $.  
Then $u (t, x)= \Pi u(t, x) + \overline{\Pi u(t, x)}, \ x\in \R  $, with
\begin{equation}\label{BOhierexpfor}
\forall\; z \in \mathbb{C}_+, \quad \Pi u(t, z) = \frac{1}{2 \ii  \pi} \, I_+ \left[ \left( X^* -  (n+1) t L_{u_0}^n - z \mathrm{Id} \right)^{-1} \Pi u_0 \right].
\end{equation}
\end{theorem}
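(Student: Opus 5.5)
We follow the strategy of \cite{gerard2023explicit} for \eqref{BO}, with $L_u^2$ replaced by $L_u^{n+1}$. The plan has four steps: write a Lax pair for the $(n+1)$-th flow, derive the commutator identity with $X^*$, propagate it to obtain a conjugation identity, and then evaluate $\Pi u(t,z)$ through $I_+$ and the resolvent of $X^*$.

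\textbf{Step 1: Lax pair.} The first step is to record the Lax pair for the $(n+1)$-th flow of \eqref{BOhier}. For $u\in C(\R,H^s_r)$ solving it, we look for a skew-adjoint operator $B^{(n)}_u$ on $L^2_+$ such that
$$\frac{d}{dt}L_{u(t)}=[B^{(n)}_{u(t)},L_{u(t)}].$$
For $n=1$ one has $B_u=\ii(T_{|D|u}-T_u^2)$. For general $n$, $B^{(n)}_u$ should equal $\ii L_u^{n+1}$ plus a correction $C_u^{(n)}$. The correction is a polynomial in Toeplitz operators whose symbols involve derivatives of $u$ up to order $n$ and $\Pi$ of products; the regularity $s>n+\tfrac12$ guarantees that these symbols are bounded. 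Rather than computing $C^{(n)}_u$ explicitly, we would define the hierarchy through the generating function $\langle (L_u+\lambda)^{-1}\Pi u,\Pi u\rangle$, as in \cite{sun2021complete,gerard2021integrability}. From it we would extract:
\begin{itemize}
\item the equality $\partial_t\Pi u=B^{(n)}_u\Pi u-\ii L_u^{n+1}\Pi u$, or an analogue of it;
\item the key structural identity
$$[X^*,L_u]f=\ii f+\tfrac{\ii}{2\pi}I_+(f)\,\Pi u,$$
which is already proved for $n=1$ in \cite{gerard2023explicit}.
\end{itemize}

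\textbf{Step 2: Commutator with $X^*$.} Next we compute $[X^*,B^{(n)}_u]$. The aim is to show that, on the appropriate domain,
$$[X^*,B^{(n)}_u]=(n+1)L_u^{n}+(\text{terms that vanish when applied to } \Pi u \text{ or are killed by } I_+).$$
The mechanism is that $X^*$ acts as a derivation and $[X^*,L_u]\simeq \ii\,\Id$ modulo the rank-one term. This gives
$$[X^*,\ii L_u^{n+1}]=-(n+1)L_u^n+\text{rank-one terms},$$
so the operator $X^*+(n+1)tL_u^n$ (up to sign conventions) is formally conserved along the Lax conjugation.

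\textbf{Step 3: Conjugation identity.} Let $U(t)$ be the unitary solution of $U'=B^{(n)}_{u}U$ with $U(0)=\Id$. We would prove
$$U(t)^*X^*U(t)=X^*-(n+1)tL_{u_0}^n\quad\text{on }\mathrm{Dom}(X^*)$$
(with the appropriate sign), together with
$$U(t)^*\Pi u(t)=e^{-\ii tL_{u_0}^{n+1}}\Pi u_0$$
and the invariance of $I_+$ under conjugation, via $\langle\cdot\mid\chi_\varepsilon\rangle$ limits. The proof is by differentiating in $t$ and using Steps 1–2.

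\textbf{Step 4: Evaluation.} We use $\Pi u(t,z)=\frac{1}{2\ii\pi}I_+[(X^*-z)^{-1}\Pi u(t)]$, valid for $z\in\C_+$, and conjugate by $U(t)$. The resulting expression involves $(X^*-(n+1)tL_{u_0}^n-z)^{-1}$ acting on $e^{-\ii tL_{u_0}^{n+1}}\Pi u_0$. As in \cite{gerard2023explicit}, the factor $e^{-\ii tL^{n+1}}$ is then absorbed. Its derivative produces a commutator with $X^*-(n+1)tL^n$ which, by the Step 2 identity, is rank-one and proportional to $I_+$; a Duhamel/trace argument shows that it contributes nothing. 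This yields \eqref{BOhierexpfor}.

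\textbf{Main obstacle.} The hardest step is Step 2 for general $n$: controlling the correction $C_u^{(n)}$ in $B^{(n)}_u$ and showing that its commutator with $X^*$ produces only terms annihilated in the final formula. Domain issues for unbounded $L^n$ combined with $X^*$ add to the difficulty. We would first try to avoid explicit formulas by establishing everything for the resolvent generating function $(L_u+\lambda)^{-1}$ and then extracting powers of $\lambda$. Two further technical points need care:
\begin{itemize}
\item justifying that $X^*-(n+1)tL_{u_0}^n-z$ is invertible for $z\in\C_+$, which we would obtain from the unitary conjugation;
\item handling the domain, for which we would use density and $s>n+\tfrac12$.
\end{itemize}
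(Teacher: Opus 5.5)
Your outline (Lax pair, commutator with $X^*$, conjugation of $X^*$, evaluation through $I_+$) follows the paper's architecture, but Steps~3--4 do not close as written.

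\textbf{Steps 3--4: mismatched propagators.} The exact conjugation $U^*X^*U=X^*-(n+1)tL_{u_0}^n$ and the relation $U^*\Pi u(t)=e^{-\ii tL_{u_0}^{n+1}}\Pi u_0$ come from two different propagators. With the bounded generator $B^{(n)}_u$ of Lemma~\ref{lemma:boundedB}, the paper obtains
\[
U^*X^*U=e^{\ii tL_{u_0}^{n+1}}X^*e^{-\ii tL_{u_0}^{n+1}}-(n+1)tL_{u_0}^n,\qquad U^*\Pi u(t)=e^{\ii tL_{u_0}^{n+1}}\Pi u_0 .
\]
Now take $V(t)=U(t)e^{\ii tL_{u_0}^{n+1}}$, whose generator is $B^{(n)}_u+\ii L_u^{n+1}$; this is presumably what your ansatz $\ii L_u^{n+1}+C^{(n)}_u$ is aiming at. For $V$ the conjugation is exact, see \eqref{eq:sanwich_n}. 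But then $V^*\Pi u(t)=\Pi u_0$ and $V^*\chi_\varepsilon=\chi_\varepsilon+o(1)$, so there is no phase left to absorb.

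Mixing the two produces a spurious factor $e^{-\ii tL_{u_0}^{n+1}}$, and the Duhamel argument of Step~4 cannot remove it. First, $[X^*-(n+1)tL_{u_0}^n,L_{u_0}^{n+1}]=[X^*,L_{u_0}^{n+1}]=(n+1)\ii L_{u_0}^n+(\text{finite rank})$ is not rank-one. Second, the phase genuinely survives. For $u_0=R_p$, $\Pi u_0$ is an eigenvector of $L_{u_0}$ with eigenvalue $\lambda=-1/(2\mathrm{Im}\,p)$, hence
\[
I_+\bigl[(X^*-(n+1)tL_{u_0}^n-z)^{-1}e^{-\ii tL_{u_0}^{n+1}}\Pi u_0\bigr]=2\ii\pi\,e^{-\ii t\lambda^{n+1}}\,\Pi u(t,z).
\]

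The idea you are missing is that the functional $I_+$, i.e.\ $\chi_\varepsilon$, must be transported by the same unitary as $\Pi u$. From $B^{[\lambda]}_u\chi_\varepsilon=\ii w_\lambda+o(1)$ and $L_u\chi_\varepsilon=-\Pi u+o(1)$ one gets $B^{(n)}_u\chi_\varepsilon=-\ii L_u^{n+1}\chi_\varepsilon+o(1)$. Hence $U^*\chi_\varepsilon=e^{\ii tL_{u_0}^{n+1}}\chi_\varepsilon+o(1)$, and $U^*\Pi u(t)=-L_{u_0}U^*\chi_\varepsilon+o(1)=e^{\ii tL_{u_0}^{n+1}}\Pi u_0$, which is \eqref{eq:Uchin}. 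All three objects then carry the same unitary $e^{\ii tL_{u_0}^{n+1}}$, which commutes with $L_{u_0}^n$. It therefore cancels exactly in $\langle (U^*X^*U-z)^{-1}U^*\Pi u\mid U^*\chi_\varepsilon\rangle$.

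\textbf{Step 2: the identity must be exact.} It cannot be ``$(n+1)L_u^n$ plus terms vanishing on $\Pi u$ or killed by $I_+$''. Any remainder enters $\frac{d}{dt}U^*X^*U$ and therefore sits inside the resolvent, where it does not drop out. The paper proves (Proposition~\ref{Prop1n}) that $[X^*,B^{(n)}_u]=-(n+1)L_u^n-\ii[X^*,L_u^{n+1}]$, that is, $[X^*,B^{(n)}_u+\ii L_u^{n+1}]=-(n+1)L_u^n$ with no remainder at all.

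The mechanism is as follows. From $[X^*,T_b]=\frac{\ii}{2\pi}I_+(\cdot)\,\Pi b$ and $I_+(T_{\overline{w}}f)=\langle f\mid w\rangle$ one gets the rank-one formula
\[
[X^*,B^{[\lambda]}_u]f=-\frac{1}{2\pi}\bigl(\langle f\mid w_\lambda\rangle+I_+(f)\bigr)w_\lambda .
\]
Expanding in $\lambda^{-1}$ gives a finite-rank expression for $[X^*,B^{(n)}_u]$. It matches term by term the finite-rank part of the Leibniz expansion of $-\ii[X^*,L_u^{n+1}]$, once one uses $I_+(L_ug)=-\langle g\mid \Pi u\rangle$.

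Note also the sign: $[X^*,L_u]f=\ii f-\frac{\ii}{2\pi}I_+(f)\,\Pi u$. Finally, it is simpler to take the bounded $B^{(n)}_u$ as generator and pass to $V=Ue^{\ii tL_{u_0}^{n+1}}$ afterwards. That avoids solving $U'=(\ii L_u^{n+1}+C^{(n)}_u)U$, whose generator is unbounded and time-dependent.
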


\begin{remark}\label{remark1}
For $n=1$, we recover exactly the explicit formula for the standard \eqref{BO} equation derived in \cite{gerard2023explicit}, 
\begin{equation*}
	\Pi u (t,z)=\frac{1}{2 \ii \pi} I_+\left[\left(X^* - 2 t L_{u_0} -z\mathrm{Id} \right)^{-1}\Pi u_0\right].
\end{equation*}
\end{remark}

\begin{remark}
Recently, Killip, Laurens and Visan \cite{killip2024sharp} provided an analogous explicit formula for higher flows in the BO hierarchy. While their formulation (see \cite[Theorem 6.1]{killip2024sharp}) is based on the generating function, our formula \eqref{BOhierexpfor} provides a direct representation for each individual flow in terms of powers of the Lax operator $L_{u_0}$.
\end{remark}

\begin{remark}\label{remarkchen}
Chen \cite{chen2025explicit} extended the first author's explicit formula of the solution to \eqref{BO} to the initial data $u_0 \in L^2_r (\R)$.  A similar extension for higher flows would require a further study of the operators $X^* - (n+1)tL_{u_0}^n$; we do not pursue this direction in the present paper.
\end{remark}

\subsection{Classification of traveling waves}\label{subsecclasstw}
The second purpose of this paper is to classify traveling-wave solutions of the $(n+1)$-th flow of the BO hierarchy on the line. A traveling wave is a solution of the form
$$
u(t,x)=u_0(x-ct),
$$
for some $u_0\in L^2 (\R)$ and some velocity $c\in\R$. 	For the Korteweg--de Vries equation, the classification of decaying traveling waves is classical. Indeed, inserting the ansatz $u(t,x)=Q(x-ct)$ reduces the equation to an ODE, whose solutions are exactly the well-known solitary waves
$ Q(x)=\frac{c}{2}\,\operatorname{sech}^2\! (\frac{\sqrt c}{2}(x-x_0)).$
For the \eqref{BO} equation, the situation is delicate because the dispersion is nonlocal.
The existence of explicit \eqref{BO} solitary waves goes back to Benjamin, Davis--Acrivos, and Ono \cite{benjamin1967internal,davis1967solitary,ono1975algebraic}, while the complete classification of \eqref{BO} traveling waves was proved by Amick and Toland \cite{AmickToland1991}; see also the first author's recent proof using Lax-pair in \cite[Theorem~5.4]{gerard2026lectures}.
More precisely, every nontrivial traveling wave of \eqref{BO} is a one-soliton of the form
\begin{equation}\label{eq:solitonBO}
	u(t,x)=R_p(x-c_pt),\quad R_p(y):=\frac{2\mathrm{Im}p}{|y+p|^2},\quad c_p:=\frac{1}{\mathrm{Im}p}, \quad p\in \C_+.	
\end{equation}

To the best of our knowledge, the classification of traveling waves for the BO hierarchy on the line has not been investigated before. We also refer to the work of Tzvetkov and Visciglia \cite{tzvetkov2013gaussian}, in which Gaussian measures associated with the higher-order conservation laws of the \eqref{BO} equation are constructed in the periodic setting. In contrast with their probabilistic perspective, our second theorem yields a deterministic counterpart on the line. Namely, we prove that the phenomenon exhibited in \eqref{eq:solitonBO} remains valid for the higher flows: every nontrivial traveling wave of the $(n+1)$-th flow is still a one-soliton with the same profile $R_p$, and only the velocity changes according to the flow. 

Our main theorem reads as follows.

\begin{theorem}[Classification of traveling waves for the BO hierarchy]\label{thm:classification}
Let $n \in \mathbb{N}$ and $u_0 \in H^s_r(\mathbb{R})$ with $s > n + \frac{1}{2}$. If $u$ is a traveling wave, namely if there exists $c\in\R$ such that
	$$
	u(t,x)=u_0(x-ct),
	\qquad (t,x)\in\R^2,
	$$
then either $u_0\equiv 0,$ or there exists a unique parameter $p\in\C_+$ such that
	$$
	u(t,x)=R_p(x-c_{n,p}t),
	\qquad
	R_p(y):=\frac{2\mathrm{Im} p}{|y+p|^2},\qquad
	c_{n,p}:=(-1)^{n+1}\frac{n+1}{(2\mathrm{Im} p)^n}.
	$$
\end{theorem}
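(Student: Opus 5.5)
Following the Lax-pair proof of \cite[Theorem~5.4]{gerard2026lectures}, we plan to combine the explicit formula \eqref{BOhierexpfor} of Theorem~\ref{mainBOhier} with the translation invariance of the traveling wave. Write $u(t)=\tau_{ct}u_0$ with $\tau_a f:=f(\cdot-a)$. Translation acts on $L^2_+$ as the unitary $e^{-\ii aD}$, and $\Pi u(t,z)=\Pi u_0(z-ct)$. From $[X^*,D]=\ii\,\Id$ (on the Fourier side $X^*$ is $\ii\,d/d\xi$ and $D$ is multiplication by $\xi$), we get $e^{\ii aD}X^*e^{-\ii aD}=X^*+a\,\Id$. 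Since $u_0$ is real, $L_{u(t)}=e^{-\ii ctD}L_{u_0}e^{\ii ctD}$. Moreover $I_+(e^{-\ii aD}f)=I_+(f)$, because $\hat f(0^+)$ is unchanged by the modulation $e^{-\ii a\xi}$.

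\emph{Step 1: an operator identity.} Using the explicit formula, rewrite $\Pi u_0(z-ct)$ as $\frac{1}{2\ii\pi}I_+[(X^*+ct-z)^{-1}\Pi u_0]$. Compare it with $\frac{1}{2\ii\pi}I_+[(X^*-(n+1)tL_{u_0}^n-z)^{-1}\Pi u_0]$. Both sides are holomorphic in $z\in\C_+$, so differentiating in $t$ at $t=0$ and expanding the resolvent gives, for all $z\in\C_+$,
\[
I_+\big[(X^*-z)^{-1}\big((n+1)L_{u_0}^n+c\big)(X^*-z)^{-1}\Pi u_0\big]=0 .
\]
As in the BO case, we use the identity $I_+[(X^*-z)^{-1}f]=2\ii\pi f(z)$ for suitable $f$ (this is the $t=0$ case of the formula). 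It turns the display into the statement that the holomorphic function $\big((n+1)L_{u_0}^n+c\big)(X^*-z)^{-1}\Pi u_0$, viewed through $I_+$ and the adjoint $(X^*-z)^{-1}$, vanishes. Alternatively, we can argue directly at the level of the flow: the traveling-wave equation says that $u_0$ is a critical point of $E_n+\frac{c}{2}\|u\|_{L^2}^2$. The Lax structure of the hierarchy gives $\partial_t L_u=[B_n,L_u]$ together with a commutator identity of the form $[X^*,L_u]\,\Pi u=\ldots$. Either route should yield the key relation
\[
\big((n+1)L_{u_0}^n+c\big)\Pi u_0\in\operatorname{span}\{\ldots\},
\]
and more precisely that $\Pi u_0$ lies in $\ker\big((n+1)L_{u_0}^n+c\big)$ modulo the boundary term coming from $I_+$.

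\emph{Step 2: finite rank and spectral reduction.} The aim is to show that $\Pi u_0$ is a finite combination of eigenvectors of $L_{u_0}$ with eigenvalues $\lambda$ satisfying $(n+1)\lambda^n=-c$. A standard fact is that eigenfunctions of $L_u$ for $u\in L^2\cap L^\infty$ are rational, of the form $\frac{1}{x+p}$-type elements, and that eigenvalues are negative. Since $\lambda<0$, the equation $(n+1)\lambda^n=-c$ has at most one negative root, so only one eigenvalue $\lambda$ occurs. The known structure of the $\lambda$-eigenspace (of dimension one, with $I_+(\varphi)\neq0$ and $\langle u_0,\varphi\rangle$ related) then forces $u_0$ to be a one-soliton $R_p$ with $\lambda=-\frac{1}{2\mathrm{Im}p}$. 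The velocity then follows from $c=-(n+1)\lambda^n=(-1)^{n+1}\frac{n+1}{(2\mathrm{Im}p)^n}$, and uniqueness of $p$ comes from the fact that $R_p$ determines $p$ through its pole.

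\emph{Expected obstacle.} The main difficulty is Step 1: turning the equality of the explicit formulas for all $t$ into a genuine spectral statement $\big((n+1)L_{u_0}^n+c\big)\Pi u_0\in\C\,\cdot(\text{boundary vector})$, and in particular handling domain issues for $L_{u_0}^n$, for which $s>n+\frac12$ should suffice. The first thing to try is the BO argument: test against $\chi_\varepsilon$, use $[X^*,L_u]f=\ii f+\frac{I_+(f)}{2\pi}\Pi u$-type identities iterated $n$ times, and show that the resulting rank-one perturbation forces $\Pi u_0\in\operatorname{Ran}\mathbf 1_{\{\lambda\}}(L_{u_0})$.
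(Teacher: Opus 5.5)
There is a genuine gap, and it is in Step 1, which carries the whole proof. The surrounding bookkeeping is fine: the translation conjugations, $c=-(n+1)\lambda^n$, and the uniqueness of the negative root.

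\emph{Step 1 only recovers the profile equation.} Both sides of your identity equal $\Pi u(t,z)$. So the $t$-derivative of the explicit formula at $t=0$ is $\Pi\partial_x\nabla E_n(u_0)(z)$, and your display is $\Pi\partial_x\bigl(\nabla E_n(u_0)+cu_0\bigr)=0$ in disguise. Using $I_+[(X^*-z)^{-1}F]=2\ii\pi F(z)$, it reads $\bigl[A_c(X^*-z)^{-1}\Pi u_0\bigr](z)=0$ with $A_c:=(n+1)L_{u_0}^n+c$. Here the evaluation point is the resolvent parameter itself. This is a nonlocal ODE for $u_0$, not the vanishing of a fixed vector, and classifying its solutions is the whole difficulty (Amick--Toland for $n=1$). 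Neither of your routes turns it into the ``key relation'', which you leave as $\operatorname{span}\{\ldots\}$ ``modulo a boundary term''. Your Step 2 would need the exact inclusion $\Pi u_0\in\ker A_c$, and you give no mechanism for it.

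\emph{Step 2 does not compensate.} Eigenfunctions of $L_u$ are not rational in general for $u\in L^2\cap L^\infty$. Also, a simple negative eigenvalue does not by itself give $u_0=R_p$. Even granting $\Pi u_0=\alpha\phi$ with $L_{u_0}\phi=\lambda\phi$, you would still need an argument. One option: apply $[X^*,L_{u_0}]\phi=\ii\phi-\frac{\ii}{2\pi}I_+(\phi)\Pi u_0$ to get $X^*\phi\in\C\phi$, so $\phi$ has a single pole.

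The missing idea, which the paper uses, is to exploit the identity for all $t$ and let $t\to\infty$ in the moving frame. One has $\Pi u_0(z)=\Omega_t^{(n)}\Pi u_0(z+ct)$, where $\Omega_t^{(n)}f(w)=\frac{1}{2\ii\pi}I_+\bigl[(X^*-(n+1)tL_{u_0}^n-w)^{-1}f\bigr]$. This operator satisfies $\|\Omega_t^{(n)}f\|_{L^2}\le\|f\|_{L^2}$ and $|\Omega_t^{(n)}f(w)|\le\|f\|_{L^2}/(2\sqrt{\pi\,\mathrm{Im}\,w})$. For $g\in\mathrm{Dom}(X^*)\cap\mathrm{Dom}(A_c)$, the identity $A_cg=-\frac1t(X^*-tA_c-z)g+\frac1t(X^*-z)g$ gives $\Omega_t^{(n)}(A_cg)(z+ct)=O(1/t)$. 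A density argument, using that resolvents of $L_{u_0}$ preserve $\mathrm{Dom}(X^*)$, extends the decay to every $f\perp\ker A_c$. Since the left side is independent of $t$, $u_0\equiv0$ unless $\Pi u_0$ has a nonzero component in $\ker A_c$. For $c=0$ this is excluded by checking directly that $\Pi u_0\perp\ker L_{u_0}$. Otherwise $c=-(n+1)\lambda^n$ for a simple negative eigenvalue $\lambda$ with eigenfunction $\phi$. The $\phi$-component then converges, by Wu's identities, to $\frac{\langle\Pi u_0,\phi\rangle I_+(\phi)}{2\ii\pi(\langle X^*\phi,\phi\rangle-z)}=\frac{\ii}{z-\langle X^*\phi,\phi\rangle}$. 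This yields $R_p$ with $p=-\langle X^*\phi,\phi\rangle$ and $\lambda=-1/(2\mathrm{Im}\,p)$, without ever proving that $\Pi u_0$ is an eigenvector.
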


\begin{remark}\label{rqBO3station}
In the case $n = 2$, let $u$ be a traveling wave of the form
$u(t,x)=Q(x- c_{2,p} t)$. Substituting $\partial_t u = -c_{2,p} \partial_x u$ with $c_{2,p} = -\frac{3}{4(\mathrm{Im}\, p)^2}$, into the third-order equation of the BO hierarchy \eqref{BO3}, we obtain the following stationary equation
\begin{equation}\label{BO3stat}
\partial_x^2 Q - c_{2,p} Q + \frac{3}{2} Q |D|Q + \frac{3}{2} \operatorname{H}(Q \partial_x Q) - Q^3 = 0.
\end{equation}
Theorem~\ref{thm:classification} then implies that any nontrivial solution of \eqref{BO3stat} in $H^s_r(\mathbb{R})$ must be of the form $R_p$.
\end{remark}

\begin{remark}\label{rqlouise}
The situation on the torus is different from that on the line. More precisely, in the periodic setting, Gassot \cite{gassot2021third} proved that the third-order \eqref{BO} equation admits more traveling waves in $L_{r,0}^2(\T)$ than the classical \eqref{BO} equation. By contrast, on the line, Theorem~\ref{thm:classification} shows that every nontrivial traveling wave remains a one-soliton.
\end{remark}

\begin{remark}\label{rqdiff}
Although the higher flows in the BO hierarchy are more complicated than the classical BO flow, the proof of Theorem~\ref{thm:classification} given in Section~\ref{sectiontw} is both different from and simpler than the previously known approaches to the standard BO equation, such as the analysis of Amick and Toland \cite{AmickToland1991} and the first author's Lax-pair proof in \cite[Theorem~5.4]{gerard2026lectures}. Instead, our argument is based on the explicit formula for the BO hierarchy and the soliton limit, which make the classification direct.
\end{remark}

\subsection{Zero dispersion limit}
Our third objective is to identify the zero dispersion limit of solutions of the BO hierarchy on the line. Before stating our main results, we first review the background of the zero dispersion problem for the standard \eqref{BO} equation. 

The \eqref{BO} equation with a small dispersion parameter $\varepsilon > 0$ is given by
\begin{equation}\label{BOeps}
	\partial_t u^\varepsilon- \varepsilon \partial_x |D| u^\varepsilon + \partial_x ((u^\varepsilon)^2)= 0, \quad  u^\varepsilon(0, x) = u_0(x).
	\tag{BO-$\varepsilon$}
\end{equation}
The study of the zero-dispersion limit explores the complex interplay between nonlinearity and dispersion as the dispersion parameter $\varepsilon$ tends to zero. 
The rigorous mathematical theory of zero-dispersion limits was pioneered by Lax and Levermore \cite{lax1983small} in their seminal work on the Korteweg--de Vries (KdV) equation. Using the scattering transform method, they demonstrated that, for special classes of
initial data under strong assumptions,  the zero-dispersion limit of the KdV equation exists in the weak $L^2$ sense and can be characterized as the solution of a variational problem. This framework was subsequently extended to higher-order by Venakides \cite{venakides1991korteweg}. Building upon the Lax-Levermore strategy, similar analyses were established for nonlinear Schrödinger equation (NLS) and modified Korteweg-de Vries (mKdV) equation \cite{jin1999semiclassical, ercolani2003zero, kamvissis2003semiclassical}.

For the \eqref{BOeps} equation, the first rigorous analysis was obtained by Miller and Xu \cite{miller2011zero}, who developed an appropriate analogue of the Lax--Levermore theory for bell-shaped initial data on the line. Unlike the KdV case, their formula for the BO weak limit is explicit. Specifically, by the method of characteristics, they solved the initial-value problem for the inviscid Burgers equation 
$u^\mathrm{B}_t+ \partial_{x} (u^\mathrm{B})^2 =0, u^\mathrm{B}(0,x)=u_0(x),$
where the solutions possesses an odd number of branches, which we denote by $u^B_0(t,x)<u^B_1(t,x)<\dots<u^B_{2J}(t,x)$.
Then, for initial data with bell-shaped
profiles, the solution of the \eqref{BOeps} equation converges weakly in $L^2(\mathbb{R})$ to a limit $ZD [u_0](t, x)$ defined explicitly by the following alternating sum,
\begin{equation}\label{BOlimit}
ZD[u_0](t, x):=\sum_{n=0}^{2J(t,x)}(-1)^n u_n^\mathrm{B}(t,x)
\end{equation}
This alternating sum formula was subsequently established for periodic bell-shaped data on the torus  by Gassot in \cite{gassot2023zero, gassot2023lax}.

Recently, unlike the Lax--Levermore theory, which requires strong assumptions on the initial data,
the discovery \cite{gerard2023explicit} of an \emph{explicit formula} for \eqref{BO} equation has led to more progress in the study of the zero dispersion limit. Using this tool, the first author gave a more efficient proof to generalize the results of Miller and Xu to all initial data in $L^2(\mathbb{R}) \cap L^\infty(\mathbb{R})$ on the line\cite{Gerard2025small}. This has been further extended in several directions. 
As mentioned in Remark \ref{remarkchen}, Chen generalized the explicit formula for \eqref{BO} with $u_0 \in L^2_r (\R)$, which allowed to obtain a weak convergence result at lower regularity \cite{chen2025explicit}. An analogous strategy was implemented by M{\ae}hlen \cite{maehlen2025zero} for the periodic case, generalizing the work of Gassot \cite{gassot2023zero} to all bounded initial data. Furthermore, Blackstone et al. \cite{blackstone2024benjamin} utilized the explicit formula to establish the leading-order asymptotics in $L^2$ strong convergence for certain rational initial data. Beyond the \eqref{BO} equation itself,  the Calogero–Moser derivative nonlinear Schrödinger equation
\cite{gerard2024calogero} admits an integrable structure and an explicit
formula close to that of the \eqref{BO}, Badreddine thus establishes a weak convergence result in the zero-dispersion limit for that equation using a similar approach \cite{badreddine2024zero}.

While the zero-dispersion limit of the standard \eqref{BO} equation has been extensively  studied, the corresponding problem for the higher-order flows in the BO hierarchy remains an open problem. It is natural to ask whether the alternating sum \eqref{BOlimit} extends to the whole hierarchy. To the best of our knowledge,	the only existing study is the work of Miller and Xu \cite{miller2012zeroBOhier}, which utilized inverse-scattering and a Lax--Levermore-type approach for admissible positive initial data. Our approach is of a different nature. Starting from the explicit formula established in Theorem~\ref{mainBOhier}, we treat each fixed 
$n+1$-st flow directly. In particular, we do not use scattering data and do not impose special shape assumptions on the initial profile, under the natural Sobolev regularity required to define the flow, we prove that 
$u^\varepsilon (t)$ converges weakly in $L^2 (\R)$ for every fixed $t$, and we identify the limit explicitly. 

For $n\in \mathbb{N}$, the corresponding small-dispersion $(n+1)$-st flow reads
\begin{align}\label{eq:BOneps}
	\partial_t u^\varepsilon = \partial_x (\nabla E_n^\varepsilon (u^\varepsilon)),  \qquad
	u^\varepsilon(0, x) = u_0(x),
\end{align}
where
$$
E_n^\varepsilon(u^\varepsilon) := \langle (\varepsilon D - T_{u^\varepsilon})^n \Pi u^\varepsilon, \Pi u^\varepsilon \rangle_{L^2}.$$
The conservation of the $L^2$ norm ensures that $\{u^\varepsilon\}_{\varepsilon > 0}$ is uniformly bounded in $L^2(\mathbb{R})$. By weak compactness, there exists a subsequence $\varepsilon_j \to 0$ such that $u^{\varepsilon_j}$ converges weakly to a limit. 
The following theorem provides 
a precise geometric characterization of this limit, which extends the alternating sum formula \eqref{BOlimit} to the entire hierarchy and a wide class of initial data.

\begin{theorem}\label{thm:zerolimit}
Let $n \in \N$ and $u_0 \in H^s_r(\mathbb{R})$ with $s > n + \frac{1}{2}$. For every $t \in \mathbb{R}$, the solution $u^\varepsilon(t)$ of \eqref{eq:BOneps} converges weakly in $L^2(\mathbb{R})$ to a function $ZD[u_0](t, \cdot)$, characterized as follows. Let $K_t(u_0)$ denote the set of critical values of the function
		$$
		y \in \mathbb{R} \mapsto y - (-1)^{n} (n+1) t u_0^n(y).
		$$
The set $K_t(u_0)$ is a compact subset of measure $0$. For every connected component $\Omega$ of $K_t(u_0)^c$, there exists a nonnegative integer $\ell $ such that, for every $x\in \Omega$, the equation
		$$
		y - (-1)^{n} (n+1) t u_0^n(y) = x
		$$
has $2\ell +1$ simple real solutions 
		$$
		y_0(t, x ) < y_1(t, x ) < \cdots < y_{2\ell }(t, x),
		$$
and the zero dispersion limit is given by 
\begin{align}\label{eq:zerolimitgeo}
ZD [u_0](t, x) = \sum_{k = 0}^{2 \ell} (-1)^k u_0 (y_k (t, x)).
\end{align}
\end{theorem}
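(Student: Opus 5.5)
We follow the strategy of \cite{Gerard2025small}, built on the explicit formula of Theorem~\ref{mainBOhier}.

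\textbf{Step 1: explicit formula for the $\varepsilon$-flow.} By scaling, Theorem~\ref{mainBOhier} applied to \eqref{eq:BOneps} gives, for $z\in\C_+$,
$$
\Pi u^\varepsilon(t,z)=\frac{1}{2\ii\pi}\, I_+\Big[\big(X^*-(n+1)t\,(\varepsilon D-T_{u_0})^n-z\big)^{-1}\Pi u_0\Big].
$$
The powers of $\varepsilon$ must be tracked carefully in this rescaling. Since $\|u^\varepsilon(t)\|_{L^2}=\|u_0\|_{L^2}$, weak convergence of $u^\varepsilon(t)=2\mathrm{Re}\,\Pi u^\varepsilon(t)$ is equivalent to pointwise convergence of $\Pi u^\varepsilon(t,z)$ for every $z\in\C_+$. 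This holds because the functions $(x-\bar z)^{-1}$ span a dense subspace of $L^2_+$.

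\textbf{Step 2: semiclassical limit of the resolvent.} As $\varepsilon\to0$, we expect $(\varepsilon D-T_{u_0})^n\to (-1)^nT_{u_0}^n$ strongly on $L^2_+$. For the power this requires $\varepsilon D T_{u_0}^k f\to 0$ for $f$ in a dense set such as $H^1_+$, using $u_0\in W^{1,\infty}$, which follows from $s>n+\tfrac12$. We also want $T_{u_0}^n$ replaced by $T_{u_0^n}$ modulo an operator whose contribution vanishes in the formula. Commutators $[T_a,T_b]$ are compact (Hankel products), but they are not small. So we first pass to the strong limit of the resolvents
$$
\big(X^*+(-1)^{n+1}(n+1)t\,(\varepsilon D-T_{u_0})^n-z\big)^{-1}
\;\xrightarrow[s]{}\;
\big(X^*+(-1)^{n+1}(n+1)tT_{u_0}^n-z\big)^{-1}.
$$
Uniform boundedness comes from the fact that $\mathrm{Im}$ of the operator is controlled by $-\mathrm{Im}\,X^*\ge0$ and $\mathrm{Im} z>0$, since $L_{u_0}$ is self-adjoint. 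The strong limit then follows by the standard resolvent convergence argument on a common core. The operator $I_+$ is not continuous on $L^2_+$. As in \cite{Gerard2025small}, we rewrite
$$
I_+\big[(X^*-A-z)^{-1}f\big]=\lim_{\delta\to0}\langle (X^*-A-z)^{-1}f,\chi_\delta\rangle ,
$$
and we use the identity $[X^*,T_b]$ = rank-one operator involving $I_+$ to move $X^*$ onto $\chi_\delta$. This yields a formula of the form $\langle (\ldots)^{-1}\Pi u_0, \Pi(\ldots)\rangle$ that is continuous under strong convergence.

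\textbf{Step 3: identification of the limit.} This is the main obstacle. The limit operator $X^*+(-1)^{n+1}(n+1)tT_{u_0}^n$ is compared with $X^*+(-1)^{n+1}(n+1)tT_{u_0^n}$, the two differing by a compact operator. We then compute the resulting $I_+$ quantity using the fact that $X^*$ acts as multiplication by $x$ on the Fourier side of the Hardy space. Concretely, $\Pi(\text{limit})(z)$ should equal
$$
\frac{1}{2\ii\pi}\int_\R \frac{u_0(y)\,(1-(-1)^n(n+1)t\,\partial_y(u_0^n)/n\ldots)}{\;y-(-1)^n(n+1)tu_0^n(y)-z}\,dy
$$
up to a logarithmic form. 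We would compute it as
$$
\frac{1}{2\ii\pi}\int_\R \log\!\big(\phi_t(y)-z\big)\,u_0'(y)\,dy \quad\text{(up to sign)},\qquad \phi_t(y)=y-(-1)^n(n+1)tu_0^n(y).
$$
This is obtained by solving the transport-type equation $(X^*-\ldots)g=\Pi u_0$ via characteristics, exactly as in the $n=1$ case with $u_0$ replaced by $u_0^n$ in the characteristic map. Taking boundary values $z\to x\in\R$ and counting preimages then gives the alternating sum \eqref{eq:zerolimitgeo}. Indeed, $\mathrm{Im}\log(\phi_t(y)-x-\ii0)$ jumps by $\pi$ at each simple root $y_k$, and integrating $u_0'$ against this step function produces $\sum_k(-1)^ku_0(y_k)$, the number of roots being odd because $\phi_t(y)\sim y$ at $\pm\infty$.

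\textbf{Step 4: the critical set.} Since $u_0\in C^1$ and decays, $\phi_t$ is $C^1$ and $\phi_t(y)-y\to0$ at $\pm\infty$. Hence its critical values form a compact set, which has measure zero by Sard's theorem for $C^1$ maps $\R\to\R$. Off this set the roots are simple and locally constant in number, which justifies the computation pointwise almost everywhere.

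I expect the hardest part to be justifying the replacement of $T_{u_0}^n$ by the "symbol" $u_0^n$ inside the $I_+$ functional in Step 3. If a direct compact-perturbation argument fails, I would instead prove the formula first for rational or Schwartz $u_0$ by a direct characteristic computation, and conclude by density using the uniform $L^2$ bound.
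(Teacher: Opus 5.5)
Steps 1, 2 and 4 are fine in outline. Step 2 reaches the paper's intermediate formula $\Pi ZD[u_0](t,z)=\frac{1}{2\ii\pi}I_+\bigl[(X^*-\alpha tT_{u_0}^n-z)^{-1}\Pi u_0\bigr]$, with $\alpha:=(-1)^n(n+1)$, by a strong-resolvent argument, though your handling of $I_+$ is only sketched. The paper instead integrates the Fourier-side equation for $\widehat h_\varepsilon$ against a cutoff $\chi$ with $\chi(0)=1$.

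The genuine gap is Step 3, which carries the whole content of the theorem, and the mechanism you propose there fails. Replacing $T_{u_0}^n$ by $T_{u_0^n}$ is not harmless, because compact perturbations do change $I_+$ of the resolvent. For bounded $A$ and small $t$,
\[
\frac{1}{2\ii\pi}I_+\bigl[(X^*-\alpha tA-z)^{-1}\Pi u_0\bigr]=\Pi u_0(z)+\alpha t\,(Ag_z)(z)+O(t^2),\qquad g_z:=(X^*-z)^{-1}\Pi u_0=\frac{\Pi u_0(y)-\Pi u_0(z)}{y-z}.
\]
Take $n=2$ and $u_0=\frac{c}{y-p}+\frac{\bar c}{y-\bar p}$. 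Then $(T_{u_0}^2-T_{u_0^2})g_z=-\Pi\bigl(u_0(\Id-\Pi)(u_0g_z)\bigr)=-\frac{|c|^2g_z(p)}{(\bar p-p)(y-\bar p)}$, with $g_z(p)=-\frac{\bar c}{(p-\bar p)(z-\bar p)}\neq0$. So the two operators already give different limits at order $t$, and only $T_{u_0}^n$ is the correct one. There is also no characteristic method for $(X^*-\alpha tT_{u_0}^n-z)g=\Pi u_0$. The operator is not a multiplication operator, and its Hankel-type corrections introduce unknown constants; determining them is precisely the problem.

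What is missing is the paper's algebra for rational $u_0$ with simple poles.

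\begin{enumerate}
\item The formula $T_{u_0}^nf=u_0^nf-\sum_{k=0}^{n-1}u_0^{n-1-k}\sum_j\frac{c_j(T_{u_0}^kf)(p_j)}{y-p_j}$ gives $g=\bigl(u_0-\lambda-\sum_k u_0^{n-1-k}\sum_j\frac{\mu_{j,k}}{y-p_j}\bigr)/(\phi_t-z)$. There are $nN+1$ unknowns: $\lambda=\frac{1}{2\ii\pi}I_+(g)$ and the $\mu_{j,k}$.
\item The numerator must vanish at the $nN+1$ roots $y_{2k}\in\C_+$ of $\phi_t=z$.
\item Write $y_{2k}-p_j=\alpha tv_k^n+z-p_j$ with $v_k=u_0(y_{2k})$, and multiply by $Q(v)=\prod_j(\alpha tv^n+z-p_j)$. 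The conditions then say that $vQ(v)-\lambda Q(v)$ agrees at $v_0,\dots,v_{nN}$ with a polynomial of degree $\le nN-1$.
\item For $n\ge2$, $vQ$ has no $v^{nN}$ term, so comparing coefficients gives $\lambda=\sum_k u_0(y_{2k})$. This is the Cramer/Vandermonde step.
\item The identity $\sum_{\text{all roots}}u_0=0$ converts this into the alternating sum.
\end{enumerate}

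Your target $-\frac{1}{2\ii\pi}\int\log(\phi_t(y)-z)\,u_0'(y)\,dy$ is in fact consistent with this for $n\ge2$. By residues, its $z$-derivative is $\sum_k u_0'(y_{2k})/\phi_t'(y_{2k})=\frac{d}{dz}\sum_k u_0(y_{2k})$, since $1/(\phi_t-z)$ vanishes to order $n\ge2$ at each $p_j$. Your jump-counting would then be a valid substitute for the appendix lemma. But you give no derivation of this formula from the operator formula.

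Your fallback (rational data, then density) has the right shape, but the rational computation is exactly what is absent. The density step also needs more than the uniform $L^2$ bound. It needs stability of $u_0\mapsto ZD[u_0](t)$ under strong $L^2$ convergence with a uniform $L^\infty$ bound, together with $C^1_{\mathrm{loc}}$ stability of the simple real roots.
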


\begin{remark}
The Lax–Levermore theory employed in \cite{miller2011zero} requires specific  constraints on the initial data. In contrast, based on the approach in \cite{Gerard2025small}, 
by approximating $u_0$ with rational functions possessing simple poles, we transform the formula into a manageable linear system.  This approach allows us to establish the signed sum \eqref{eq:zerolimitgeo} without imposing any essential constraints on the initial data profile, beyond the required regularity.
\end{remark}

\begin{remark}\label{remark-KdV}
The comparison with the KdV zero-dispersion problem and with the classification of traveling waves is instructive, and reflects a deeper difference between the two integrable structures.
On the traveling-wave side, the KdV classification follows from a direct ODE analysis, whereas for BO the nonlocal dispersion makes the problem subtler.
On the zero-dispersion side, as emphasized in \cite{Gerard2025small}, even the existence of a weak limit for every time and every initial datum in $L^\infty(\R)\cap L^2(\R)$ is still open for KdV. By contrast, the weak limit formula \eqref{eq:zerolimitgeo} obtained in this paper for the BO hierarchy is simple and more explicit than the corresponding formula found by Lax--Levermore for the KdV case. 
The existence of similar approach for KdV is an interesting open problem, and the methods developed here may provide further insights into it.
\end{remark}

\begin{remark}
Compared with the previous paper \cite{Gerard2025small} and lecture notes \cite{gerard2026lectures} of the first author, our proof of the weak convergence in Sections \ref{seczeroBo3} and \ref{seczeroBon} works directly with the resolvent equations (see Lemmas \ref{lemma:perturbedbo3} and \ref{lemma:perturbedbo_n}) and avoids treating the corresponding unbounded operators.
\end{remark}

\begin{remark}
While the statement of Theorem \ref{thm:zerolimit} covers the entire hierarchy $n \in \mathbb{N}$, our proof in Section \ref{seczeroBon} focuses on the case $n \geq 2$, which exhibits a significant algebraic shift from the $n=1$ case treated in \cite{Gerard2025small}. Specifically, the higher-order flows of the hierarchy ($n \geq 2$) necessitate a more general Vandermonde reduction, as developed in Proposition \ref{prop:ND_N}; in contrast, the standard \eqref{BO} case ($n=1$) collapses into a Cauchy-type determinant structure (see Remark \ref{remarkvander} for further technical details).
\end{remark}

\subsection{Structure of the paper}
This paper is structured as follows. In Section \ref{sectionlax}, we review the Lax pair structure of the Benjamin–Ono hierarchy. Section \ref{secproof} is devoted to the derivation of the general explicit formulas for solutions of the entire hierarchy. In Section \ref{sectiontw}, we provide a complete classification of traveling wave solutions. The study of the zero-dispersion limit begins in Section \ref{seczeroBo3} for the third-order case, which is subsequently generalized to all higher-order flows in Section \ref{seczeroBon}. Finally, the Appendix contains an algebraic lemma used in Sections~\ref{seczeroBo3}--\ref{seczeroBon}, which may be of independent interest in other contexts.

\section{The Lax operator}\label{sectionlax}
In this section, we present the Lax pair for the BO hierarchy, which will be used throughout the paper. Let $\sigma (L_u)$ be the spectrum of $L_u$.
For $\lambda \in \C \setminus \sigma (-L_u)$, we introduce the generating functional $\mathcal{H}_\lambda(u)$ (see \cite[Def. 2.14]{sun2021complete})
\begin{equation*}
	\mathcal{H}_\lambda(u) = \langle (L_u + \lambda)^{-1} \Pi u, \Pi u \rangle_{L^2}.
\end{equation*}
The $L^2$-gradient of this functional is given by $ \nabla \mathcal{H}_\lambda (u) = w_\lambda + \overline{w}_\lambda + |w_\lambda|^2$ where $w_\lambda := (L_{u} + \lambda)^{-1} \Pi u$, thus the BO hierarchy can be expressed by
\begin{equation}\label{BOhier}
	\partial_t u =\partial_x (\nabla \mathcal{H}_\lambda (u)) =  \partial_x \left( w_\lambda + \overline{w}_\lambda + |w_\lambda|^2 \right),
\end{equation}
where $ \partial_x (\nabla \mathcal{H}_\lambda (u))$ is the  Hamiltonian vector field of $\mathcal{H}_\lambda $ with
respect to the Gardner Poisson bracket. This bracket is defined for any two functionals $F, G : L^2(\mathbb{R}) \to \mathbb{C}$ with sufficiently regular $L^2$-gradients as:
\begin{equation*}
	\{F, G \} = \langle \partial_x \nabla F, \nabla G \rangle_{L^2}.
\end{equation*}
Under this Poisson structure, the generating function $\mathcal{H}_\lambda$ determines the time evolution of $u$ through the relation $\partial_t u = \{\mathcal{H}_\lambda, u\}$, which leads to the Hamiltonian vector field $\partial_x (\nabla \mathcal{H}_\lambda (u))$. 

The individual Hamiltonian of the BO hierarchy are recovered by expanding the $\mathcal{H}_\lambda(u)$ for spectral parameter $\lambda$,
\begin{equation}\label{eq:expanHlambda}
\mathcal{H}_\lambda(u) = \sum_{n=0}^\infty \frac{(-1)^{n} E_{n}(u)}{\lambda^{n+1}} = \frac{E_0}{\lambda} - \frac{E_1}{\lambda^2} + \frac{E_2}{\lambda^3} - \frac{E_3}{\lambda^4} + \dots,
\end{equation}
where the coefficients 
$E_n(u) := \langle L_u^{n} \Pi u, \Pi u \rangle_{L^2}$ constitute the infinite sequence of conservation laws for the hierarchy.
In particular, $E_0 (u)= \frac{1}{2}\|u\|_{L^2}^2$ denotes the momentum, and $E_1(u) =  \langle L_u \Pi u, \Pi u \rangle$ is the energy functional that generates the standard  \eqref{BO} equation, see also \eqref{eq:BOEner}. The next member, $E_2(u) = \langle L_u^2 \Pi u, \Pi u \rangle$, corresponds to the third-order BO flow. 
More generally, the $(n+1)$-th Hamiltonian flow in the hierarchy is
\begin{equation}\label{BOhiern}
	\partial_tu=\partial_x\bigl(\nabla E_n(u)\bigr).
\end{equation}

An important feature of the \eqref{BO} equation is that it admits a Lax pair (see \cite{fokas1983inverse, gerard2021integrability, wu2016simplicity}),
$$
\partial_t L_{u(t)} = \bigl[ B_{u(t)} , L_{u(t)}  \bigr],
$$
where $B_{u} := \ii(T_{|D| u} - T^2_{u}) $ is bounded and anti-selfadjoint. This structure extends to the entire BO hierarchy. Indeed, by \cite[Lemma. 2.20]{sun2021complete}, one has
$$
[D- T_{u}, T_{w_\lambda} T_{\overline{w_\lambda}} + T_{\overline{w_\lambda}} + T_{w_\lambda}] = T_{D(w_\lambda + \overline{w}_\lambda + |w_\lambda|^2)}.
$$
In other words,
\begin{equation}\label{eq:Lax_hierarchy}
\bigl[ B_u^{[\lambda]}, L_{u} \bigr] = - \ii T_{D\left( w_\lambda + \overline{w}_\lambda + |w_\lambda|^2 \right)}, \quad \text{with} \; B_u^{[\lambda]} :=  \ii \left( T_{w_\lambda} T_{\overline{w}_\lambda} + T_{\overline{w}_\lambda} + T_{w_\lambda} \right).
\end{equation}
It then follows from the definition of $L_{u(t)}$ and equation \eqref{BOhier} that
\begin{align*}
\partial_t L_{u(t)} 
= - T_{\partial_t u} 
= \bigl[ B_{u(t)}^{[\lambda]}, L_{u(t)} \bigr].
\end{align*}
We now derive the Lax pair for the individual Hamiltonian flows from the stationary identity
\eqref{eq:Lax_hierarchy}.  Let us expand $w_\lambda$,
$$
w_\lambda=(L_u+\lambda)^{-1}\Pi u
= \frac{1}{\lambda}\,\Pi u
- \frac{1}{\lambda^2}L_u\,\Pi u
+ \frac{1}{\lambda^3}L_u^2\,\Pi u
- \cdots
= \sum_{\delta=0}^\infty \frac{w_\lambda^{(\delta)}}{\lambda^{\delta+1}}, \, \text{with}\; w_\lambda^{(\delta)} = (-1)^{\delta} L_u^\delta \Pi u.$$
Substituting this series into the expression $B_u^{[\lambda]}$ in \eqref{eq:Lax_hierarchy}, we obtain
\begin{equation}\label{eq:Blambda}
B_u^{[\lambda]} = i \sum_{n=0}^\infty \frac{1}{\lambda^{n+1}} \Big[ \sum_{j=0}^{n-1} \big( T_{w_\lambda^{(j)}} T_{\overline{w_\lambda^{(n-1-j)}}} \big) + T_{w_\lambda^{(n)}} + T_{\overline{w_\lambda^{(n)}}} \Big].
\end{equation}
Here, the term involving the sum over $j$ is empty when $n=0$. We observe that $B_u^{[\lambda]}$ admits an expansion
$ B_u^{[\lambda]}
=
\sum_{n=0}^\infty \frac{(-1)^n}{\lambda^{n+1}}\,B_u^{(n)},$ with
\begin{equation}\label{eq:Bu_n_explicit}
	B_u^{(n)} = i \big( T_{L_u^n \Pi u} + T_{\overline{L_u^n \Pi u}} - \sum_{j=0}^{n-1} T_{L_u^j \Pi u} T_{\overline{L_u^{n-1-j} \Pi u}} \big).
\end{equation}
In particular, the case $n = 1$ is exactly the operator for standard \eqref{BO} equation,
$$
B_u^{(1)}=i\bigl(T_{L_u\Pi u}+T_{\overline{L_u\Pi u}}-T_{\Pi u}T_{\overline{\Pi u}}\bigr) =  i(T_{|D| u(t)} - T^2_{u(t)}),
$$
while $n=2$ corresponds to the third order equation of the BO hierarchy,
$$B_u^{(2)} = i \big( T_{L_u^2 \Pi u} + T_{\overline{L_u^2 \Pi u}} - T_{\Pi u} T_{\overline{L_u \Pi u}} - T_{L_u \Pi u} T_{\overline{\Pi u}} \big).$$

On the other hand, recall the expansion of $\mathcal{H}_\lambda$ in  \eqref{eq:expanHlambda}, 
we also have
$$
w_\lambda+\overline{w_\lambda}+|w_\lambda|^2
=
\nabla \mathcal H_\lambda(u)
=
\sum_{n=0}^\infty \frac{(-1)^n}{\lambda^{n+1}}\,\nabla E_n(u).
$$
Therefore,
$$
-i\,T_{D(|w_\lambda|^2+w_\lambda+\overline{w_\lambda})}
=
\sum_{n=0}^\infty \frac{(-1)^n}{\lambda^{n+1}}
\bigl(-i\,T_{D(\nabla E_n(u))}\bigr).
$$
Combining with the identity \eqref{eq:Lax_hierarchy}, and matching the coefficient of $\lambda^{-(n+1)}$, we obtain
$$
[B_u^{(n)},L_u]
=
-i\,T_{D(\nabla E_n(u))}.
$$
Hence, the corresponding Lax pair for the $(n+1)$-th order flow is given by
$$
\partial_tL_{u(t)}
=
-T_{\partial_tu}
=
-i\,T_{D(\nabla E_n(u))}
=
[B_{u(t)}^{(n)},L_{u(t)}].
$$

As seen from \eqref{eq:Bu_n_explicit}, the operator $B_u^{(n)}$ is a finite sum of linear and quadratic terms involving Toeplitz operators. Although these expressions become complex for larger 
$n$, their specific structure is sufficient for our analysis, and we shall not require more detailed representations in our proofs.
The next lemma gives some basic properties of $B_u^{(n)}$. 
\begin{lemma}[Boundedness of the operator $B_u^{(n)}$]\label{lemma:boundedB}
Let $n \in \mathbb{N}$ and $u \in H^s(\mathbb{R})$ with $s > n + \frac{1}{2}$. Then the operator $B_u^{(n)}$ is antiself-adjoint and bounded on $L^2_+(\mathbb{R})$.
\end{lemma}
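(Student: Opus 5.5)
The plan is to use the explicit formula \eqref{eq:Bu_n_explicit}, which writes $B_u^{(n)}$ as $\ii$ times a finite combination of Toeplitz operators and products of two Toeplitz operators. Two facts will do all the work: $\|T_b\| = \|b\|_{L^\infty}$, and $T_b^* = T_{\overline b}$ for $b \in L^\infty$. So the proof splits into a regularity step, showing every symbol appearing is bounded, and an algebraic step, checking anti-selfadjointness.

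\emph{Regularity step.} Let $u\in H^s_r(\R)$ with $s>n+\frac12$. We show by induction on $j$ that $L_u^j\Pi u\in H^{s-j}_+$ for $0\le j\le n$.
\begin{itemize}
\item For $j=0$, $\Pi u\in H^s_+$ since $\Pi$ is a Fourier multiplier.
\item For the inductive step, suppose $f:=L_u^{j}\Pi u\in H^{s-j}_+$ with $j\le n-1$. Then $Df\in H^{s-j-1}_+$.
\item Also $T_uf=\Pi(uf)$, and $uf\in H^{s-j}\subset H^{s-j-1}$, because $H^{s}\cdot H^{\sigma}\subset H^{\sigma}$ whenever $s>\frac12$ and $0\le \sigma\le s$.
\item Hence $L_u^{j+1}\Pi u\in H^{s-j-1}_+$. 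In particular $f\in \mathrm{Dom}(L_u)=H^1_+$ at each stage, since $s-j\ge s-n+1>1$.
\end{itemize}
Consequently $L_u^j\Pi u\in H^{s-n}$ with $s-n>\frac12$ for all $0\le j\le n$. By Sobolev embedding these functions lie in $L^\infty(\R)$, and so do their conjugates. Each $T_{L_u^j\Pi u}$ and $T_{\overline{L_u^j\Pi u}}$ is therefore bounded, and $B_u^{(n)}$ is bounded as a finite sum of such operators and products of two of them. The only place needing care is the Sobolev product estimate at the borderline index; the hypothesis $s>n+\frac12$ is exactly what keeps every intermediate index above $\frac12$.

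\emph{Anti-selfadjointness.} Write $a_j:=L_u^j\Pi u$. Then
$$
B_u^{(n)} = \ii\Big(T_{a_n}+T_{\overline{a_n}}-\sum_{j=0}^{n-1}T_{a_j}T_{\overline{a_{n-1-j}}}\Big).
$$
\begin{itemize}
\item The linear part $T_{a_n}+T_{\overline{a_n}}=T_{2\mathrm{Re}\,a_n}$ is selfadjoint, since $T_b^*=T_{\overline b}$.
\item For the quadratic part, $(T_{a_j}T_{\overline{a_{n-1-j}}})^*=T_{a_{n-1-j}}T_{\overline{a_j}}$. This is the term of index $n-1-j$ in the same sum.
\item The reindexing $j\mapsto n-1-j$ maps $\{0,\dots,n-1\}$ onto itself, so the sum is selfadjoint.
\end{itemize}
Thus the bracket is a bounded selfadjoint operator, and multiplying by $\ii$ gives $(B_u^{(n)})^*=-B_u^{(n)}$.

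Boundedness of all operators involved ensures there are no domain issues.
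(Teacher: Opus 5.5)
Your proof is correct and follows the paper's argument essentially verbatim. Boundedness comes from the same induction $L_u^j\Pi u\in H^{s-j}_+$, using that multiplication by $u$ is bounded on $H^{s-j}$, followed by Sobolev embedding; anti-selfadjointness comes from the same observation that $T_b^*=T_{\overline b}$ together with the reindexing $j\mapsto n-1-j$, which leaves the quadratic sum unchanged.
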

\begin{proof}
The anti-selfadjointness is immediate from the definition. Indeed 
$$ (B_u^{(n)})^* = -i \big( T_{\overline{L_u^n \Pi u}} + T_{L_u^n \Pi u} - \sum_{j=0}^{n-1} (T_{L_u^j \Pi u} T_{\overline{L_u^{n-1-j} \Pi u}})^* \big),$$
and 
$$
(T_{L_u^j\Pi u}T_{\overline{L_u^{n-1-j}\Pi u}})^*
=T_{L_u^{n-1-j}\Pi u}T_{\overline{L_u^j\Pi u}},$$
so the quadratic sum is invariant under the reindexing. Hence, $(B_u^{(n)})^* = -B_u^{(n)}$.
	
For the boundedness, it suffices to check that every symbol $L_u^k \Pi u$ belongs to $L^\infty(\mathbb{R})$ for $0 \le k \le n$, since for every bounded symbol $b$ one has $\|T_u\| \leq \|u\|_{L^\infty}$. Recall the definition of the Lax operator $L_u := D - T_u$. We first show that
\begin{equation}\label{eq:Luinduction}
L_u^k\Pi u \in H_+^{s-k}(\R), \qquad 0\le k\le n.
\end{equation}
The case $k=0$ is immediate, since $\Pi u\in H_+^s(\R)$. Now let $0\le k\le n-1$ and assume that $
L_u^k\Pi u \in H_+^{s-k}(\R). $ Since $u\in H^s(\R)\subset H^{s-k}(\R)$, multiplication by $u$ is bounded on $H^{s-k}(\R)$, and therefore, using also the boundedness of $\Pi$ on $H^{s-k}(\R)$, the Toeplitz operator
$
T_u=\Pi(u\cdot):H_+^{s-k}(\R)\to H_+^{s-k}(\R)
$
is bounded. On the other hand,
$D:H_+^{s-k}(\R)\to H_+^{s-k-1}(\R).$
It follows that
$$
L_u=D-T_u:H_+^{s-k}(\R)\to H_+^{s-k-1}(\R),
$$
and therefore
$
L_u^{k+1}\Pi u \in H_+^{s-k-1}(\R).$
By induction, we obtain \eqref{eq:Luinduction}.
In particular, by the hypothesis $s > n + 1/2$ and the Sobolev embedding ($H^\sigma(\mathbb{R}) \hookrightarrow L^\infty(\mathbb{R})$ for $\sigma > 1/2$), we obtain
$$
L_u^k\Pi u\in H^{s-k}(\R)\hookrightarrow L^\infty(\R), \qquad 0\le k\le n.
$$
Hence, all symbols and their conjugates in the expression for $B_u^{(n)}$ belong to $L^\infty(\mathbb{R})$, which concludes the boundedness of $B_u^{(n)}$.
\end{proof}

As a consequence, if we define the family of bounded operators
$\{U(t)\}_{t\in \mathbb{R}}$ on $L^2_+(\mathbb{R})$ by the linear ODE 
$$
\frac{d}{dt}U(t)=B_{u(t)}^{(n)}U(t),\qquad U(0)=\mathrm{Id},$$
then $U(t)$ is unitary and 
$$L_{u(t)} = U(t) L_{u(0)} U(t)^*.$$
The identity shows that the operator $L_{u(t)} $ evolves by unitary conjugation, which implies that its spectrum is preserved. This isospectral property is a fundamental feature of the hierarchy and will be used throughout our proofs. An analogous property holds for the standard \eqref{BO} equation (see \cite[Prop. 4.4]{gerard2026lectures} for a detailed proof).

\section{Proof of the explicit formulae on the line}\label{secproof}
This section is devoted to the proof of Theorem \ref{mainBOhier}.  For the sake of clarity, we proceed in two steps. We first prove the explicit formula for the third-order flow, which clarifies the main ideas. Subsequently, we return to the general $(n+1)$-st flow and show that the same argument applies to higher-order flow of the BO hierarchy, thereby yielding  the explicit formula \eqref{BOhierexpfor}.

Following \eqref{BOhiern}, the third-order equation of the BO hierarchy is given by
\begin{equation}\label{BOhier3}
	\partial_t u = \partial_x (\nabla E_2(u)).
\end{equation}
where the Hamiltonian is defined as
$
E_2(u) = \langle (D - T_u)^{2} \Pi u, \Pi u \rangle_{L^2}.$ Alternatively, the equation \eqref{BOhier3} can be explicitly written as
\begin{equation}\label{BO3}
		\partial_t u
		+ \partial_x^3 u
		+ \frac{3}{2} \partial_x(u |D| u)
		+ \frac{3}{2} |D|(u \partial_x u)
		- \partial_x (u^3) = 0.
\end{equation}
We refer the reader to \cite[Prop. A.2]{gassot2021third} for a detailed derivation of this form. The next theorem is precisely Theorem \ref{mainBOhier} specialized to this model case.
\begin{theorem}\label{mainBO3}
	Let $u \in C(\mathbb{R}, H^s_r(\mathbb{R}))$, $s > \frac{5}{2}$, be the solution of \eqref{BO3} on the line $ \mathbb{R} $ with $ u(0) = u_0 $.  
	Then $u (t, x)= \Pi u(t, x) + \overline{\Pi u(t, x)}, \ x\in \R  $, with
	$$
	\forall\; z \in \mathbb{C}_+, \quad \Pi u(t, z) = \frac{1}{2 \ii \pi} \, I_+ \left[ \left( X^* - 3t L_{u_0}^2 - z \mathrm{Id} \right)^{-1} \Pi u_0 \right].
	$$
\end{theorem}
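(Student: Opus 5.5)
We follow the scheme of \cite{gerard2023explicit} for \eqref{BO}: combine the Lax pair for the flow \eqref{BOhier3} with a commutator identity between $X^*$ and $L_u$, and transport everything back to time $0$ through the unitary family $U(t)$ of Section~\ref{sectionlax}.

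\textbf{Step 1: the value $\Pi u(t,z)$ as a functional.} We first recall the identity
$$
\Pi u(t,z)=\frac{1}{2\ii\pi}\, I_+\!\left[(X^*-z)^{-1}\Pi u(t)\right],\qquad z\in\C_+ .
$$
It follows from $\widehat{(X^*-z)^{-1}f}(\xi)=-\ii\int_\xi^{\infty}e^{\ii z(\eta-\xi)}\hat f(\eta)\,d\eta$, evaluated at $\xi=0^+$, together with the Fourier inversion formula. Since $U(t)$ is unitary, this becomes
$$
\Pi u(t,z)=\frac{1}{2\ii\pi}\, I_+\!\left[U(t)^*(U(t)^*X^*U(t)-z)^{-1}U(t)^*\Pi u(t)\right].
$$
It therefore suffices to identify three objects: $U(t)^*\Pi u(t)$, the operator $U(t)^*X^*U(t)$, and the action of $I_+\circ U(t)$.

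\textbf{Step 2: commutator identities.} For $f\in\mathrm{Dom}(X^*)$ one has the standard relations
$$
[X^*,L_u]f=\ii f+\frac{\ii}{2\pi} I_+(f)\,\Pi u,\qquad [X^*,T_b]f=\frac{\ii}{2\pi}I_+(f)\,\Pi b .
$$
We also need $B_u^{(2)}\Pi u=L_u^2\Pi u+(\text{lower-order terms that produce } \partial_t\Pi u)$; more precisely, the flow should satisfy $\partial_t\Pi u=B^{(2)}_u\Pi u-\ii L_u^3\Pi u$ for the correct constant, exactly as $\partial_t\Pi u=B_u\Pi u-\ii L_u^2\Pi u$ holds for \eqref{BO}. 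We intend to verify this by expanding \eqref{eq:Bu_n_explicit} and using $L_u\Pi u=D\Pi u-\Pi(u\Pi u)$. It gives $U(t)^*\Pi u(t)=e^{-\ii tL_{u_0}^3}\Pi u_0$, or whatever the correct analogue turns out to be. Next we compute
$$
\frac{d}{dt}\bigl(U^*X^*U\bigr)=U^*[X^*,B^{(2)}_u]U .
$$
Using the Toeplitz commutator formula, $[X^*,B^{(2)}_u]$ should equal $-3L_u^2$ plus rank-one terms of the form $I_+(\cdot)\,g$. The $-3L_u^2$ part comes from
$$
[X^*,L_u^3]=\ii\bigl(3L_u^2\bigr)+\text{rank one},
$$
obtained by combining the Lax relation with the fact that the Hamiltonian vector field is $\partial_x\nabla E_2$. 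Cleaner still, one can take $\widetilde B:=B^{(2)}_u-\ii L_u^3$ and check that it annihilates the rank-one obstruction.

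\textbf{Step 3: conclusion.} We introduce the operator $W(t)=e^{\ii tL_{u_0}^3}U(t)^*$ (the analogue of the \eqref{BO} case). We then show three things: $W\Pi u(t)=\Pi u_0$; $\frac{d}{dt}\bigl(W X^* W^*\bigr)=-3L_{u_0}^2$ on a suitable core, so that $WX^*W^*=X^*-3tL_{u_0}^2$; and $I_+(W^*f)=I_+(f)$. The last point follows from $\frac{d}{dt}I_+(W^*f)=0$, i.e.\ $I_+\bigl((B^{(2)}_u-\ii L_u^3)g\bigr)=0$, which is checked by computing the zero-frequency traces of the Toeplitz terms. Substituting into Step 1 gives the formula of Theorem~\ref{mainBO3}.

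\textbf{Main obstacle.} The key difficulty is the algebraic identity in Step 2: the commutator of $X^*$ with the quadratic Toeplitz terms of $B^{(2)}_u$ and with $L_u^3$ must cancel exactly, rank-one parts included, leaving $3L_u^2$. The domain questions, namely the invariance of $\mathrm{Dom}(X^*)$ under $W$ and the justification of differentiating the unbounded conjugation, are handled through the resolvent, using the regularity $s>5/2$ and Lemma~\ref{lemma:boundedB}.
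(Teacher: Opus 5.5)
Your skeleton is the paper's. You use the $I_+$/resolvent representation of $\Pi u(t,z)$, conjugate $X^*$ by the Lax unitary $U(t)$, use a commutator identity for $B_u^{(2)}$, and control $U(t)^*\Pi u(t)$ and $I_+$ along the flow. Your invariance $I_+(W^*f)=I_+(f)$ is the dual form of the paper's $U(t)^*\chi_\varepsilon=e^{\ii tL_{u_0}^3}\chi_\varepsilon+o(1)$. (In Step 1 the prefactor should be $U(t)$, not $U(t)^*$.)

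You hedge on the constant, but the sign you commit to is wrong, and with it every claim of Step 3 fails. With $D=-\ii\partial_x$ and $\partial_tL_u=[B_u^{(2)},L_u]$ as in the paper,
\[
\partial_t\Pi u=\bigl(B_u^{(2)}+\ii L_u^3\bigr)\Pi u,\qquad \bigl[X^*,B_u^{(2)}+\ii L_u^3\bigr]=-3L_u^2 .
\]
Hence $U(t)^*\Pi u(t)=e^{\ii tL_{u_0}^3}\Pi u_0$, and the right operator is $W=e^{-\ii tL_{u_0}^3}U(t)^*$, for which all three claims of Step 3 hold. As a linear check, when $u\to0$ one has $U\approx\mathrm{Id}$, and $\partial_tu=-\partial_x^3u$ gives $\Pi u(t)=e^{\ii tD^3}\Pi u_0$. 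For \eqref{BO} the sign is likewise $+\ii L_u^2$, because $\partial_t\Pi u$ contains $+\ii D^2\Pi u$ while $B_u$ is bounded. With your choices $B_u^{(2)}-\ii L_u^3$ and $W=e^{\ii tL_{u_0}^3}U(t)^*$ you get instead:
\begin{enumerate}
\item $W\Pi u(t)=e^{2\ii tL_{u_0}^3}\Pi u_0$;
\item $[X^*,B_u^{(2)}-\ii L_u^3]=3L_u^2+(\text{finite rank})$;
\item $I_+\bigl((B_u^{(2)}-\ii L_u^3)g\bigr)=2\ii\langle g|L_u^2\Pi u\rangle\neq0$.
\end{enumerate}

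Second, the identity you call the main obstacle is misdescribed, and it takes only a few lines. $[X^*,B_u^{(2)}]$ contains no $-3L_u^2$ term. Using $[X^*,T_b]=\frac{\ii}{2\pi}I_+(\cdot)\Pi b$ and $\Pi\bar h=0$ for $h\in L^2_+$, expanding \eqref{eq:Bu_n_explicit} gives the rank-three operator
\[
[X^*,B_u^{(2)}]f=\frac{1}{2\pi}\Bigl(\langle f|L_u\Pi u\rangle\Pi u+\langle f|\Pi u\rangle L_u\Pi u-I_+(f)L_u^2\Pi u\Bigr).
\]
The $3L_u^2$ comes from $[X^*,L_u^3]$, via the Leibniz rule and $[X^*,L_u]=\ii\,\mathrm{Id}-\frac{\ii}{2\pi}I_+(\cdot)\Pi u$. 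Your rank-one term there has the wrong sign, and neither the Lax relation nor $\partial_x\nabla E_2$ is involved. Using $I_+(L_uf)=-\langle f|\Pi u\rangle$ and $I_+(L_u^2f)=-\langle f|L_u\Pi u\rangle$, the finite-rank remainder of $-\ii[X^*,L_u^3]$ is exactly the operator above; this is Proposition~\ref{Prop1a}.

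Your zero-frequency check is equally short once the sign is fixed. Since $I_+(T_bg)=\int bg$, every term of $B_u^{(2)}g$ vanishes under $I_+$ except $\ii T_{\overline{L_u^2\Pi u}}g$, so $I_+(B_u^{(2)}g)=\ii\langle g|L_u^2\Pi u\rangle=-I_+(\ii L_u^3g)$.

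Finally, you do not need to expand \eqref{BO3} to get $W\Pi u(t)=\Pi u_0$. Combine $\langle g|\Pi u\rangle=-I_+(L_ug)$, $L_{u(t)}W^*=W^*L_{u_0}$ and $I_+\circ W^*=I_+$ to get $\langle g|W\Pi u(t)\rangle=-I_+(L_{u_0}g)=\langle g|\Pi u_0\rangle$. This is the dual of the paper's use of $\Pi u=-L_u\chi_\varepsilon+o(1)$.
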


We first give a commutator identity between $X^*$ and the $B$ operator, which is the key point for the evolution of the conjugated operator $U(t)^*X^*U(t)$. We state it first in the third-order case, and at the same time give the general formula that will be needed later.
\begin{proposition}\label{PropXstatBn}
For every $u\in H_r^{s}(\R)$, $s > \frac{5}{2}$,
\begin{equation}\label{eq:XstarB2}
 [X^*, B_u^{(2)}] f = \frac{1}{2\pi}\Bigl(\langle f|L_u\Pi u\rangle\Pi u
+ \langle f|\Pi u\rangle L_u\Pi u
- I_+(f)L_u^2\Pi u\Bigr).
\end{equation}
More general, given $n\in \N$, for every $u\in H_r^{s}(\R)$, $s > n+ \frac{1}{2}$,
\begin{equation}\label{eq:XstarBn}
 [X^*, B_u^{(n)}] f 
= \frac{1}{2\pi}  \left(  \sum_{j=0}^{n-1} \langle f, L_u^{n-1-j} \Pi u \rangle L_u^j \Pi u - I_+(f) L_u^n \Pi u \right).
\end{equation}
\end{proposition}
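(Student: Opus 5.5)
The plan is to reduce everything to a single elementary commutator identity between $X^*$ and a Toeplitz operator, and then to expand $B_u^{(n)}$ using the Leibniz rule for commutators.

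\textbf{Step 1: the basic identity.} The identity I would establish first, and then take as the building block, is: for $h\in L^2_+\cap L^\infty$ with $\hat h$ regular enough and $f\in\mathrm{Dom}(X^*)$,
\begin{equation*}
[X^*,T_h]f=\frac{\ii}{2\pi}\,I_+(f)\,h,\qquad [X^*,T_{\overline h}]f=0 .
\end{equation*}
Both follow on the Fourier side, since $X^*$ acts as $\ii\,d/d\xi$ on $(0,\infty)$.
\begin{itemize}
\item For $T_h$ with $h\in L^2_+$, we have $\widehat{hf}(\xi)=\frac1{2\pi}\int_0^\xi\hat h(\xi-\eta)\hat f(\eta)\,d\eta$. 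Differentiating in $\xi$ produces, besides the term $\hat h*\,\ii\,\hat f'$, a boundary term. The boundary term at $\eta=0$ equals $\hat h(\xi)\hat f(0^+)$, up to the factor $\tfrac{\ii}{2\pi}$ and after an integration by parts. This gives the formula above.
\item For $\overline h$, the convolution runs over $\eta\ge\xi$. The boundary term vanishes at $\eta=\xi$ because $\hat{\overline h}$ is supported on $\xi\le0$ and is continuous at $0$ after the projection, so the commutator is zero.
\end{itemize}
I will track the $2\pi$ normalization carefully, since the paper's conventions ($I_+(f)=\hat f(0^+)$, $\langle f,g\rangle=\int f\bar g$) fix the constants. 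Concretely, $I_+(f)h\cdot\tfrac{1}{2\pi}$ corresponds to $\langle f|\chi\rangle$-type quantities via Plancherel.

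\textbf{Step 2: expand $B_u^{(n)}$.} Write $g_j:=L_u^j\Pi u$; Lemma \ref{lemma:boundedB} gives $g_j\in H^{s-j}_+\subset L^\infty$. Then
\begin{equation*}
[X^*,T_{g_n}+T_{\overline{g_n}}]f=\tfrac{\ii}{2\pi}I_+(f)\,g_n .
\end{equation*}
For the quadratic terms I use the Leibniz rule:
\begin{equation*}
[X^*,T_{g_j}T_{\overline{g_{n-1-j}}}]f=\tfrac{\ii}{2\pi}\,I_+\!\bigl(T_{\overline{g_{n-1-j}}}f\bigr)\,g_j .
\end{equation*}
The key observation is that
\begin{equation*}
I_+(T_{\bar g}f)=\widehat{\bar g f}(0)=\int \bar g f\,dx=\langle f,g\rangle .
\end{equation*}
This holds because $\Pi$ does not change the Fourier transform at $0^+$, and $\bar g f\in L^1$ has a continuous Fourier transform.

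\textbf{Step 3: collect terms.} Multiplying by the prefactor $\ii$ in $B_u^{(n)}$ gives
\begin{equation*}
[X^*,B_u^{(n)}]f=-\tfrac1{2\pi}\Bigl(I_+(f)\,g_n-\sum_j\langle f,g_{n-1-j}\rangle g_j\Bigr),
\end{equation*}
which is \eqref{eq:XstarBn}. The case \eqref{eq:XstarB2} is then $n=2$.

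\textbf{Main obstacle: domains and rigor.} I must check that $B_u^{(n)}$ preserves $\mathrm{Dom}(X^*)$, and that the boundary value $\hat h(\xi-\eta)$ makes sense. This requires $\hat g_j$ to be continuous, with $x g_j$ appropriately controlled. I would first prove the identity for $f$ in a dense class, for instance $\hat f\in C_c^\infty([0,\infty))$, and for $u$ Schwartz. The statement for $u\in H^s$ would then follow by approximation, using the boundedness of $B_u^{(n)}$ and the continuity of both sides in $u\in H^s$ with $s>n+\tfrac12$, so that the $g_j$ depend continuously on $u$ in $L^\infty\cap L^2$. The identity in the weak form $\langle [X^*,B]f,\varphi\rangle$ is what I would actually verify, to avoid differentiability of $\hat h$. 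The vanishing of $[X^*,T_{\bar h}]$ in this weak sense, namely the continuity of $\widehat{\bar h f}$ at $0$ combined with projection, is the delicate point.
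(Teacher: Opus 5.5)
Your argument is correct, and it rests on the same facts the paper uses. These are $[X^*,T_h]f=\frac{\ii}{2\pi}I_+(f)\,h$ for $h\in L^2_+$, $[X^*,T_{\overline h}]=0$, and $I_+(T_{\overline g}f)=\langle f,g\rangle$, combined with the Leibniz rule.

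The difference is organizational. You apply these identities term by term to the finite expression \eqref{eq:Bu_n_explicit} of $B_u^{(n)}$. The paper applies them once to the generating operator $B_u^{[\lambda]}=\ii(T_{w_\lambda}T_{\overline{w_\lambda}}+T_{\overline{w_\lambda}}+T_{w_\lambda})$. This gives $[X^*,B_u^{[\lambda]}]f=-\frac{1}{2\pi}\bigl(\langle f|w_\lambda\rangle w_\lambda+I_+(f)w_\lambda\bigr)$, and the paper then extracts the coefficient of $\lambda^{-(n+1)}$ after expanding $w_\lambda$ in powers of $\lambda^{-1}$.

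The paper's route handles all $n$ at once and never needs the explicit shape of $B_u^{(n)}$. The sum $\sum_j\langle f|L_u^{n-1-j}\Pi u\rangle L_u^j\Pi u$ then appears automatically as a Cauchy product. Your route is a finite computation at fixed $n$. It avoids justifying coefficient matching in the expansion of $(L_u+\lambda)^{-1}\Pi u$, which is only asymptotic to finite order because $L_u$ is unbounded.

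Two small corrections to your Step 1.

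\textbf{The commutator with $T_{\overline h}$.} Its vanishing is best seen by writing, for $\xi>0$,
$\widehat{T_{\overline h}f}(\xi)=\frac{1}{2\pi}\int_{-\infty}^0\widehat{\overline h}(\sigma)\hat f(\xi-\sigma)\,d\sigma$.
Here the limits of integration do not depend on $\xi$, and $\xi-\sigma\ge\xi>0$. So the derivative falls on $\hat f$ alone and no boundary term ever appears. Continuity of $\widehat{\overline h}$ at $0$ is not the reason; in fact $\widehat{\overline h}$ typically jumps there, since $\hat h(0^+)\neq 0$ in general.

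\textbf{Domains.} Since $g_j\in H^{s-j}$ with $s-j>\frac12$, you have $\hat g_j\in L^1\cap L^2$. These same Fourier formulas, together with Young's inequality, then show directly that $T_{g_j}$ and $T_{\overline{g_j}}$ map $\mathrm{Dom}(X^*)$ into itself. Hence the identity holds for every $f\in\mathrm{Dom}(X^*)$, and your density and weak-formulation detour is unnecessary.
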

\begin{proof}
We begin by characterizing the commutator $ [X^*, B_u^{[\lambda]}]$. By the expression $$B_u^{[\lambda]} = \ii \left( T_{w_\lambda} T_{\overline{w_\lambda}} + T_{\overline{w_\lambda}} +  T_{w_\lambda} \right),$$
we can split the commutator into three parts 
\begin{align*}
	[X^*,B_u^{[\lambda]}]
	= 
	 \ii\bigl[X^*,T_{w_\lambda}\bigr]
	+ \ii\bigl[X^*,T_{\overline{w_\lambda}}\bigr] + \ii\bigl[X^*,T_{w_\lambda}\overline{T_{w_\lambda}}\bigr].
\end{align*}
We use the elementary identity $[X^*,T_b] =  \frac{\ii}{2\pi}I_+(\cdot)\,\Pi b$ (see  \cite{gerard2021integrability, sun2021complete})), to compute each term separately. Since $w_\lambda\in L_+^2(\R)$, we have
$$
[X^*,T_{w_\lambda}]f=\frac{\ii}{2\pi}\,I_+(f)\, \Pi w_\lambda =\frac{\ii}{2\pi}I_+(f)w_\lambda,
\qquad
[X^*,T_{\overline{w_\lambda}}]f=0.
$$
For the quadratic term, the Leibniz rule gives
\begin{align*}
	[X^*,T_{w_\lambda}\,T_{\overline{w_\lambda}}]f
	= [X^*,T_{w_\lambda}]\,T_{\overline{w_\lambda}}f
	= \frac{\ii}{2\pi} I_+(T_{\overline{w_\lambda}}f)\, \Pi w_\lambda
	= \frac{\ii}{2\pi} I_+(\Pi(\overline{w_\lambda} f))\,w_\lambda
	= \frac{\ii}{2\pi}\langle f|w_\lambda\rangle w_\lambda,
\end{align*}
where we used the fact that $ I_+( \Pi (g)) = \langle g| {\bf{1}} \rangle $ when $g \in H_+$.
Substituting all terms into the main formula, we have
$$
[X^*,B_u^{[\lambda]}]f
= -\frac{1}{2\pi}\Bigl(\langle f|w_\lambda\rangle w_\lambda
+ I_+(f)\,w_\lambda\Bigr).
$$
Now, we expand
$w_\lambda
= \sum_{\delta=0}^\infty \frac{1}{\lambda^{\delta+1}}w_\lambda^{(\delta)}
$ with $w_\lambda^{(\delta)} = (-1)^{\delta} L_u^\delta \Pi u $, and compare this with the expression of $B_u^{[\lambda]}$,
\begin{equation*}
B_u^{[\lambda]}
= \frac{1}{\lambda}B_u^{(0)}
- \frac{1}{\lambda^2}B_u^{(1)}
+ \frac{1}{\lambda^3}B_u^{(2)}
- \frac{1}{\lambda^4}B_u^{(3)} + \cdots
= \sum_{n=0}^\infty \frac{(-1)^{n}}{\lambda^{n+1}}B_u^{(n)}.	
\end{equation*}
By matching the powers of $\lambda$ on both sides of the commutator identity, we can extract the commutator $ [X^*, B_u^{(2)}]$ and $ [X^*, B_u^{(n)}]$ to get the desired formulas \eqref{eq:XstarB2} and \eqref{eq:XstarBn}. This completes the derivation.
\end{proof}

The next proposition rewrites the third-order commutator, which encodes how the operator $B_{u}^{(2)}$ deforms the spectral structure associated with $ L_{u}$.
\begin{proposition}\label{Prop1a}
For every $u\in H_r^{s}(\R)$, $s > \frac{5}{2}$,
$$ \left[ X^*, B_{u}^{(2)} \right]  = -3 L_{u}^2 - \ii [X^*, L_{u}^3].$$
\end{proposition}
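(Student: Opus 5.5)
The plan is to compute $[X^*,L_u^3]$ explicitly from the elementary commutator $[X^*,L_u]$ and compare the result with formula \eqref{eq:XstarB2} of Proposition~\ref{PropXstatBn}. Both sides will be checked on a dense subspace of sufficiently regular $f$: for instance $f\in L^2_+$ with $\hat f$ smooth on $[0,+\infty)$, rapidly decaying, and with $L_u^k f\in \mathrm{Dom}(X^*)$ for $k\le 3$. Such $f$ exist since $u\in H^s$ with $s>5/2$. The identity is then understood on this common domain, or equivalently as an identity of quadratic forms.

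\textbf{Step 1: the basic commutator.} In Fourier variables $X^*=\ii\,\frac{d}{d\xi}$ on $\xi>0$ and $D=\xi$. Hence $[X^*,D]f=\ii f$, with no boundary term, because $\widehat{Df}=\xi\hat f$ vanishes at $\xi=0^+$. Combining this with $[X^*,T_u]f=\frac{\ii}{2\pi}I_+(f)\Pi u$ (used in the proof of Proposition~\ref{PropXstatBn}), I get
$$[X^*,L_u]f=\ii f-\frac{\ii}{2\pi}I_+(f)\,\Pi u .$$

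\textbf{Step 2: Leibniz rule.} Writing $[X^*,L_u^3]=[X^*,L_u]L_u^2+L_u[X^*,L_u]L_u+L_u^2[X^*,L_u]$ gives
$$[X^*,L_u^3]f=3\ii L_u^2f-\frac{\ii}{2\pi}\Bigl(I_+(L_u^2f)\Pi u+I_+(L_uf)L_u\Pi u+I_+(f)L_u^2\Pi u\Bigr).$$
Therefore
$$-3L_u^2f-\ii[X^*,L_u^3]f=-\frac{1}{2\pi}\Bigl(I_+(L_u^2f)\Pi u+I_+(L_uf)L_u\Pi u+I_+(f)L_u^2\Pi u\Bigr).$$

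\textbf{Step 3: evaluating the $I_+$ terms.} We have $I_+(Df)=(\xi\hat f)(0^+)=0$. Since $u$ is real and $I_+(\Pi g)=\langle g|\mathbf 1\rangle$, also $I_+(T_uf)=\langle uf|\mathbf 1\rangle=\langle f|\Pi u\rangle$. Hence
$$I_+(L_uf)=-\langle f|\Pi u\rangle ,$$
and applying this to $L_uf$ together with the self-adjointness of $L_u$ gives
$$I_+(L_u^2f)=-\langle L_uf|\Pi u\rangle=-\langle f|L_u\Pi u\rangle .$$
Substituting these into the last display of Step 2 yields exactly the right-hand side of \eqref{eq:XstarB2}, which proves the proposition.

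\textbf{Main obstacle.} The algebra above is straightforward; the delicate part is justifying the domain manipulations. One must check that $I_+$ and $X^*$ make sense on $L_u^kf$, in particular that $\widehat{L_u^kf}$ is $H^1$ near $0^+$ for the chosen dense class. One must also make sure that $I_+(T_uf)=\langle f|\Pi u\rangle$ holds under the available regularity, which I would verify through the limit with $\chi_\varepsilon$ in the definition of $I_+$.
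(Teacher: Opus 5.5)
Your proposal is correct and follows the paper's proof essentially verbatim: expand $[X^*,L_u^3]$ by the Leibniz rule using $[X^*,L_u]f=\ii f-\frac{\ii}{2\pi}I_+(f)\Pi u$, rewrite $I_+(L_uf)=-\langle f|\Pi u\rangle$ and $I_+(L_u^2f)=-\langle f|L_u\Pi u\rangle$, and compare with \eqref{eq:XstarB2}. Your bookkeeping ($[X^*,D]=\ii\,\mathrm{Id}$ and the term $3\ii L_u^2f$) is the correct one, whereas the paper's text contains the typos $[X^*,D]=\mathrm{Id}$ and $3\ii L_u^3f$; your remarks on choosing a dense class of $f$ also address domain questions that the paper does not discuss.
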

\begin{proof}
As the expression of $[X^*,B_u^{(2)}]$ is given in Proposition \ref{PropXstatBn}, it remains to treat the commutator $[X^*,L_u^3]$. 
By the Leibniz rule for commutators, we have
\begin{align*}
	[X^*,L_u^3]
	&= [X^*,L_u]\,L_u^2 + L_u\,[X^*,L_u]\,L_u + L_u^2\,[X^*,L_u].
\end{align*}
Recall that $[X^*, D] = \mathrm{Id}$ and $[X^*,T_u] =  \frac{\ii}{2\pi}I_+(\cdot)\,\Pi u$, where the proof can be found in \cite{gerard2021integrability, sun2021complete}. Hence,
$$
[X^*,L_u] = \ii \mathrm{Id} - [X^*,T_u]
= \ii \mathrm{Id} - \frac{\ii}{2\pi}I_+(\cdot)\,\Pi u.
$$
With this expression of $[X^*,L_u]$, we substitute back into our earlier formula for $[X^*,L_u^3]$, and perform the algebraic computation explicitly. This yields: 
\begin{align*}
[X^*,L_u^3]f
& = 3 \ii \,L_u^3f  - \frac{\ii}{2\pi} \Bigl( I_+(L_u^2 f)\,\Pi u + L_u I_+(L_uf)\,\Pi u +  L_u^2 I_+(f)\,\Pi u
\Bigr)
\end{align*}
By the definition of $I_+$ and notice that both $L_u$ and $T_u$ are selfadjoint, we have 
\begin{eqnarray*}
I_+(L_u f)= - I_+(T_u f) = - \langle T_u f|  {\bf{1}} \rangle   = - \langle f| \Pi u\rangle,  \\
I_+(L_u^2 f) =- \langle L_u f|\Pi u\rangle = - \langle f|L_u\Pi u\rangle.
\end{eqnarray*}
Hence,
\begin{align*}
[X^*,L_u^3]f= 3\ii \,L_u^3f
+ \frac{\ii}{2\pi}\Bigl(\langle f|L_u\Pi u\rangle\Pi u
+ \langle f|\Pi u\rangle L_u\Pi u
- I_+(f)L_u^2\Pi u\Bigr).
\end{align*}
Comparing this formula with identity \eqref{eq:XstarB2} yields the Proposition \ref{Prop1a}.
\end{proof}

\begin{proposition}\label{Prop2}
For the operator $U(t)$ of the third equation of the Benjamin–Ono equation, we have 	
\begin{align*}
U^*(t)  \chi_\varepsilon = e^{i t L_{u_0}^3} \chi_\varepsilon + o(1), \qquad
U^*(t) \Pi u = e^{i t L_{u_0}^3} \Pi u_0.
\end{align*}
\end{proposition}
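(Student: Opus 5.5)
The plan is to treat the $\chi_\varepsilon$ statement as a Duhamel-type argument, and then obtain the $\Pi u$ statement from it through the approximation
$$\Pi u = -\lim_{\varepsilon\to0} L_u\chi_\varepsilon \quad\text{in } L^2_+,$$
so that no direct computation of $\partial_t\Pi u$ is needed. Here $u=u(t)$ is the solution of \eqref{BO3}, $u_0=u(0)$, $L_0:=L_{u_0}$, and $o(1)$ means $\varepsilon\to 0$ in $L^2$, uniformly for $t$ in compact sets.

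\emph{Step 1: elementary limits.} Since $\chi_\varepsilon(x)=\chi_1(\varepsilon x)$, we have $\|D^k\chi_\varepsilon\|_{L^2}=C_k\varepsilon^{k-1/2}$ for $k\ge1$. Hence $D\chi_\varepsilon\to0$. For $g\in L^2_+\cap L^\infty$ one has $T_g\chi_\varepsilon=g\chi_\varepsilon\to g$ by dominated convergence. For $g\in L^2_+$, $T_{\bar g}\chi_\varepsilon=\Pi(\bar g\chi_\varepsilon)\to\Pi\bar g=0$.

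These limits give $L_u\chi_\varepsilon\to-\Pi u$. Applying them term by term to \eqref{eq:Bu_n_explicit} with $n=2$, and using Lemma \ref{lemma:boundedB} for the boundedness of the symbols, gives
$$B_u^{(2)}\chi_\varepsilon\to \ii L_u^2\Pi u .$$
The same bookkeeping, using $s>5/2$ and the Sobolev bounds \eqref{eq:Luinduction}, gives $L_u^3\chi_\varepsilon\to -L_u^2\Pi u$. To see this, expand $L_u^3\chi_\varepsilon=L_u^2(D\chi_\varepsilon)-L_u^2T_u\chi_\varepsilon$. The terms carrying at least one $D$ on $\chi_\varepsilon$ are $O(\varepsilon^{1/2})$. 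The remaining term $L_u^2(u\chi_\varepsilon)$ tends to $L_u^2\Pi u$, since derivatives falling on $\chi_\varepsilon$ produce positive powers of $\varepsilon$. Consequently
$$B_u^{(2)}\chi_\varepsilon=-\ii L_u^3\chi_\varepsilon+o(1),$$
uniformly in $t$ on compacts, since $u\in C(\R,H^s)$.

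\emph{Step 2: Duhamel for $U^*\chi_\varepsilon$.} From $U'=B^{(2)}_uU$ we get $\frac{d}{dt}U^*=-U^*B^{(2)}_u$. Together with $U^*L_u=L_0U^*$ this gives
$$\frac{d}{dt}U^*\chi_\varepsilon = \ii\, U^*L_u^3\chi_\varepsilon+o(1)=\ii L_0^3U^*\chi_\varepsilon+o(1).$$
The function $\chi_\varepsilon$ lies in $H^\infty_+$, and $U^*$ maps $\mathrm{Dom}(L_u^3)$ into $\mathrm{Dom}(L_0^3)$, so this manipulation is legitimate. Differentiating $e^{-\ii tL_0^3}U^*(t)\chi_\varepsilon$ and integrating from $0$ to $t$ yields the first claim, because $e^{-\ii tL_0^3}$ is unitary.

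\emph{Step 3: the $\Pi u$ identity.} Write
$$U^*\Pi u=-\lim_{\varepsilon\to0} U^*L_u\chi_\varepsilon=-\lim_{\varepsilon\to0} L_0U^*\chi_\varepsilon .$$
It therefore suffices to upgrade Step 2 to the graph norm of $L_0$, namely $L_0U^*\chi_\varepsilon=e^{\ii tL_0^3}L_0\chi_\varepsilon+o(1)$. Once this holds, $e^{\ii tL_0^3}L_0\chi_\varepsilon\to-e^{\ii tL_0^3}\Pi u_0$, which proves the second identity.

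For the upgrade I would redo Step 2 with $\psi_\varepsilon(t):=U^*(t)L_{u(t)}\chi_\varepsilon=L_0U^*\chi_\varepsilon$. Its derivative is
$$U^*\bigl(-B^{(2)}_uL_u\chi_\varepsilon - T_{\partial_tu}\chi_\varepsilon\bigr).$$
One then checks that the bracket equals $\ii L_u^4\chi_\varepsilon+o(1)$. By the Lax identity, $T_{\partial_t u}=-[B^{(2)}_u,L_u]$, so the bracket is $-L_uB^{(2)}_u\chi_\varepsilon$. Step 1 applied to $L_uB^{(2)}_u\chi_\varepsilon$ requires $B^{(2)}_u\chi_\varepsilon+\ii L_u^3\chi_\varepsilon\to0$ in $H^1$, not only in $L^2$.

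\emph{Main obstacle.} The delicate point is this control of the error terms in a stronger norm, namely $H^1$ convergence of $B^{(2)}_u\chi_\varepsilon+\ii L_u^3\chi_\varepsilon$. Each term in $B^{(2)}_u$ must be expanded and the regularity $s>5/2$ tracked carefully. I expect this to work because every loss of a derivative on $\chi_\varepsilon$ gains a factor $\varepsilon$.
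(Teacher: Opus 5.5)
Your Steps 1--2 are the paper's argument. You carry them out with more care: you justify $L_u^3\chi_\varepsilon\to -L_u^2\Pi u$ through $H^2$ convergence instead of applying $L_u^2$ to an $L^2$-$o(1)$, and you track uniformity in $t$.

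The gap is Step 3. You correctly observe that going from $U^*(t)\chi_\varepsilon=e^{\ii tL_{u_0}^3}\chi_\varepsilon+o(1)$ to $L_{u_0}U^*(t)\chi_\varepsilon=L_{u_0}e^{\ii tL_{u_0}^3}\chi_\varepsilon+o(1)$ is not automatic, because $L_{u_0}$ is unbounded. The paper's own proof takes this step without comment. However, the graph-norm upgrade you propose is only stated as an expectation, and as designed it does not work at the stated regularity:
\begin{itemize}
\item For $5/2<s<3$ one only has $\partial_t u\in C(\R,H^{s-3})$, so $t\mapsto U^*(t)L_{u(t)}\chi_\varepsilon$ is not differentiable with values in $L^2$.
\item The terms $L_u^4\chi_\varepsilon$ and $L_uB_u^{(2)}\chi_\varepsilon$ each contain $(DL_u^2\Pi u)\chi_\varepsilon$ with $DL_u^2\Pi u\in H^{s-3}$, so they need not lie in $L^2$.
\item The $H^1$-smallness of $B_u^{(2)}\chi_\varepsilon+\ii L_u^3\chi_\varepsilon$ would have to come from the exact cancellation of the two $(L_u^2\Pi u)\chi_\varepsilon$ terms. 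It does not follow from the heuristic that derivatives falling on $\chi_\varepsilon$ gain powers of $\varepsilon$.
\end{itemize}
So your "main obstacle" is a genuine unresolved step, and the route you chose is considerably harder than necessary.

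The missing observation is that no strong upgrade is needed. The second identity relates two vectors that do not depend on $\varepsilon$, so weak convergence suffices. Take $\phi\in\mathrm{Dom}(L_{u_0})$. Use $U^*(t)L_{u(t)}=L_{u_0}U^*(t)$ on $H^1_+$, the self-adjointness of $L_{u_0}$, and the fact that $e^{\ii tL_{u_0}^3}$ commutes with $L_{u_0}$ on its domain. The first identity then gives
\[
\langle U^*(t)L_{u(t)}\chi_\varepsilon,\phi\rangle=\langle U^*(t)\chi_\varepsilon,L_{u_0}\phi\rangle=\langle e^{\ii tL_{u_0}^3}\chi_\varepsilon,L_{u_0}\phi\rangle+o(1)=\langle e^{\ii tL_{u_0}^3}L_{u_0}\chi_\varepsilon,\phi\rangle+o(1).
\]
Now let $\varepsilon\to0$, using $L_{u(t)}\chi_\varepsilon\to-\Pi u(t)$ and $L_{u_0}\chi_\varepsilon\to-\Pi u_0$ strongly in $L^2$. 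This yields $\langle U^*(t)\Pi u(t)-e^{\ii tL_{u_0}^3}\Pi u_0,\phi\rangle=0$ for all $\phi$ in a dense subspace, which is the claim. This two-line duality argument replaces your entire Step 3. It also closes the point that the paper's one-line passage leaves implicit.
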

\begin{proof}
By the evolution equation $\frac{d}{dt}U(t) = B_{u}^{(2)} U(t)$ for the unitary operator $ U(t)$, and the fact that the operator $B_{u}^{(2)}$ is anti-selfadjoint, we have
\begin{equation}\label{eq:devU*}
\frac{d}{dt} U^*(t) = - U^*(t) B_{u}^{(2)}. 
\end{equation}
Moreover, for $f\in L_+^2(\R)$, one has
$$
T_f\chi_\varepsilon=f+o(1),
\qquad
T_{\overline f}\chi_\varepsilon=o(1)
\qquad (\varepsilon\to0^+).$$
Applying this to $ B_{u}^{[\lambda]}$, we obtain 
$$
B_u^{[\lambda]} \chi_\varepsilon = \ii \left( T_{w_\lambda} T_{\overline{w_\lambda}} + T_{\overline{w_\lambda}} +  T_{w_\lambda} \right) \chi_\varepsilon 
= \ii (w_\lambda + o(1)).
$$
We recall the expansions of $w_\lambda$ and $B_u^{[\lambda]}$ in powers of $\lambda^{-1}$,
\begin{align*}
w_\lambda
= \frac{1}{\lambda}\,\Pi u
- \frac{1}{\lambda^2}L_u\,\Pi u
+ \frac{1}{\lambda^3}L_u^2\,\Pi u \cdots,\quad 
B_u^{[\lambda]}
 = \frac{1}{\lambda}B_u^{(0)}
- \frac{1}{\lambda^2}B_u^{(1)}
+ \frac{1}{\lambda^3}B_u^{(2)}\cdots
\end{align*}
By matching the powers,  we get
\begin{equation}\label{eq:Buchie}
B_u^{(2)}\chi_\varepsilon = \ii L_u^2\,\Pi u + o(1).
\end{equation}
On the other hand,
\begin{equation}\label{eq:Luchie}
	L_u\chi_\varepsilon=(D-T_u)\chi_\varepsilon=o(1)-\Pi(u\chi_\varepsilon)=-\Pi u+o(1),
\end{equation}
so $L_u^3\chi_\varepsilon=-L_u^2\Pi u+o(1)$. 
Combining this with \eqref{eq:Buchie}, we find
$$
B_u^{(2)}\chi_\varepsilon=-iL_u^3\chi_\varepsilon+o(1).
$$
Substituting this relation into the equation \eqref{eq:devU*}, we find 
\begin{equation*}
	\frac{d}{dt} U^*(t)\chi_\varepsilon = - U^*(t) B_{u}^{(2)} \chi_\varepsilon = i U^*(t) L_u^3\chi_\varepsilon + o(1) = iL_{u_0}^3U^*(t)\chi_\varepsilon+o(1).
\end{equation*}
Solving this ODE yields the first identity.\\
For the second identity, using the relation \eqref{eq:Luchie} and identity $U^*(t) L_{u(t)} U(t) = L_{u_0}$, we have 
\begin{align*}	
U^*(t) \Pi u
& = U^*(t)(- L_u \chi_\varepsilon + o(1)) 
= - L_{u_0} U^* (t) \chi_\varepsilon + o(1).
\end{align*} 
Therefore, 
$$
U^*(t) \Pi u =  - L_{u_0} e^{i t L_{u_0}^3} \chi_\varepsilon + o(1).	
$$
Using again \eqref{eq:Luchie} and the fact that $L_{u_0}$ commutes with $e^{itL_{u_0}^3}$, we obtain $U^*(t)\Pi u(t)=e^{itL_{u_0}^3}\Pi u_0.$
\end{proof}

We can now prove the explicit formula in the Theorem \ref{mainBO3} for the third-order flow.
\begin{proof}[Proof of Theorem \ref{mainBO3}]
For every \( z \in \mathbb{C}_+ \), by the inverse Fourier transform and Plancherel Theorem, we have
\begin{align*}
	\Pi u(t, z) 
	&= \frac{1}{2\ii  \pi} \int_0^{\infty} e^{i z \xi} \, \widehat{\Pi u}(t, \xi) \, d\xi \\
	& =  \frac{1}{2\ii \pi} \int_0^{\infty} e^{i z \xi} \lim_{\varepsilon \to 0} \int_{\mathbb{R}} e^{-i \xi x} \frac{\Pi u(t,  x)}{1 + \ii \varepsilon x} \, dx  \\
	& =  \lim_{\varepsilon \to 0^+} \frac{1}{2\ii  \pi} \int_0^{\infty} e^{i z \xi} \left\langle e^{- \ii \xi X^*} \Pi u \mid \chi_\varepsilon \right\rangle \, d\xi
	= \lim_{\varepsilon \to 0^+} \frac{1}{2 \ii  \pi} \left\langle \left( X^* - z \, \mathrm{Id} \right)^{-1} \Pi u \mid \chi_\varepsilon \right\rangle.
\end{align*}
Since $U(t)$ is a unitary operator and it preserves the inner product, we infer that
\begin{equation}\label{eq:Piutz}
	\begin{aligned}
		\Pi(u)(t,z) 
		& = \lim_{\varepsilon \to 0 } \frac{1}{2 \ii \pi} \big\langle \big( U^*(t)  X^* - z  {{{\rm{Id}}}}  \big)^{-1} \Pi(u_0)\, \vert \, { U^*(t)  \chi_\varepsilon}\big\rangle\\	
		& = \lim_{\varepsilon \to 0 } \frac{1}{2 \ii \pi} \big\langle \big( U^*(t) X^* U(t) - z  {{{\rm{Id}}}}  \big)^{-1} U^*(t) \Pi(u_0)\, \vert \, { U^*(t)  \chi_\varepsilon}\big\rangle.
	\end{aligned}
\end{equation}

Now, it remains to obtain the expression of the quantity $U^*(t) X^* U(t)$. 
By the evolution equations for the operator $ U(t)$ and its adjoint operator $ U^*(t)$
\begin{align*}
\frac{d}{dt}U(t) = B_{u}^{(2)} U(t), \qquad
\frac{d}{dt} U^*(t) = - U^*(t) B_{u}^{(2)},
\end{align*}
we compute the time derivative of the conjugated operator \( U^*(t) X^* U(t) \),
\begin{align*}
		\frac{d}{dt} U^*(t) X^* U(t) 
		= U^*(t) \left[ X^*, B_{u}^{(2)} \right] U(t)
		= U^*(t) \left( -3 L_{u(t)}^2 - i [X^*, L_{u(t)}^3] \right) U(t),
\end{align*}
where we used the known commutator identities (from Proposition \ref{Prop1a}). Recall the key transform identity $U^*(t) L_{u(t)} U(t) = L_{u_0}$, we infer
$$
\frac{d}{dt} \left( U^*(t) X^* U(t) \right) = -3 L_{u_0}^2 - i \left[ U^*(t) X^* U(t), L_{u_0}^3 \right].
$$
Using the integrating factors $e^{-i t L_{u_0}^3}$ and $e^{i t L_{u_0}^3}$, we solve this ODE to get 
$$
e^{-i t L_{u_0}^3} \, U^*(t) X^* U(t) \, e^{i t L_{u_0}^3}
= -3t L_{u_0}^2 + U(0)^* X^* U(0).$$
Since \( U(0) = \mathrm{Id} \), the last term becomes $X^* $. Thus, 
\begin{align}\label{eqUXU}
	U^*(t) X^* U(t) = -3t L_{u_0}^2 + e^{i t L_{u_0}^3} X^* e^{-i t L_{u_0}^3}.
\end{align}
We now insert \eqref{eqUXU} into \eqref{eq:Piutz} and use Proposition \ref{Prop2},
\begin{equation*}
	\begin{aligned}
		\Pi(u)(t, z) 
		&= \lim_{\varepsilon \to 0} \frac{1}{2 \ii \pi} 
		\left\langle \left( -3t L_{u_0}^2 + e^{i t L_{u_0}^3} X^* e^{- \ii t L_{u_0}^3} - z \,\mathrm{Id} \right)^{-1} e^{i t L_{u_0}^3} \Pi(u_0) 
		\,\middle|\, e^{i t L_{u_0}^3} \chi_\varepsilon \right\rangle \\
		& = \lim_{\varepsilon \to 0} \frac{1}{2 \ii \pi} 
		\left\langle \left( X^*  - 3t L_{u_0}^2- z \,\mathrm{Id} \right)^{-1}  \Pi(u_0) 
		\,\middle|\,  \chi_\varepsilon \right\rangle.
	\end{aligned}
\end{equation*}
By the definition of $I_+$, this is exactly
$$\forall z \in \mathbb{C}_+, \quad \Pi u(t, z) = \frac{1}{2 \ii \pi} \, I_+ \left[ \left( X^* - 3t L_{u_0}^2 - z \mathrm{Id} \right)^{-1} \Pi u_0 \right].$$
The proof of Theorem \ref{mainBO3} is complete. 
\end{proof}	

We now return to the full hierarchy. Once the third-order case is understood, the passage to general $n$ is direct. The next proposition is the exact analogue of Proposition \ref{Prop1a}.
\begin{proposition}\label{Prop1n}
Given $n\in \N$, for every $u\in H_r^{s}(\R)$, $s > n +\frac{1}{2}$,
$$ \left[ X^*, B_{u }^{(n)} \right]  = - (n+1) L_{u}^n - \ii [X^*, L_{u}^{n+1}].$$
\end{proposition}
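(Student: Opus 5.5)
The plan is to mirror the proof of Proposition~\ref{Prop1a}: compute $[X^*,L_u^{n+1}]$ explicitly and compare it with the formula \eqref{eq:XstarBn} for $[X^*,B_u^{(n)}]$ from Proposition~\ref{PropXstatBn}. First, by the Leibniz rule for commutators,
$$
[X^*,L_u^{n+1}]=\sum_{k=0}^{n} L_u^{k}\,[X^*,L_u]\,L_u^{n-k}.
$$
We then insert $[X^*,L_u]=\ii\,\mathrm{Id}-\frac{\ii}{2\pi}I_+(\cdot)\,\Pi u$, which was already used in the proof of Proposition~\ref{Prop1a}. The identity part contributes $(n+1)\ii L_u^{n}$. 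Applied to $f$, the rank-one part contributes
$$
-\frac{\ii}{2\pi}\sum_{k=0}^{n} I_+(L_u^{n-k}f)\,L_u^{k}\Pi u .
$$

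The next step is to evaluate $I_+(L_u^{m}f)$ for $0\le m\le n$. For $m=0$ the term is just $I_+(f)$, which gives the summand $-\frac{\ii}{2\pi}I_+(f)L_u^n\Pi u$. For $m\ge1$, write $L_u^m f=(D-T_u)L_u^{m-1}f$ and use $I_+(Dg)=0$; formally this holds because $\widehat{Dg}(\xi)=\xi\hat g(\xi)$ vanishes at $0^+$. We also use $I_+(T_u g)=\langle g|\Pi u\rangle$, as in the proof of Proposition~\ref{Prop1a}, together with the self-adjointness of $L_u$. Together these give
$$
I_+(L_u^m f)=-\langle L_u^{m-1}f|\Pi u\rangle=-\langle f|L_u^{m-1}\Pi u\rangle .
$$
Substituting and reindexing with $j=k$, $m=n-k$, $m-1=n-1-j$ for $0\le j\le n-1$, we obtain
$$
[X^*,L_u^{n+1}]f=(n+1)\ii L_u^{n}f+\frac{\ii}{2\pi}\Bigl(\sum_{j=0}^{n-1}\langle f|L_u^{n-1-j}\Pi u\rangle L_u^{j}\Pi u-I_+(f)L_u^{n}\Pi u\Bigr).
$$
By \eqref{eq:XstarBn}, the bracketed term is exactly $2\pi[X^*,B_u^{(n)}]f$. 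Multiplying by $-\ii$ then gives $-\ii[X^*,L_u^{n+1}]=(n+1)L_u^n+[X^*,B_u^{(n)}]$, which is the claimed identity.

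The main obstacle is justifying these manipulations on appropriate domains, since $X^*$ and $L_u$ are unbounded. I would first prove the identity for $f$ in a dense core, for instance $f\in\mathrm{Dom}(X^*)\cap H^{\infty}_+$ with $\hat f$ smooth and compactly supported in $[0,\infty)$. On such $f$ all iterates $L_u^m f$ lie in $\mathrm{Dom}(X^*)$: the regularity $s>n+\frac12$ ensures that the symbols $L_u^k\Pi u$ are in $H^{s-k}$, as in the proof of Lemma~\ref{lemma:boundedB}, and that $T_u$ preserves the relevant domains. On this core, $I_+(Dg)=0$ follows because $\xi\hat g(\xi)$ is continuous on $[0,\infty)$ and vanishes at $0$. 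The identity is then read as an equality of operators on this core, in the same sense as Proposition~\ref{Prop1a}, which suffices for its use in the ODE for $U^*(t)X^*U(t)$.
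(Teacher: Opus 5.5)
Your proposal is correct and follows the paper's proof essentially step for step: the Leibniz expansion of $[X^*,L_u^{n+1}]$, the substitution $[X^*,L_u]=\ii\,\mathrm{Id}-\frac{\ii}{2\pi}I_+(\cdot)\,\Pi u$, the evaluation $I_+(L_u^{m}f)=-\langle f|L_u^{m-1}\Pi u\rangle$ for $m\ge 1$, and the comparison with \eqref{eq:XstarBn}. Your final paragraph, which restricts to a dense core, goes beyond the paper, since the paper performs the computation formally; it is a sensible way to make the argument rigorous, but the claim that all iterates $L_u^m f$ stay in $\mathrm{Dom}(X^*)$ needs a short check, because $D$ alone does not preserve $\mathrm{Dom}(X^*)$ and your core is not invariant under $T_u$.
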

\begin{proof}
We start with the computation of the commutator $[X^*, L_u^{n+1}]$. 
Applying the general Leibniz rule, we get
$$
[X^*, L_u^{n+1}] = \sum_{j=0}^n L_u^j [X^*, L_u] L_u^{n-j}.$$
As in the proof of Proposition \ref{Prop1a}, we substitute the elementary identity $ [X^*, L_u] f = \ii f - \frac{i}{2\pi} I_+(f) \Pi u$ into the sum above,
	\begin{align*}
		[X^*, L_u^{n+1}] f 
		&= \sum_{j=0}^n L_u^j \Big( \ii L_u^{n-j} f - \frac{\ii}{2\pi} I_+(L_u^{n-j} f) \Pi u \Big)\\
		&=(n+1) i L_u^{n} f - \frac{\ii}{2\pi} \sum_{j=0}^n L_u^j \left( I_+(L_u^{n-j} f) \Pi u \right).
	\end{align*}
For $0\leq j\leq n-1$,
	$$
	I_+(L_u^{n-j} f) = I_+\left( L_u (L_u^{n-j-1} f) \right) = -\langle L_u^{n-1-j} f| \Pi u \rangle = - \langle f| L_u^{n-1-j} \Pi u \rangle.
	$$
and the term $j=n$ is simply $I_+(f)$. Therefore,
	$$[X^*, L_u^{n+1}] f  = (n+1) \ii L_u^{n} f + \frac{\ii}{2\pi} \Bigl( 
	\sum_{j=0}^{n-1} \langle f| L_u^{n-1-j} \Pi u \rangle L_u^j \Pi u - I_+(f) L_u^n \Pi u \Bigr).
	$$
Recall the  general expression of $[X^*, B_u^{(n)}] f$ in Proposition \ref{PropXstatBn}, we finally have
\begin{align*}
[X^*, B_u^{(n)}] f 
 = \frac{1}{2\pi} \Big(  \sum_{j=0}^{n-1} \langle f| L_u^{n-1-j} \Pi u \rangle L_u^j \Pi u - I_+(f) L_u^n \Pi u \Big) 
= -  \ii [X^*, L_u^{n+1}] f - (n+1) L_u^{n} f.
\end{align*}
The proof of Proposition \ref{Prop1n} is complete.
\end{proof}

With this identity in hand, using the same strategy as in the proof of Theorem \ref{mainBO3}, we can derive the explicit formulae for all BO hierarchy. 
We include the argument for completeness, in order to make clear where the factor $(n+1)$ and the power $L_{u_0}^{n+1}$ enter.
\begin{proof}[Proof of Theorem \ref{mainBOhier}]
As in the proof of Theorem \ref{mainBO3}, we start from
\begin{equation}\label{eq:Piu}
\Pi u (t, z) = \frac{1}{2\pi \ii} \lim_{\varepsilon \to 0} \left\langle \left( U(t)^* X^* U(t) - z \mathrm{Id} \right)^{-1} U(t)^* \Pi u (t, z) \middle| U(t)^* \chi_\varepsilon \right\rangle.
\end{equation}
Let us compute again the time derivative of the conjugated operator $U^*(t) X^* U(t)$, and use Proposition \ref{Prop1n},
\begin{align*}
	\frac{d}{dt} U(t)^* X^* U(t) 
	& = U(t)^* [X^*, B^{(n)}_u ]U(t)\\
	& = U(t)^* \left( - \ii [X^*, L_u^{n+1}]  - (n+1) L_u^{n}   \right) U(t),
\end{align*}
from which we have
\begin{equation*}
	U^*(t) X^* U(t) =
			- (n+1)  t L_{u_0}^n + e^{  i t L_{u_0}^{n+1}} X^* e^{-i t L_{u_0}^{n+1}}.
\end{equation*}	
In the same way as in Proposition \ref{Prop2}, comparing the coefficient of $\lambda^{-n-1}$ in
$$
B_u^{[\lambda]}\chi_\varepsilon= \ii w_\lambda+o(1)
$$
gives
\begin{align}\label{eq:Uchin}
 U^*(t) \Pi u =  e^{itL_{u_0}^{n+1} } \Pi u_0 ,  \qquad
 U^*(t)  \chi_\varepsilon = e^{itL_{u_0}^{n+1} } \chi_\varepsilon + o(1).
\end{align}
Substitute the above identities into \eqref{eq:Piu}, we obtain
\begin{align*}
	& \Pi u (t, z) = \\
	& \frac{1}{2\pi \ii} \lim_{\varepsilon \to 0} \left\langle \left( - (n+1)t L_{u_0}^n + e^{\ii tL_{u_0}^{n+1}} X^* e^{- \ii tL_{u_0}^{n+1}} - z I d \right)^{-1} e^{itL_{u_0}^{n+1} } \Pi u_0 \middle| e^{itL_{u_0}^{n+1} } \chi_\varepsilon \right\rangle\\
	& = \frac{1}{2\pi \ii} I_+ \left[ \left( X^* - (n+1)t L_{u_0}^n - z I d \right)^{-1} \Pi u_0\right].
\end{align*}
This is exactly the formulae \eqref{BOhierexpfor} and the proof of Theorem \ref{mainBOhier} is complete.
\end{proof}

\begin{remark}
It is worth noting that, unlike the derivation in \cite{gerard2023explicit}, our proof utilizes the asymptotic expansion of $B_u^{[\lambda]}$ directly, which bypasses the need for an explicit representation of the operator $B_u^{(n)}$.	
\end{remark}

We conclude this section with a remark that will be used repeatedly in Sections \ref{seczeroBo3} and \ref{seczeroBon}. The explicit formulas above involve resolvents of unbounded operators such as $X^*-3tL_{u_0}^2$ and $X^*-(n+1)tL_{u_0}^n$. Before applying these formulas in the zero-dispersion analysis, one therefore has to know that the corresponding resolvents are well defined. Here, we give the definition of the maximally dissipative operators \cite{reed2003methods}.

\begin{definition}[Maximally dissipative operator]
A linear operator $A$ on a Hilbert space $H$ is called maximally dissipative if, for every $\lambda >0$, the operator $\lambda \text{Id} - A: \text{Dom}(A) \to H$ is onto and if, all $f \in \text{Dom}(A)$, 
$$\text{Re}\langle Af, f \rangle \leq 0.$$
If $A$ is maximally dissipative and $\text{Im}(z) > 0$, then
$$ \|(A - z I)^{-1}\| \leq \frac{1}{\text{Im}(z)}. $$
\end{definition}

\begin{remark}\label{remark:well-def}
In the derivation of the explicit formula for the third-order flow, we obtained 
$$
e^{-i t L_{u_0}^3} \, U^*(t) X^* U(t) \, e^{ \ii t L_{u_0}^3}
= X^* -3t L_{u_0}^2,$$
and, more generally,
\begin{equation} \label{eq:sanwich_n}
e^{- \ii tL_{u_0}^{n+1}}U^*(t)X^*U(t)e^{ \ii tL_{u_0}^{n+1}}=X^*-(n+1)tL_{u_0}^n.
\end{equation}
Setting $A_0 := - \ii X^*$, which is known to be maximally dissipative,  we show that the operator $A_n := -\ii(X^* - (n+1)t L_{u_0}^n)$ is also maximally dissipative. Since both $U(t)$ and $e^{it L_{u_0}^{n+1}}$ are the unitary, their product $V(t) := U(t) e^{ \ii t L_{u_0}^{n+1}}$ is unitary on $L^2_+(\mathbb{R})$. From \eqref{eq:sanwich_n}, it follows that
$$A_n = V(t)^* (- \ii X^*) V(t) = V(t)^* A_0 V(t).$$
To verify the dissipativity of $A_n$, let $f \in \text{Dom}(A_n)$, then $V(t) f \in \text{Dom}(A_0)$. 
Using the unitarity of $V(t)$ ($V^*V = \text{Id}$), we have
\begin{align*}
	\operatorname{Re} \langle A_n f, f \rangle = \operatorname{Re} \langle V(t)^* A_0 V(t) f, f \rangle 
	= \operatorname{Re} \langle A_0 (V(t)f), V(t)f \rangle \leq 0,
\end{align*}
as $A_0$ is dissipative. 
To prove maximality, using again the unitarity of $V(t)$, for any $\lambda >0$, we have
\begin{align*}
	\lambda \text{Id} - A_n = \lambda V(t)^* V(t) - V(t)^* A_0 V(t) 
	= V(t)^* \left( \lambda \text{Id} - A_0 \right) V(t).
\end{align*}
Since $V(t)$ is bijective and the operator $(\lambda \text{Id} - A_0)$ is surjective, it follows that $\lambda \text{Id} - A_n $ is  surjective. 
Thus, $A_n$ is maximally dissipative. Therefore, for every $z\in\C_+$,
$$
\bigl\|(X^*-(n+1)tL_{u_0}^n-z\mathrm{Id})^{-1}\bigr\|\leq\frac1{\operatorname{Im}z},
$$
and similarly in the third-order case with $X^*-3tL_{u_0}^2$.

Consequently, the resolvents appearing in Theorem \ref{mainBO3} and in Theorem \ref{mainBOhier} are well defined for every $z\in\C_+$.
\end{remark}
 
\section{Classification of traveling waves}\label{sectiontw} 
This section is devoted to the proof of Theorem~\ref{thm:classification}.
We begin with a uniform estimate for the explicit formula, which will be used  subsequently. 
\begin{lemma}\label{L2boundexplicit}
Let $f \in L^2_+(\mathbb{R})$, $t \in \mathbb{R}$, and $n \in \mathbb{N}$. We define the function $\Omega_t^{(n)} f$ associated with the $(n+1)$-th order flow as:
\begin{equation*}
	\forall z \in \mathbb{C}_+,\quad
	\Omega_t^{(n)} f(z) := \frac{1}{2 \ii \pi} I_+ \left( \left( X^* - (n+1)t L_{u_0}^n - z \mathrm{Id} \right)^{-1} f \right).
\end{equation*}
Then $\Omega_t^{(n)} f$ belongs to $L^2_+(\R)$, and $\| \Omega_t^{(n)} f\|_{L^2}\leq \| f\|_{L^2}$. In particular, for every $z\in\C_+$, we have
$$ |\Omega_t^{(n)} f(z)|\leq \frac{\| f\|_{L^2}}{2\sqrt{\pi \mathrm{Im}z}}. $$
\end{lemma}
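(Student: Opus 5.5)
The plan is to use the unitary conjugation established in Remark~\ref{remark:well-def}, which turns $\Omega_t^{(n)} f$ into the free formula applied to a unitarily transformed function. The free formula is simply the Cauchy/Paley--Wiener representation of an element of $L^2_+$. Set $V(t):=U(t)e^{\ii tL_{u_0}^{n+1}}$, which is unitary on $L^2_+(\R)$. By \eqref{eq:sanwich_n} we have
$$
X^*-(n+1)tL_{u_0}^n = V(t)^*X^*V(t),
$$
and therefore
$$
\bigl(X^*-(n+1)tL_{u_0}^n-z\mathrm{Id}\bigr)^{-1}f=V(t)^*(X^*-z\mathrm{Id})^{-1}V(t)f .
$$
Here $U(t)$ is understood with respect to the solution $u$ issued from $u_0$; this is legitimate under the standing hypotheses. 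Alternatively, the definition of $I_+$ could be checked directly via the dissipativity argument of Remark~\ref{remark:well-def}.

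Next I will compute $I_+$ through the limit $\lim_{\varepsilon\to0}\langle\cdot\mid\chi_\varepsilon\rangle$ and move $V(t)^*$ to the other side of the inner product. This gives
$$
\Omega_t^{(n)}f(z)=\lim_{\varepsilon\to0}\frac{1}{2\ii\pi}\bigl\langle (X^*-z)^{-1}g\mid V(t)\chi_\varepsilon\bigr\rangle,
\qquad g:=V(t)f .
$$
By \eqref{eq:Uchin} we have $U^*(t)\chi_\varepsilon=e^{\ii tL_{u_0}^{n+1}}\chi_\varepsilon+o(1)$, equivalently $V(t)^*\chi_\varepsilon=\chi_\varepsilon+o(1)$. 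This gives $V(t)\chi_\varepsilon=\chi_\varepsilon+o(1)$ in $L^2$, because
$$
\|V\chi_\varepsilon-\chi_\varepsilon\|=\|\chi_\varepsilon-V^*\chi_\varepsilon\|.
$$
Since $(X^*-z)^{-1}g$ is a fixed $L^2$ function, the $o(1)$ error vanishes in the limit, and we obtain
$$
\Omega_t^{(n)}f(z)=\frac{1}{2\ii\pi}I_+\bigl[(X^*-z)^{-1}g\bigr].
$$

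The computation at the start of the proof of Theorem~\ref{mainBO3}, namely the Fourier inversion/Plancherel identity read backwards, shows that $\frac{1}{2\ii\pi}I_+[(X^*-z)^{-1}g]=g(z)$ for every $g\in L^2_+$. Hence $\Omega_t^{(n)}f=V(t)f\in L^2_+(\R)$, and $\|\Omega_t^{(n)}f\|_{L^2}=\|f\|_{L^2}$, which in particular gives the stated inequality.

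The pointwise bound is the standard Hardy-space estimate. I write
$$
g(z)=\frac1{2\pi}\int_0^\infty e^{\ii z\xi}\hat g(\xi)\,d\xi
$$
and apply Cauchy--Schwarz. This yields
$$
|g(z)|\le \frac{1}{2\pi}\Bigl(\int_0^\infty e^{-2\xi\operatorname{Im}z}\,d\xi\Bigr)^{1/2}\|\hat g\|_{L^2}=\frac{1}{2\pi}\frac{1}{\sqrt{2\operatorname{Im}z}}\sqrt{2\pi}\,\|g\|_{L^2}=\frac{\|g\|_{L^2}}{2\sqrt{\pi\operatorname{Im}z}} .
$$
This uses the normalization $\|\hat g\|^2=2\pi\|g\|^2$.

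The main obstacle is justifying $V(t)\chi_\varepsilon\to\chi_\varepsilon$ rigorously, i.e.\ passing the $o(1)$ from \eqref{eq:Uchin} through the limit defining $I_+$. This must be done in a way that does not rely on $f$ being in the domain of anything special. If that step is delicate, I would instead fall back on the identity at the level of Fourier transforms for $f$ in a dense set, then extend by the uniform bound $\|(X^*-(n+1)tL^n_{u_0}-z)^{-1}\|\le 1/\operatorname{Im}z$ together with the $L^2$ isometry just obtained.
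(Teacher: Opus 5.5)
Your proposal is correct and is essentially the paper's own proof. Both arguments conjugate by $V(t)=U(t)e^{\ii tL_{u_0}^{n+1}}$ via \eqref{eq:sanwich_n}, use \eqref{eq:Uchin} to replace $V(t)\chi_\varepsilon$ by $\chi_\varepsilon$ in the limit defining $I_+$, identify $\Omega_t^{(n)}f=V(t)f$ through the Paley--Wiener representation, and finish with Cauchy--Schwarz and Plancherel. Your observation that $\|V\chi_\varepsilon-\chi_\varepsilon\|=\|\chi_\varepsilon-V^*\chi_\varepsilon\|$ makes explicit a step the paper leaves implicit, and it lets you treat every $f\in L^2_+(\R)$ directly, so the paper's preliminary restriction to a dense class followed by a density argument is unnecessary.
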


\begin{proof}
Assume first that $f \in H^s_+(\mathbb{R})$ with $s > n + \frac{1}{2}$. By formula \eqref{eq:sanwich_n} in Remark~\ref{remark:well-def}, we have, for every $z\in\C_+$,

$$
\Omega_t^{(n)}f(z)
= 
\frac1{2\ii \pi }
I_+ \big(e^{-\ii tL_{u_0}^{n+1}}U(t)^*(X^*-z)^{-1} U(t)e^{\ii tL_{u_0}^{n+1}}  f\big) ,
$$
where $U(t)$ is the unitary solution of the linear ODE $U'(t) = B_{u}^{(n)} U(t)$, $U(0) = \mathrm{Id}$. 
Moreover, by \eqref{eq:Uchin}, we have
$
e^{- \ii tL_{u_0}^{n+1}}U(t)^*\chi_\varepsilon
=
\chi_\varepsilon+o(1)$. It follows that, for every $h\in L^2_+(\R)$,
$$
I_+\bigl(e^{-\ii tL_{u_0}^{n+1}}U(t)^* h\bigr)=I_+(h).
$$
Therefore, 
\begin{align*}
\Omega_t^{(n)}f (z) = 
\frac1{2 \ii \pi}
I_+\big((X^*-z)^{-1} U(t)e^{ \ii tL_{u_0}^{n+1}}	 f\big) =
(U(t)e^{\ii tL_{u_0}^{n+1}}) f (z).
\end{align*}
Then the $L^2_+$ case
follows by a density argument and passage to the limit. 
By the fact that $U(t)$ and $e^{\ii tL_{u_0}^{n+1}}$ are unitary on $L^2_+(\R)$, $\Omega_t^{(n)}$ is unitary, 
 and we deduce that
$$
\Omega_t^{(n)}f\in L^2_+(\R)
\qquad\text{and}\qquad
\|\Omega_t^{(n)}f\|_{L^2} \le \|f\|_{L^2}.
$$
Now set
$g:=\Omega_t^{(n)}f.$
Since $g\in L^2_+(\R)$, its holomorphic extension to $\C_+$ is given by
$$
g(z)=\frac{1}{2\pi}\int_0^\infty e^{ \ii z\xi}\widehat g(\xi)\,d\xi,
\qquad z\in\C_+.
$$
By the Cauchy--Schwarz inequality and Plancherel identity, we obtain
\begin{align*}
|g(z)|
\le 
\frac{1}{2\pi \sqrt{2 \mathrm{Im}  z}}
\|\widehat g\|_{L^2(0,\infty)} = \frac{\|g\|_{L^2}}{2\sqrt{\pi\,\mathrm{Im} z}}
\le
\frac{\|f\|_{L^2}}{2\sqrt{\pi\,\mathrm{Im}  z}}.
\end{align*}
This completes the proof.
\end{proof}

The next proposition shows that the resolvent of the Lax operator preserves $\operatorname{Dom}(X^*)$.

\begin{proposition}[Resolvent invariance of $D(X^*)$]\label{lem:resolvent-dxstar}
Let $f\in \text{Dom}(X^*)$ and let $z\in \C\setminus \sigma(L_{u_0})$. Define $g:=(L_{u_0}-z)^{-1}f.$
Then $g\in \text{Dom}(X^*)$.
\end{proposition}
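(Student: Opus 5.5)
We will show that $g=(L_{u_0}-z)^{-1}f$ belongs to $\mathrm{Dom}(X^*)$ by working on the Fourier side. Recall that $\mathrm{Dom}(X^*)$ is the set of $h\in L^2_+$ such that $\widehat h|_{(0,\infty)}\in H^1(0,\infty)$. Since $g\in \mathrm{Dom}(L_{u_0})=H^1_+$, we already know $\widehat g\in L^2$ and $\xi\widehat g\in L^2$. It therefore suffices to show that the distributional derivative $\frac{d}{d\xi}\widehat g$ on $(0,\infty)$ belongs to $L^2(0,\infty)$.

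The plan is to derive a commutator identity for $X^*$ and the resolvent. From the proof of Proposition~\ref{Prop1a} we have, on suitable domains,
$$[X^*,L_{u_0}]h=\ii h-\frac{\ii}{2\pi}I_+(h)\,\Pi u_0 .$$
Formally this gives
$$X^*(L_{u_0}-z)^{-1}f=(L_{u_0}-z)^{-1}X^*f-(L_{u_0}-z)^{-1}[X^*,L_{u_0}](L_{u_0}-z)^{-1}f .$$
The right-hand side is a well-defined element of $L^2_+$: $X^*f\in L^2_+$, $I_+(g)$ makes sense once we know $g\in\mathrm{Dom}(X^*)$ (or $\widehat g$ continuous at $0^+$, which holds since $\widehat g\in L^1$ near $0$ when $g\in H^1$), and $\Pi u_0\in L^2_+$. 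So the candidate is
$$h:=(L_{u_0}-z)^{-1}\Big(X^*f-\ii g+\frac{\ii}{2\pi}I_+(g)\Pi u_0\Big).$$
The goal is to prove rigorously that $X^*g=h$.

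To make this rigorous without assuming $g\in\mathrm{Dom}(X^*)$, we use duality. Since $X^*$ is closed and densely defined, $g\in\mathrm{Dom}(X^*)$ with $X^*g=h$ follows if
$$\langle g\mid X\varphi\rangle=\langle h\mid \varphi\rangle \quad\text{for all }\varphi\text{ in a core of }X .$$
Here $X$ is multiplication by $x$ restricted to $L^2_+$ (its domain is $\{\varphi\in L^2_+: x\varphi\in L^2_+\}$). A convenient core is $\varphi=(L_{u_0}-\bar z)\psi$ with $\psi$ ranging over Schwartz-type elements of $L^2_+$ whose Fourier transforms are $C_c^\infty(0,\infty)$. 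Because $L_{u_0}-\bar z$ is onto and $\psi$ is nice, these $\varphi$ span a dense set, and we can check $x\varphi\in L^2$ using $u_0\in H^s$ with $s>1/2$.

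With this choice, $\langle g\mid X\varphi\rangle=\langle f\mid (L_{u_0}-\bar z)^{-1}X(L_{u_0}-\bar z)\psi\rangle$. We then commute $X$ past $L_{u_0}-\bar z$ using the adjoint identity $[L_{u_0},X]=-\ii\,\mathrm{Id}+\frac{\ii}{2\pi}\langle\cdot\mid\Pi u_0\rangle\mathbf 1$. We make sense of the rank-one term through $\chi_\varepsilon\to\mathbf 1$, using $\langle\psi\mid\Pi u_0\rangle$ and $I_+$. Moving $X$ onto $f$ then yields $\langle X^*f\mid\cdots\rangle$ plus explicit terms, which reassemble into $\langle h\mid\varphi\rangle$.

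\textbf{Main obstacle.} The rank-one term is the delicate point. The function $\mathbf 1$ is not in $L^2_+$, so the commutator $[X,T_{u_0}]$ must be handled through the regularization $\chi_\varepsilon$ and the limit $\varepsilon\to0^+$, exactly as $I_+$ is defined in the paper. We also need to verify that the test space is a genuine core for $X$. If this becomes messy, the fallback is a direct Fourier computation. In Fourier variables, $(\xi-z)\widehat g-\widehat{\Pi(u_0 g)}=\widehat f$. Differentiating in $\xi$ on $(0,\infty)$ gives
$$(\xi-z)\partial_\xi\widehat g=\partial_\xi\widehat f-\widehat g+\partial_\xi\widehat{\Pi(u_0g)}.$$
Writing $\widehat{u_0g}=\frac1{2\pi}\widehat{u_0}*\widehat g$ and using $\widehat g\in H^1$ away from $0$ together with the jump of $\widehat g$ at $0$ (which produces precisely the term $I_+(g)\widehat{u_0}$), we can bootstrap to get $\partial_\xi\widehat g\in L^2$ for large $|\xi|$. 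Near the finite values of $\xi$ (in particular where $\xi-z$ may be small when $z$ is real), we instead use the invertibility of $L_{u_0}-z$.
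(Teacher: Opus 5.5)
Your duality argument breaks down at the choice of test space. For $\widehat\psi\in C_c^\infty(0,\infty)$ one has $I_+\bigl((L_{u_0}-\bar z)\psi\bigr)=-I_+(T_{u_0}\psi)=-\langle\psi\mid\Pi u_0\rangle$, which is generically nonzero. In that case $\widehat\varphi$ jumps at $\xi=0$, so $x\varphi\notin L^2$ and $\varphi=(L_{u_0}-\bar z)\psi\notin\mathrm{Dom}(X)$. Concretely, $x\,\Pi(u_0\psi)-\Pi(xu_0\psi)$ is the constant $\frac{\ii}{2\pi}\langle\psi\mid\Pi u_0\rangle$, so $u_0\in H^s$ does not help. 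The pairing $\langle g\mid X\varphi\rangle$ is then undefined, and no $\chi_\varepsilon$-regularization of the commutator can repair the left-hand side of your identity. Suppose you restrict to $\langle\psi\mid\Pi u_0\rangle=0$ to force $\varphi\in\mathrm{Dom}(X)$. Then $\langle\varphi\mid(L_{u_0}-z)^{-1}\Pi u_0\rangle=\langle\psi\mid\Pi u_0\rangle=0$, so the test space is not even dense in $L^2_+$, let alone a core for $X$. Density in $L^2_+$ would not suffice anyway: a core must be dense in the graph norm. Note that the rank-one part $\frac{\ii}{2\pi}I_+(g)(L_{u_0}-z)^{-1}\Pi u_0$ of your candidate $h$ pairs to zero with every such $\varphi$. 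The test space is blind to exactly the boundary value $\widehat g(0^+)$ that the proof must control.

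The Fourier fallback has the right heuristic: the jump of $\widehat g$ at $0$ generates $\frac1{2\pi}I_+(g)\widehat{u_0}$. As written, though, it is circular, since you differentiate an identity for $\widehat g$ and invoke $\widehat g\in H^1$ away from $0$, which is the conclusion. Also, the convolution couples all frequencies, so there is no separate large-$\xi$/finite-$\xi$ treatment, and $\xi-z$ never degenerates on $(0,\infty)$ because $[0,\infty)\subset\sigma(L_{u_0})$. The missing ingredients are two. First, use difference quotients rather than derivatives. Second, use the continuity of $\widehat g$ on $[0,\infty)$. That continuity follows from $(\xi-z)\widehat g=\widehat f+\widehat{u_0g}$ with $\widehat f\in H^1(0,\infty)$ and $u_0g\in L^1$; it does not follow from $\widehat g\in L^1$ near $0$, which gives no boundary value. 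The paper applies $\tau_\eta$ to $\widehat g=h_z+\mathbb A^{(z)}_{u_0}\widehat g$ and computes $[\tau_\eta,\mathbb A^{(z)}_v]$, whose boundary piece tends to $\widehat v(\xi)\widehat g(0^+)/(2\pi(\xi-z))$. It then splits off a compactly supported $u_0^R$ (for which $\partial_\xi\widehat{u_0^R}\in L^2$) and inverts $\mathrm{Id}-\mathbb A^{(z)}_{u_0-u_0^R}$ by smallness. Your commutator identity can in fact be made rigorous this way without any truncation. With $S(\eta)^*=e^{-\ii\eta X^*}$, one has $S(\eta)^*D=(D+\eta)S(\eta)^*$ on $H^1_+$. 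Moreover $E_\eta:=S(\eta)^*T_{u_0}g-T_{u_0}S(\eta)^*g$ satisfies $\eta^{-1}E_\eta\to\frac1{2\pi}I_+(g)\Pi u_0$ in $L^2_+$. Hence
\[
\frac{S(\eta)^*g-g}{\eta}=(L_{u_0}-z)^{-1}\Bigl(\frac{S(\eta)^*f-f}{\eta}-S(\eta)^*g+\frac{E_\eta}{\eta}\Bigr)
\]
converges as $\eta\to0^+$, because $(L_{u_0}-z)^{-1}$ is bounded. This gives $g\in\mathrm{Dom}(X^*)$ and $X^*g=h$, exactly your candidate.
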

\begin{proof}
By the definition of $\text{Dom}(X^*)$,
it is enough to prove that $\widehat g|_{(0,\infty)}\in H^1(0,\infty)$.
Taking the Fourier transform of
$$
(L_{u_0}-z)g=(D-T_{u_0}-z)g=f,
$$
we obtain, for every $\xi>0$,
\begin{equation}\label{eq:g-h_u0}
(\xi-z)\widehat g(\xi)=\widehat f(\xi) + 
\widehat{u_0g}(\xi).
\end{equation}
Let us rewrite it as 
\begin{equation}\label{eq:g-h_A}
\widehat g(\xi)= \frac{\widehat f(\xi)}{\xi-z} + \frac{\widehat{u_0g}(\xi)}{\xi-z} =h_z(\xi)+\mathbb{A}_{u_0}^{(z)}\widehat g(\xi),	\qquad  \xi>0,
\end{equation}
where
$$
h_z(\xi):=\frac{\widehat f(\xi)}{\xi-z},\qquad
\mathbb{A}_v^{(z)}\psi(\xi):=
\frac{1}{2\pi(\xi-z)}
\int_0^\infty \widehat v(\xi-\zeta)\psi(\zeta)\,d\zeta,
\quad \xi>0.
$$
Since $f\in D(X^*)$, we know that $\widehat f|_{(0,\infty)}\in H^1(0,\infty)$.  Because $z\notin\mathbb R$, we thus have
$h_z\in H^1(0,\infty)$. In particular, $h_z$ is continuous on $[0,\infty)$. Moreover, since $u_0,g\in L^2(\mathbb R)$, we have $u_0g\in L^1(\mathbb R)$, so $\widehat{u_0g}$ is continuous on $\mathbb R$. It follows from \eqref{eq:g-h_A} that $\widehat g$ is continuous on $[0,\infty)$.

We also note that the operator $\mathbb{A}_v^{(z)}$ is bounded on $L^2(0,\infty)$. Indeed, let $\widetilde\psi$ denote the extension of $\psi$ by $0$ to the negative half-line. Then,
by Young's inequality,
$$
\|\mathbb{A}_v^{(z)}\psi\|_{L^2(0,\infty)}
\le
\frac{1}{2\pi}\Bigl\|\frac{1}{\cdot-z}\Bigr\|_{L^2(0,\infty)}
\|\widehat v*\widetilde\psi\|_{L^\infty(\mathbb R)}
\le
C_z\|v\|_{L^2(\mathbb R)}\|\psi\|_{L^2(0,\infty)},
$$
for some constant $C_z>0$ depending only on $z$.

To prove that $\widehat g|_{(0,\infty)}$ belongs to $H^1(0, \infty)$, it suffices to show that the difference quotient $\tau_\eta \widehat g$ converges in $L^2 (0, \infty)$ as $\eta \to 0$, where
$$
\tau_\eta\psi(\xi):=\frac{\psi(\xi+\eta)-\psi(\xi)}{\eta},
\qquad \eta>0.
$$
Now, let us introduce the truncated function 
$$
	u_0^R:=\mathbf 1_{|x|<R}u_0,
$$
and choose $R>0$ large enough such that
$ 
C_z\|u_0-u_0^R\|_{L^2}<\frac12$. Then $\mathrm{Id}-\mathbb{A}_{u_0-u_0^R}^{(z)}$ is invertible on $L^2(0,\infty)$, and equation \eqref{eq:g-h_A} can be rewritten as 
\begin{equation*}
\widehat g-\mathbb{A}_{u_0-u_0^R}^{(z)}\widehat g
=
h_z+\mathbb{A}_{u_0^R}^{(z)}\widehat g.
\end{equation*}
Applying $\tau_\eta$ to this identity, we obtain
\begin{equation}\label{eq:taugdeomp}
	\tau_\eta\widehat g
	-
	\mathbb{A}_{u_0-u_0^R}^{(z)}(\tau_\eta\widehat g)
	=
	\tau_\eta h_z
	+
	\tau_\eta\!\bigl(\mathbb{A}_{u_0^R}^{(z)}\widehat g\bigr)
	+
	[\tau_\eta,\mathbb{A}_{u_0-u_0^R}^{(z)}]\widehat g.
\end{equation}
We now analyze the three terms on the right-hand side. First, since $h_z\in H^1(0,\infty)$,  we have
\begin{equation}\label{eq:convhz}
\tau_\eta h_z  \xrightarrow{\eta\to0^+}
h_z' \qquad\text{in }L^2(0,\infty).
\end{equation}
Second, since $u_{0}^{R}$ has compact support, we have
$
x\,u_{0}^R\in L^2(\mathbb R),$
hence
$
(\widehat{u_{0}^R})'=-i\,\widehat{x\, u_{0}^R}\in L^2(\mathbb R).$ Therefore,
\begin{align*}
\partial_\xi\!\bigl(\mathbb{A}_{u_0^R}^{(z)}\widehat g\bigr)(\xi)
& =
-\frac{\mathbb{A}_{u_0^R}^{(z)}\widehat g(\xi)}{\xi-z}
+
\frac{1}{2\pi(\xi-z)}
\int_0^\infty (\widehat{u_0^R})'(\xi-\zeta)\widehat g(\zeta)\,d\zeta \\
& = -\frac{\mathbb{A}_{u_0^R}^{(z)}\widehat g(\xi)}{\xi-z}
- i \mathbb{A}_{x u_0^R}^{(z)}\widehat g \;
 \in L^2(0,\infty).
\end{align*}
Hence
$\mathbb{A}_{u_0^R}^{(z)}\widehat g\in H^1(0,\infty)$,
and therefore
\begin{equation}\label{eq:convtaueta}
\tau_\eta\!\bigl(\mathbb{A}_{u_0^R}^{(z)}\widehat g\bigr)
 \xrightarrow{\eta\to0^+}
-\frac{\mathbb{A}_{u_0^R}^{(z)}\widehat g(\xi)}{\xi-z}
- i \mathbb{A}_{x u_0^R}^{(z)}\widehat g 
\qquad\text{in }L^2(0,\infty).
\end{equation}
Third, we treat the commutator term.
Set
$
H(\xi):=\int_0^\infty \widehat v(\xi-\zeta)\psi(\zeta)\,d\zeta
$. Using the definition of $\tau_\eta$ and $\mathbb{A}_v^{(z)}$, we compute
\begin{align*}
[\tau_\eta,\mathbb{A}_v^{(z)}]\psi
& =
\tau_\eta\!\bigl(\mathbb{A}_v^{(z)}\psi\bigr)-\mathbb{A}_v^{(z)}(\tau_\eta\psi)\\
& = \frac{1}{2\pi\eta}
\left(
\frac{H(\xi+\eta)}{\xi+\eta-z}
-
\frac{H(\xi)}{\xi-z}
\right) - \frac{1}{2\pi(\xi-z)\eta}
\left[
\int_0^\infty \widehat v(\xi-\zeta)\psi(\zeta+\eta)\,d\zeta
-
H(\xi)
\right]\\
& = \frac{1}{2\pi\eta}
\frac{H(\xi+\eta)}{\xi+\eta-z} - \frac{1}{2\pi(\xi-z)\eta}
\int_0^\infty \widehat v(\xi-\zeta)\psi(\zeta+\eta)\,d\zeta.
\end{align*}
Making the change of variable $s=\zeta+\eta$, we get
\begin{align*}
	\int_0^\infty \widehat v(\xi-\zeta)\psi(\zeta+\eta)\,d\zeta
	&=
	\int_\eta^\infty \widehat v(\xi+\eta-s)\psi(s)\,ds \\
	&=
	I(\xi+\eta)-\int_0^\eta \widehat v(\xi+\eta-s)\psi(s)\,ds.
\end{align*}
Therefore, for every $v\in L^2(\mathbb R)$ and every continuous
$\psi$ on $[0,\infty)$, we have
\begin{align}\label{eq:taucomdecom}
[\tau_\eta,\mathbb{A}_v^{(z)}]\psi(\xi)
&=
\frac{H(\xi+\eta)}{2\pi\eta}
	\left(
	\frac{1}{\xi+\eta-z}-\frac{1}{\xi-z}
	\right)
	+
\frac{1}{2\pi(\xi-z)}
	\int_0^\eta \widehat v(\xi+\eta-s)\psi(s)\,\frac{ds}{\eta} \nonumber\\
& = -\frac{\mathbb{A}_v^{(z)}\psi(\xi+\eta)}{\xi-z}
+
\frac{1}{2\pi(\xi-z)}
\int_0^\eta
\widehat v(\xi+\eta-\zeta)\psi(\zeta)\,\frac{d\zeta}{\eta}.
\end{align}
where we used 
$
\frac{1}{\xi+\eta-z}-\frac{1}{\xi-z}
=
-\frac{\eta}{(\xi+\eta-z)(\xi-z)}.$

We come back to  \eqref{eq:taugdeomp}. 
Since $\widehat g$ is continuous on $[0,\infty)$, we have
$$
-\frac{\mathbb A_v^{(z)}\widehat g(\xi+\eta)}{\xi-z} 
 \xrightarrow{\eta\to0^+} -\frac{\mathbb A_v^{(z)}\widehat g}{\xi-z}, \qquad \text{in}\; L^2(0,\infty).
$$
For the second term on the right-hand side of \eqref{eq:taucomdecom}, we compute
\begin{align*}
 & \left\|\frac{1}{2\pi(\cdot-z)}
\int_0^\eta
\widehat v(\cdot+\eta-\zeta)\widehat g(\zeta)\,\frac{d\zeta}{\eta}-\frac{\widehat v(\cdot) \,\widehat g(0^+)}{2\pi(\cdot-z)}\right\|_{L^2(0,\infty)}\\ & \le
 C_z\frac1\eta\int_0^\eta
\|\widehat g(\zeta)\,
\widehat v(\cdot+\eta-\zeta)-\widehat g(0^+)\widehat v(\cdot)\|_{L^2(\mathbb R)}
\,d\zeta \\
& \leq C_z \sup_{0\le \zeta\le \eta} \|\widehat g(\zeta)\widehat v(\cdot+\eta-\zeta)-\widehat g(0^+)\widehat v(\cdot)\|_{L^2(\mathbb R)}
\to 0, \qquad \eta\to0^+.
\end{align*}
Consequently,
$$
[\tau_\eta, \mathbb A_v^{(z)}]\widehat g
 \xrightarrow{\eta\to0^+}
-\frac{\mathbb A_v^{(z)}\widehat g(\xi)}{\xi-z}
+
\frac{\widehat v(\xi)\widehat g(0^+)}{2\pi(\xi-z)},
\qquad\text{in }L^2(0,\infty).
$$
Applying this with $v=u_0-u_0^R$, we get
\begin{equation}\label{eq:convtaucommu}
[\tau_\eta,\mathbb{A}_{u_0-u_0^R}^{(z)}]\widehat g
 \xrightarrow{\eta\to0^+}
-\frac{\mathbb A_{u_0-u_0^R}^{(z)}\widehat g(\xi)}{\xi-z}
+
\frac{ (\widehat u_0- \widehat u_0^R)(\xi)\widehat g(0^+)}{2\pi(\xi-z)},
\qquad\text{in }L^2(0,\infty).
\end{equation}
Combining \eqref{eq:taugdeomp}--\eqref{eq:convtaucommu} with the invertibility of $\mathrm{Id}-\mathbb{A}_{u_0-u_0^R}^{(z)}$ on $L^2(0,\infty)$, it follows that $\tau_\eta\widehat g$ possesses a limit in $L^2(0,\infty)$ as $\eta\to0^+$, defined as:
$$ (\mathrm{Id}-\mathbb{A}_{u_0-u_0^R}^{(z)})^{-1} \Big(h_z' -\frac{\mathbb{A}_{u_0^R}^{(z)}\widehat g(\xi)}{\xi-z}
- i \mathbb{A}_{x u_0^R}^{(z)}\widehat g (\xi) -\frac{\mathbb A_{u_0-u_0^R}^{(z)}\widehat g(\xi)}{\xi-z}
+
\frac{ (\widehat u_0- \widehat u_0^R)(\xi)\widehat g(0^+)}{2\pi(\xi-z)}\Big).
$$
Therefore,
$
\widehat g|_{(0,\infty)}\in H^1(0,\infty)$ and we conclude that $g\in \text{Dom}(X^*)$.
\end{proof}

The next lemma is a density result for the shifted operator
$(n+1)L_{u_0}^n+c$.	
\begin{lemma}\label{lem:density-Ac-DXstar}
Let $
	A_c:=(n+1)L_{u_0}^n+c. $
Then
$$
\overline{A_c\bigl(\operatorname{Dom}(X^*)\cap \operatorname{Dom}(A_c)\bigr)}
=
\ker(A_c)^\perp.
$$
\end{lemma}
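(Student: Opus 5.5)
The strategy is to combine two facts. For a self-adjoint operator, the range is dense in the orthogonal complement of its kernel. In addition, $\operatorname{Dom}(X^*)\cap\operatorname{Dom}(A_c)$ is a core for $A_c$, and Proposition~\ref{lem:resolvent-dxstar} will supply that core.

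First, $A_c=(n+1)L_{u_0}^n+c$ is self-adjoint on $L^2_+$, with domain $\operatorname{Dom}(L_{u_0}^n)$, by the functional calculus, since $L_{u_0}$ is self-adjoint. Hence
$$
\overline{\operatorname{Ran}(A_c)}=\ker(A_c^*)^\perp=\ker(A_c)^\perp .
$$
The inclusion ``$\subset$'' in the statement is therefore immediate, because $A_c(\operatorname{Dom}(X^*)\cap\operatorname{Dom}(A_c))\subset\operatorname{Ran}(A_c)$.

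For the reverse inclusion, it suffices to show that every $A_c h$ with $h\in\operatorname{Dom}(A_c)$ is a limit of elements $A_c h_k$ with $h_k\in\operatorname{Dom}(X^*)\cap\operatorname{Dom}(A_c)$. Fix $z\in\C\setminus\R$ and set
$$
\mathcal D:=(L_{u_0}-z)^{-n}\operatorname{Dom}(X^*).
$$
Applying Proposition~\ref{lem:resolvent-dxstar} $n$ times shows that $\mathcal D\subset\operatorname{Dom}(X^*)$, and clearly $\mathcal D\subset\operatorname{Dom}(L_{u_0}^n)=\operatorname{Dom}(A_c)$.

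Now take $h\in\operatorname{Dom}(A_c)$ and write $h=(L_{u_0}-z)^{-n}\phi$ with $\phi\in L^2_+$. Since $\operatorname{Dom}(X^*)$ is dense in $L^2_+$ (it contains the functions whose Fourier transforms are smooth and compactly supported in $[0,\infty)$), we can choose $\phi_k\in\operatorname{Dom}(X^*)$ with $\phi_k\to\phi$. Set $h_k:=(L_{u_0}-z)^{-n}\phi_k\in\mathcal D$. Then
$$
A_c h_k=\bigl((n+1)L_{u_0}^n+c\bigr)(L_{u_0}-z)^{-n}\phi_k .
$$
The operator $\bigl((n+1)L_{u_0}^n+c\bigr)(L_{u_0}-z)^{-n}$ is bounded by the functional calculus, because the function $\lambda\mapsto((n+1)\lambda^n+c)(\lambda-z)^{-n}$ is bounded on $\R$. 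Consequently $A_c h_k\to A_c h$. This gives
$$
\operatorname{Ran}(A_c)\subset\overline{A_c(\operatorname{Dom}(X^*)\cap\operatorname{Dom}(A_c))},
$$
and taking closures yields the claimed equality.

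The main point needing care is the iteration of Proposition~\ref{lem:resolvent-dxstar}. The proposition is stated for $z\notin\sigma(L_{u_0})$, and its proof only uses $z\notin\R$ together with $u_0\in L^2$, so it applies directly to our nonreal $z$. Each application maps $\operatorname{Dom}(X^*)$ into itself, so $n$ applications are legitimate. The remaining checks are that $(L_{u_0}-z)^{-n}$ maps $L^2_+$ onto $\operatorname{Dom}(L_{u_0}^n)$, and that the boundedness of the operator above holds. Both are standard spectral-calculus facts for the self-adjoint operator $L_{u_0}$.
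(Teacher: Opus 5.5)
Your proof is correct and uses the same ingredients as the paper's: resolvent powers of $L_{u_0}$ preserve $\operatorname{Dom}(X^*)$ by Proposition~\ref{lem:resolvent-dxstar}, $A_c$ composed with such a power is bounded by the functional calculus, and $\operatorname{Dom}(X^*)$ is dense. The difference is only in packaging. The paper argues by duality: it takes $h\perp A_c(\operatorname{Dom}(X^*)\cap\operatorname{Dom}(A_c))$, tests against the real regularizers $(\mathrm{Id}+\varepsilon L_{u_0})^{-n}$, and lets $\varepsilon\to0^+$ using the closedness of $A_c$. You instead fix a nonreal $z$, which the hypothesis $z\notin\sigma(L_{u_0})$ covers directly, approximate $A_c h$ itself, and then invoke $\overline{\operatorname{Ran}A_c}=\ker(A_c)^\perp$.
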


\begin{proof}
It suffices to show
$$
[A_c\bigl(D(X^*)\cap D(A_c)\bigr)]^\perp=\ker(A_c).
$$
The inclusion
$
\ker(A_c)\subset
\bigl[A_c\bigl(\operatorname{Dom}(X^*)\cap \operatorname{Dom}(A_c)\bigr)\bigr]^\perp
$
is immediate from the self-adjointness of $A_c$.
Conversely, let $h\in [A_c\bigl(D(X^*)\cap D(A_c)\bigr)]^\perp$. We show that $h\in \ker(A_c)$.
Applying Proposition~\ref{lem:resolvent-dxstar} with
$
z=-\frac{1}{\varepsilon}$, where \(\varepsilon>0\) is chosen so that
$$
1+\varepsilon\lambda_0\ge \frac12,
\qquad \lambda_0:=\inf\sigma(L_{u_0}),
$$
it then follows that
$$
(\mathrm{Id}+\varepsilon L_{u_0})^{-1}
:\operatorname{Dom}(X^*)\to \operatorname{Dom}(X^*),
$$ 
hence, by iteration,
$$
(\mathrm{Id}+\varepsilon L_{u_0})^{-n}\bigl(\operatorname{Dom}(X^*)\bigr)
\subset
\operatorname{Dom}(X^*).
$$
Moreover, by the spectral theorem, the operator
$
A_c(\mathrm{Id}+\varepsilon L_{u_0})^{-n}
$
is bounded and self-adjoint on $L^2_+(\mathbb R)$, since its spectral multiplier
$
\lambda\longmapsto ((n+1)\lambda^n+c ) (1+\varepsilon\lambda)^{-n}
$
is bounded and real-valued. In particular,
$$
(\mathrm{Id}+\varepsilon L_{u_0})^{-n}f\in \operatorname{Dom}(A_c), \quad \text{for every } f\in L^2_+(\mathbb R)
$$
Therefore, for every
$
f\in \operatorname{Dom}(X^*),
$
we have
$$
(\mathrm{Id}+\varepsilon L_{u_0})^{-n}f\in
\operatorname{Dom}(X^*)\cap \operatorname{Dom}(A_c).
$$
Since $h$ is orthogonal to
$
A_c\bigl(\operatorname{Dom}(X^*)\cap \operatorname{Dom}(A_c)\bigr),$
it follows that
$$
0=\langle h,\,
A_c(\mathrm{Id}+\varepsilon L_{u_0})^{-n}f\rangle
=
\langle A_c(\mathrm{Id}+\varepsilon L_{u_0})^{-n}h,\,
f\rangle
$$
for every $f\in \operatorname{Dom}(X^*)$.
Because $\operatorname{Dom}(X^*)$ is dense in $L^2_+(\mathbb R)$, we conclude that
$$
A_c(\mathrm{Id}+\varepsilon L_{u_0})^{-n}h=0.
$$
Finally, by combining the convergence 
$
(\mathrm{Id}+\varepsilon L_{u_0})^{-n}h \to  h$ in $L^2_+(\mathbb R)$ as $\varepsilon\to0^+,$
with the closedness of $A_c$, we obtain
$$
h\in \operatorname{Dom}(A_c)
\qquad\text{and}\qquad
A_ch=0.
$$
This implies
$
h\in \ker(A_c).$
Therefore, we have
$
\bigl[A_c\bigl(\operatorname{Dom}(X^*)\cap \operatorname{Dom}(A_c)\bigr)\bigr]^\perp=\ker(A_c).
$
The desired statement follows immediately by taking orthogonal complements.
\end{proof}

The next proposition is the main vanishing statement for the asymptotic analysis. 
It shows that, after shifting by the speed $c$, the quantity $\Omega_t^{(n)}f(z+ct)$ tends to zero whenever $f$ is orthogonal to the kernel of the shifted operator.
\begin{proposition}\label{prop:general-kernel-shift}
	Let $c\in \R$ and $f\in L^2_+(\R)$ be such that
	$
	f\perp \ker\bigl((n+1)L_{u_0}^n+c\bigr).$
	Then, for every $z\in \C_+$,
	$$
	\Omega_t^{(n)}f(z+ct) \longrightarrow 0,
	\qquad \text{as } t\to \infty.$$
\end{proposition}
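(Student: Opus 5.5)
By Lemma~\ref{L2boundexplicit}, $\|\Omega_t^{(n)}\|\le 1$ uniformly in $t$, and $|\Omega_t^{(n)}g(z+ct)|\le \|g\|_{L^2}/(2\sqrt{\pi\,\mathrm{Im} z})$ uniformly in $t$ for fixed $z\in\C_+$, since $\mathrm{Im}(z+ct)=\mathrm{Im} z$. So the map $f\mapsto \Omega_t^{(n)}f(z+ct)$ is a family of linear functionals with norms bounded independently of $t$. By Lemma~\ref{lem:density-Ac-DXstar}, the set $A_c\bigl(\operatorname{Dom}(X^*)\cap\operatorname{Dom}(A_c)\bigr)$ is dense in $\ker(A_c)^\perp$. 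A standard $\varepsilon/3$ argument therefore reduces the statement to proving the convergence for $f=A_c g$ with $g\in \operatorname{Dom}(X^*)\cap \operatorname{Dom}(A_c)$.

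For such $f$, the plan is to use the resolvent identity. Set $M_t:=X^*-(n+1)tL_{u_0}^n$ and $w:=z+ct$. Then
$$M_t-w=(X^*-z)-t\bigl((n+1)L_{u_0}^n+c\bigr)=(X^*-z)-tA_c,$$
so on $\operatorname{Dom}(X^*)\cap\operatorname{Dom}(A_c)$ we have $tA_c g=(X^*-z)g-(M_t-w)g$. Applying $(M_t-w)^{-1}$ gives
$$(M_t-w)^{-1}A_c g=\frac1t\Bigl((M_t-w)^{-1}(X^*-z)g-g\Bigr).$$
Taking $\frac{1}{2\ii\pi}I_+(\cdot)$ yields
$$\Omega_t^{(n)}(A_cg)(z+ct)=\frac1t\Bigl(\Omega_t^{(n)}\bigl((X^*-z)g\bigr)(z+ct)-\frac{1}{2\ii\pi}I_+(g)\Bigr).$$
The first bracketed term is bounded by $\|(X^*-z)g\|_{L^2}/(2\sqrt{\pi\,\mathrm{Im} z})$, and $I_+(g)$ is finite because $g\in\operatorname{Dom}(X^*)$. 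Hence the whole expression is $O(1/t)\to 0$.

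The delicate point is the domain bookkeeping. One has to check that $M_t-w$ maps $\operatorname{Dom}(X^*)\cap\operatorname{Dom}(A_c)$ into $L^2_+$ and that this intersection lies in the domain of the maximally dissipative operator $-\ii M_t$ from Remark~\ref{remark:well-def}, so that $(M_t-w)^{-1}(M_t-w)g=g$. The operator $M_t$ is defined there through conjugation by $V(t)$, with its own domain. If the inclusion is not automatic, I would use instead the regularized vectors $(\mathrm{Id}+\varepsilon L_{u_0})^{-n}f$ from the proof of Lemma~\ref{lem:density-Ac-DXstar}, and appeal to Proposition~\ref{lem:resolvent-dxstar} to stay inside $\operatorname{Dom}(X^*)$. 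I would also verify that $I_+$ applied to the resolvent is compatible with $I_+(g)$ for $g\in\operatorname{Dom}(X^*)$, which follows directly from the definition $I_+(g)=\widehat g(0^+)$. I expect this domain step to be the main obstacle; the rest is the density-plus-uniform-bound argument above.
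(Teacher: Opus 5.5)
Your proposal matches the paper's proof. It uses the same identity $X^*-(n+1)tL_{u_0}^n-(z+ct)=(X^*-z)-tA_c$ to get the $O(1/t)$ bound for $f=A_cg$ with $g\in\operatorname{Dom}(X^*)\cap\operatorname{Dom}(A_c)$, and then the same extension to all $f\perp\ker(A_c)$ via Lemma~\ref{lem:density-Ac-DXstar} and the $t$-uniform bound of Lemma~\ref{L2boundexplicit}. The domain point you flag, that the resolvent inverts $X^*-tA_c-z$ on $\operatorname{Dom}(X^*)\cap\operatorname{Dom}(A_c)$, is used in the paper without further justification, so your argument is at the same level of rigor.
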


\begin{proof}
Set $A_c := (n+1)L_{u_0}^n + c.$ We first treat the case where
$$f=A_c g \qquad \text{with} \quad g\in  \operatorname{Dom}(X^*) \cap  \operatorname{Dom}(A_c).$$ Using the identity
$$
A_cg
=
-\frac1t\bigl(X^*-tA_c-z\bigr)g
+\frac1t(X^*-z)g,$$
we obtain
\begin{align*}
\Omega_t^{(n)}f(z+ct)
=
		\frac{1}{2\ii\pi}
		I_+\!\left(
		\bigl(X^*-tA_c-z\bigr)^{-1}A_cg
		\right) 
		= -\frac{1}{2\ii\pi t}I_+(g)
		+ \frac1t\,\Omega_t^{(n)}\bigl((X^*-z)g\bigr)(z+ct).
\end{align*}
Since $g\in  \operatorname{Dom}(X^*)$, the quantity $I_+(g)=\widehat g(0^+)$ is well-defined and $(X^*-z)g\in L^2_+(\R)$. Hence, by Lemma~\ref{L2boundexplicit},
$$
\left|\Omega_t^{(n)}\bigl((X^*-z)g\bigr)(z+ct)\right|
\le
\frac{\|(X^*-z)g\|_{L^2}}{2\sqrt{\pi\,\operatorname{Im} z}}.
$$
It follows that
$$
\left|\Omega_t^{(n)}f(z+ct)\right|
\le
\frac{|I_+(g)|}{2\pi\,t}
+
\frac{\|(X^*-z)g\|_{L^2}}{2t\sqrt{\pi\,\operatorname{Im} z}},
$$
and therefore
$$
	\Omega_t^{(n)}f(z+ct)\longrightarrow 0
	\qquad\text{as }t\to\infty.$$
Now, for the case that
$
f\perp \ker(A_c).$
the proof is completed by using Lemma \ref{L2boundexplicit} and the density result in Lemma~\ref{lem:density-Ac-DXstar}.
\end{proof}	

The proposition below gives the soliton limit, which is the key ingredient in the proof of the traveling-wave classification theorem. 
\begin{proposition}[Soliton limit]\label{prop:Solitonlimit}
Let $\lambda$ be a simple negative eigenvalue of the Lax operator $L_{u_0}$ on the Hardy space $L^2_+(\R)$, with normalized eigenfunction $\phi$. Then, for every $z\in\C_+$,
\begin{align}\label{eq:convergPi}
\Pi u\bigl(t,z-(n+1)\lambda^n t\bigr)
\longrightarrow
\frac{\ii}{z-\langle X^*\phi,\phi\rangle},
\qquad\text{as }t\to \infty.
\end{align}
\end{proposition}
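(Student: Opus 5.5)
The plan is to apply the explicit formula of Theorem~\ref{mainBOhier} in the shifted frame, using Proposition~\ref{prop:general-kernel-shift} with the speed $c:=-(n+1)\lambda^n$. Then $z-(n+1)\lambda^n t=z+ct$, and
$$A_c=(n+1)L_{u_0}^n+c=(n+1)\bigl(L_{u_0}^n-\lambda^n\bigr).$$
By Theorem~\ref{mainBOhier}, $\Pi u(t,z+ct)=\Omega_t^{(n)}\Pi u_0(z+ct)$, and
$$\bigl(X^*-(n+1)tL_{u_0}^n-(z+ct)\bigr)=\bigl(X^*-tA_c-z\bigr).$$

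\textbf{Step 1: identify $\ker A_c$.} Its elements are eigenvectors of $L_{u_0}$ with eigenvalue $\mu$ such that $\mu^n=\lambda^n$, i.e.\ $\mu=\lambda$, or $\mu=-\lambda>0$ when $n$ is even. I will invoke the known fact that $L_{u_0}$ has no nonnegative eigenvalues on the line: its spectrum consists of $[0,\infty)$ plus finitely many simple negative eigenvalues (Wu, Sun). Combined with simplicity of $\lambda$, this gives $\ker A_c=\mathbb C\phi$.

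\textbf{Step 2: decompose the datum.} Write $\Pi u_0=\langle \Pi u_0,\phi\rangle\phi+f_\perp$ with $f_\perp\perp\phi$. Proposition~\ref{prop:general-kernel-shift} gives $\Omega_t^{(n)}f_\perp(z+ct)\to0$.

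\textbf{Step 3: the $\phi$ component.} Here the resolvent does not act diagonally, since $X^*$ does not commute with $A_c$. I will use that $\phi\in\operatorname{Dom}(X^*)$, which holds for eigenfunctions when $u_0\in H^s$ (one has $x\phi\in L^2$; alternatively, argue as in Proposition~\ref{lem:resolvent-dxstar} on the eigenvalue equation). Set $m:=\langle X^*\phi,\phi\rangle$ and $r:=X^*\phi-m\phi$, so that $r\perp\phi$. Since $A_c\phi=0$,
$$(X^*-tA_c-z)\phi=(m-z)\phi+r.$$
Applying the resolvent $R_t:=(X^*-tA_c-z)^{-1}$ and then $\frac{1}{2\ii\pi}I_+$ yields
$$\Omega_t^{(n)}\phi(z+ct)=\frac{1}{m-z}\Bigl(\frac{I_+(\phi)}{2\ii\pi}-\Omega_t^{(n)}r(z+ct)\Bigr).$$
Here $m-z\neq0$, because $\operatorname{Im}m=\operatorname{Im}\langle X^*\phi,\phi\rangle\le 0$ by the maximal dissipativity of $-\ii X^*$, while $\operatorname{Im}z>0$. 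Since $r\perp\ker A_c$, Proposition~\ref{prop:general-kernel-shift} again gives $\Omega_t^{(n)}r(z+ct)\to0$.

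\textbf{Step 4: compute the constant.} Applying $I_+$ to $L_{u_0}\phi=\lambda\phi$ and using $I_+(L_{u_0}g)=-\langle g,\Pi u_0\rangle$ (as in the proof of Proposition~\ref{Prop1a}) gives $\lambda I_+(\phi)=-\langle\phi,\Pi u_0\rangle$. Combined with the known normalization identity $|\langle\Pi u_0,\phi\rangle|^2=2\pi|\lambda|$ for eigenfunctions of the BO Lax operator (Wu), this yields
$$\langle\Pi u_0,\phi\rangle\frac{I_+(\phi)}{2\ii\pi}=-\frac{|\langle\Pi u_0,\phi\rangle|^2}{2\ii\pi\lambda}=\frac{2\pi\lambda}{2\ii\pi\lambda}=-\ii .$$
Hence the limit is $\dfrac{-\ii}{m-z}=\dfrac{\ii}{z-m}$, which is \eqref{eq:convergPi}.

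The main obstacles are the spectral inputs. These are excluding embedded or positive eigenvalues when $n$ is even, the membership $\phi\in\operatorname{Dom}(X^*)$, and the identity $|\langle\Pi u_0,\phi\rangle|^2=2\pi|\lambda|$. If the last identity is not available as quoted, I will derive it by taking the Fourier transform of the eigenvalue equation and evaluating $\langle X^*\phi,\phi\rangle-\langle\phi,X^*\phi\rangle$ via the commutator $[X^*,L_{u_0}]=\ii\,\mathrm{Id}-\frac{\ii}{2\pi}I_+(\cdot)\Pi u_0$. This gives
$$0=\langle[X^*,L_{u_0}]\phi,\phi\rangle=\ii-\frac{\ii}{2\pi}I_+(\phi)\langle\Pi u_0,\phi\rangle,$$
which is exactly the needed constant.
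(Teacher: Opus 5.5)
Your proposal is correct and is essentially the paper's proof. It uses the same splitting of $\Pi u_0$ along $\phi$, the same identity $(X^*-tA_c-z)\phi=(m-z)\phi+r$ with $r\perp\phi$ handled by Proposition~\ref{prop:general-kernel-shift}, and the same Wu identities for the constant; your Step~1, which rules out the eigenvalue $-\lambda$ when $n$ is even, makes explicit a spectral input that the paper uses without comment.

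One correction: the parenthetical claim $x\phi\in L^2$ is false. It would force $\widehat\phi(0^+)=I_+(\phi)=0$, which contradicts $\lambda I_+(\phi)=-\langle\phi,\Pi u_0\rangle\neq0$. Only $\widehat\phi|_{(0,\infty)}\in H^1(0,\infty)$ holds, so rely on your alternative argument modeled on Proposition~\ref{lem:resolvent-dxstar}, with $f=0$ and $z=\lambda<0$.
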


\begin{proof}
Recall the formula $
\Pi u(t,\cdot)=\Omega_t^{(n)}\Pi u_0$, it suffices to study
$
\Omega_t^{(n)}\Pi u_0\bigl(z-(n+1)\lambda^n t\bigr).
$
We decompose
$ \Pi u_0=\langle \Pi u_0,\phi\rangle \phi+(\Pi u_0)_{ \perp}$, where 
$(\Pi u_0)_{\perp} \perp \phi$. Apply Proposition~\ref{prop:general-kernel-shift} with $ c:=-(n+1)\lambda^n$,
we have, for every $z\in \C_+$, 
\begin{equation}\label{eqconverperd}
\Omega_t^{(n)}(\Pi u_0)_{ \perp}(z -(n+1)\lambda^n t)\longrightarrow 0, \quad \text{as }t\to\infty.
\end{equation}
Next, define
$
\psi:=
\frac{\phi}{\langle X^*\phi,\phi\rangle-z}. 
$
Since $L_{u_0}^n\phi=\lambda^n\phi,$ we have
$$
(X^*-t(n+1) (L_{u_0}^n - \lambda^n )-z)\psi =\phi+\rho,
\quad \text{with} \quad
\rho:=
\frac{X^*\phi-\langle X^*\phi,\phi\rangle\phi}
{\langle X^*\phi,\phi\rangle-z}.
$$
Equivalently,
$$
[X^*-t (n+1) (L_{u_0}^n - \lambda^n )-z]^{-1}\phi=\psi-[X^*- t (n+1) (L_{u_0}^n - \lambda^n ) -z]^{-1} \rho.
$$
Applying $I_+$ to both sides of the identity above and using the definition of $\Omega_t^{(n)}$, 
we obtain
	$$
	\Omega_t^{(n)}\phi(z -(n+1)\lambda^n t)
	=
	\frac{I_+(\psi)}{2\ii\pi}
	-\Omega_t^{(n)}\rho (z -(n+1)\lambda^n t).
	$$
It is easy to see that $\rho\perp \phi,$ hence
Proposition ~\ref{prop:general-kernel-shift} implies
$
\Omega_t^{(n)}\rho(z -(n+1)\lambda^n t)  \xrightarrow{t\to\infty } 0.
$
Therefore,
	$$
	\Omega_t^{(n)}\phi(z -(n+1)\lambda^n t)
	\longrightarrow
	\frac{I_+(\phi)}
	{2\ii\pi\bigl(\langle X^*\phi,\phi\rangle-z\bigr)}, \qquad\text{as }t\to\infty. 
	$$
Combining this with the decomposition of $\Pi u_0$ and the convergence \eqref{eqconverperd}, we obtain 
\begin{align}\label{eq:lambdafn}
\Omega_t^{(n)}\Pi u_0(z-(n+1)\lambda^n  t)
 \longrightarrow
\frac{\langle \Pi u_0,\phi\rangle\,I_+(\phi)}
{2\ii\pi\bigl(\langle X^*\phi,\phi\rangle-z\bigr)}, \qquad\text{as }t\to\infty. 
\end{align}
Finally, by Wu's identities \cite[Lemma 2.5]{wu2016simplicity} generalized recently in 
\cite{badreddine2025orbital, gassot2026infinite},
\begin{equation}\label{eq:Wu} |\langle \phi, \Pi u_0\rangle |^2=-2\pi \lambda\ ,\ \lambda I_+(\phi)=-\langle \phi,\Pi u_0\rangle ,
\end{equation}
we have
$$\frac{\langle \Pi u_0, \phi \rangle I_+(\phi)}{2 \ii \pi} = - \frac{\langle \Pi u_0, \phi \rangle \langle \phi, \Pi u_0\rangle }{2 \ii \pi \lambda } = -\ii. $$
Substituting this identity into \eqref{eq:lambdafn}, we get the desired result. 
\end{proof}

Now, we are ready to prove Theorem \ref{thm:classification}. 

\begin{proof}[Proof of Theorem~\ref{thm:classification}]
Since $u(t,x)=u_0(x-ct),$ by the definition of $\Omega_t^{(n)}$, we have
$$
\Pi u_0(z)=\Pi u(t,z+ct)=\Omega_t^{(n)}(\Pi u_0)(z+ct), \quad \text{for every} \quad z\in\C_+, t\in\R.
$$
We first consider the case $c=0$. Let
$\varphi\in \ker\bigl((n+1)L_{u_0}^n\bigr)$. Since $L_{u_0}$ is self--adjoint,   $\varphi\in\ker(L_{u_0})$, hence 
$D \varphi=T_{u_0}\varphi=\Pi(u_0 \varphi).$
Taking Fourier transforms, we obtain, for every $\xi>0$,
	$$
	\xi \widehat{\varphi}(\xi)=\widehat{u_0\varphi}(\xi).
	$$
Since $u_0,\phi\in L^2(\R)$, we have $u_0\phi\in L^1(\R)$, so $\widehat{u_0\phi}$ is continuous. 
Passing to the limit as $\xi\to0^+$ yields
$\int_{\R} u_0(y)\varphi(y)\,dy=0.$
Now, write $u_0=\Pi u_0+\overline{\Pi u_0}$.  
Since $\Pi u_0,\varphi\in L^2_+(\R)$, their Fourier transforms are supported in $[0,\infty)$, and therefore
$$
\int_{\R} (\Pi u_0)(y)\varphi(y)\,dy
=0.
$$
It follows that
$0=\langle \varphi,\Pi u_0\rangle,$ hence
$\Pi u_0\perp \ker\bigl((n+1)L_{u_0}^n\bigr).$
By Proposition~\ref{prop:general-kernel-shift}, for every $z\in\C_+$, 
$$\Pi u_0(z)=\Omega_t^{(n)}(\Pi u_0)(z)\longrightarrow 0
\qquad\text{as }t\to\infty,
$$
which yields $\Pi u_0\equiv0$. Since $u_0$ is real-valued, we have $u_0\equiv0$.
	
Now, we assume that $u_0\not\equiv0$. By the first part of the proof, we necessarily have $c\neq0$. Moreover, 
$\ker\bigl((n+1)L_{u_0}^n+c\bigr)\neq\{0\}.$ Otherwise, we have $\Pi u_0\perp \ker\bigl((n+1)L_{u_0}^n+c\bigr)$, which contradicts with $u_0\not\equiv0$ in view of Proposition \ref{prop:general-kernel-shift}.
Since eigenvalues of $L_{u_0}^n$ are $n$--th powers of eigenvalues of $L_{u_0}$, there exists a negative eigenvalue $\lambda$ of $L_{u_0}$ such that
$c=-(n+1)\lambda^n.$
Applying Proposition~\ref{prop:Solitonlimit}, we obtain
$$
\Pi u_0(z)
=
\lim_{t\to\infty}\Pi u\bigl(t,z-(n+1)\lambda^n t\bigr)
=
\frac{\ii}{z-\langle X^*\phi,\phi\rangle}
= \frac{\ii}{z+p},$$
where $p:=-\langle X^*\phi,\phi\rangle.$
Therefore,
$$
u_0(x)
=
\frac{\ii}{x+p}-\frac{\ii}{x+\overline{p}}
=
\frac{2\mathrm{Im}  p}{|x+p|^2}
=
R_{p}(x).
$$
Combining the Fourier representation 
$\mathrm{Im} \langle X^*\phi,\phi\rangle
= 
-\frac{|I_+(\phi)|^2}{4\pi}
$
with identities \eqref{eq:Wu},  we have
\begin{equation*}
	\mathrm{Im}\, p=\frac{\vert I_+(\phi)\vert ^2}{4\pi}=\frac{1}{2|\lambda|}>0.
\end{equation*}
Since $\lambda<0$, we obtain $\lambda=-\frac{1}{2 \mathrm{Im} p},$
and therefore
$
c=-(n+1)\lambda^n
=
(-1)^{n+1}\frac{n+1}{(2 \mathrm{Im} p)^n}.
$
This completes the proof.
\end{proof}

We conclude this section by checking that the profiles appearing in Theorem~\ref{thm:classification} are indeed  traveling-wave solutions of the $(n+1)$-th BO flow.
We observe that $\Pi R_p(z)=\frac{i}{z+p},  z\in\C_+,$ and that $\Pi R_p$ is an eigenfunction of $L_{R_p}$ associated with the eigenvalue $\lambda=-\frac1{2\mathrm{Im} p}.$
Applying the explicit formula \eqref{BOhierexpfor} of Theorem~\ref{mainBOhier} with $u_0=R_p$, we obtain
\begin{align*}
	\Pi u(t,z)
	&=
	\frac{1}{2\pi i}
	I_+\bigl(\bigl(X^*-(n+1)tL_{R_p}^n-z\bigr)^{-1}\Pi R_p\bigr) \\
	&=
	\frac{1}{2\pi i}
	I_+\bigl(\bigl(X^*-(n+1)t\lambda^n-z\bigr)^{-1}\Pi R_p\bigr) \\
&	=
	\frac{i}{z+p+(n+1)t\lambda^n} 
	=
	\frac{i}{z-c_{n,p}t+p} 
	=
	\Pi\bigl(R_p(\,\cdot-c_{n,p}t)\bigr)(z).
\end{align*}
where
$
c_{n,p}
=(-1)^{n+1}\frac{n+1}{(2\mathrm{Im} p)^n}.$
Therefore,
$$
u(t,x)=R_p(x-c_{n,p}t)
$$
is a traveling-wave solution of the $(n+1)$-th Benjamin--Ono flow. In particular, Theorem~\ref{thm:classification} shows that, on the real line, the nontrivial traveling waves are exactly the one-solitons.

\section{Zero dispersion limit for third order BO}\label{seczeroBo3}
In this section, we consider the third equation of the BO hierarchy with a small dispersion parameter $\varepsilon > 0$,
\begin{equation}\label{BO3eps}
	\begin{cases}
		\partial_t u^\varepsilon + \varepsilon^2 \partial_x^3 u^\varepsilon + \frac{3}{2} \varepsilon \partial_x (u^\varepsilon |D| u^\varepsilon) + \frac{3}{2} \varepsilon |D| (u^\varepsilon \partial_x u^\varepsilon) - \partial_x (u^\varepsilon)^3 = 0,\\
		u^\varepsilon (0, x) = u_{0} (x) \in L^2_{r}(\mathbb{R}),
	\end{cases}	
\end{equation}
In Hamiltonian form, it reads
\begin{equation*}
\partial_t u^\varepsilon = \partial_x (\nabla E^\varepsilon_2(u^\varepsilon)), \quad \text{with} \;\; E^\varepsilon_2(u^\varepsilon) := \langle (\varepsilon D - T_{u^\varepsilon})^{2} \Pi u^\varepsilon, \Pi u^\varepsilon \rangle_{L^2}
\end{equation*}
We introduce the re-scaled variables:
$$
u^\varepsilon(t, x) = \varepsilon\, v(s, x), \quad t = \frac{s}{\varepsilon^2}.
$$
Under this scaling, the function $v(s, x)$ satisfies the third order BO equation:
\begin{equation*}
	\begin{cases}
		\partial_s v
		+ \partial_x^3 v
		+ \frac{3}{2} \partial_x(v |D| v)
		+ \frac{3}{2} |D|(v \partial_x v)
		- \partial_x(v)^3 = 0, \\
		v(0, x) = v_0(x) = \varepsilon^{-1} u_{0}(x).
	\end{cases}
\end{equation*}
From Section \ref{secproof}, we know that, if $v_0 \in H^{s}_r (\mathbb{R})$, $s > 5/2$, the solution of the above initial value problem is given by 
$$
v(s,x) = \Pi v(s,x) + \overline{\Pi v(s,x)}, 
\qquad x \in \mathbb{R},
$$
where for every complex number $z$ with $\mathrm{Im}(z)>0$,
\begin{align}\label{formula_BO3}
	\Pi v(s, z) 
	&= \frac{1}{2 \ii \pi} \, I_+ \left[ \left( X^* - 3s L_{v_0}^2 - z \mathrm{Id} \right)^{-1} \Pi v_0 \right] \\
	& = \frac{1}{2 \ii \pi} \, I_+ \left[ \left( X^* - 3s D^2 + 3s (D T_{v_0} + T_{v_0}D )- 3s T_{v_0}^2  - z \mathrm{Id} \right)^{-1} \Pi v_0 \right]\nonumber.
\end{align}
We know from Remark \ref{remark:well-def} that, the resolvent on the right hand side is well defined, and 
$$e^{- \ii \frac{t}{\varepsilon} (\varepsilon D - T_{u_0} )^3}  U^*(t) X^* U(t)  e^{ \ii \frac{t}{\varepsilon} (\varepsilon D - T_{u_0} )^3}
= X^* -3 t (\varepsilon D - T_{u_0})^2.$$
As the maximal dissipativity
is preserved under unitary conjugation, and 
$U^*(t) X^* U(t)$ is maximally dissipative, we find that the operator on the right-hand side is also maximally dissipative. 

Moreover, scaling the solution back via $\varepsilon \Pi v(s,x) = \Pi u^\varepsilon(t,x)$ and $\varepsilon \Pi v_0 = \Pi u_0$, we can write the solution to the original initial value problem \eqref{BO3eps} as
$$
u^\varepsilon(t,x) = \Pi u^\varepsilon(t,x) + \overline{\Pi u^\varepsilon(t,x)}, 
\qquad x \in \mathbb{R},$$
where  for every complex number $z$ with $\mathrm{Im}(z)>0$,
\begin{align*}	
	\Pi u^\varepsilon(t, z)  
	& = \frac{1}{2 \ii \pi} \, I_+ \left[ \left( X^* - 3t (\varepsilon D-T_{u_0})^2 - z \mathrm{Id} \right)^{-1} \Pi u_0 \right] \\
	& = \frac{1}{2 \ii \pi} \, I_+ \left[ \left( X^* - 3t \varepsilon^2 D^2 + 3t \varepsilon (D T_{u_0} + T_{u_0}D )- 3t T_{u_0}^2  - z \mathrm{Id} \right)^{-1} \Pi u_0 \right] \\
	& = \frac{1}{2 \ii \pi} \, I_+ \left[ \left( X^* - 3t \varepsilon^2 D^2 + 6t \varepsilon D T_{u_0} - 3t \varepsilon T_{D u_0} - 3t T_{u_0}^2  - z \mathrm{Id} \right)^{-1} \Pi u_0 \right].
\end{align*}
The maximal dissipativity of $ X^* - 3t (\varepsilon D-T_{u_0})^2$  guarantees that the resolvent on the right hand side is well defined. 

Using the explicit formula from Theorem \ref{mainBO3}, we can prove that, for the third equation of the BO hierarchy with a small dispersion parameter $\varepsilon > 0$, 
the solution $u^\varepsilon(t)$ of \eqref{BO3eps} converges weakly to a limit in $L^2(\mathbb{R})$, and the limit is given by a geometric formula. 
\begin{theorem}\label{thm: mainBO3eps}
Let $u_0 \in H^s_r(\mathbb{R})$ with $s >  \frac{5}{2}$.  For every $t \in \mathbb{R}$, the solution $u^\varepsilon(t)$ of \eqref{BO3eps} converges weakly in $L^2(\mathbb{R})$ to a function $ZD[u_0](t, \cdot)$.
The set $K_t(u_0)$ of critical values of the function
	$$
	y \in \mathbb{R} \mapsto y - 3 t u_0^2(y)
	$$
	is a compact subset of measure $0$. For every connected component $\Omega$ of $K_t(u_0)$, there exists a nonnegative integer $l$ such that, for every $x\in \Omega$, the equation
	$$
	y - 3 t u_0^2(y) = x
	$$
	has $2 \ell +1$ simple real solutions 
	$$
	y_0(t, x ) < y_1(t, x ) < \cdots < y_{2 \ell}(t, x )
	$$
	and the zero dispersion limit is given by 
	\begin{align}\label{eq:ZDu_0geom}
		\textrm{ZD}[u_0](t, x) = \sum_{k = 0}^{2 \ell} (-1)^k u_0 (y_k (t, x)).
	\end{align}
\end{theorem}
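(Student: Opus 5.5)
The plan follows the strategy of \cite{Gerard2025small}. We reduce to identifying the limit of $\Pi u^\varepsilon(t,z)$ for every $z\in\C_+$, prove the convergence first for rational data, and then pass to general data by density.

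\textbf{Reduction and geometric facts.} By conservation of the $L^2$ norm, $\{u^\varepsilon(t)\}_{\varepsilon>0}$ is bounded in $L^2(\R)$. Since $u^\varepsilon=\Pi u^\varepsilon+\overline{\Pi u^\varepsilon}$, weak convergence follows once $\Pi u^\varepsilon(t,z)=\frac{1}{2\pi}\langle \Pi u^\varepsilon(t),\frac{\ii}{\,\cdot-\bar z}\rangle$-type pairings converge for every $z\in\C_+$. Indeed, the functions $(x-\bar z)^{-1}$, $z\in\C_+$, span a dense subspace of $L^2_+$.

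The statements on $K_t(u_0)$ are elementary and handled separately. Since $s>5/2$, we have $u_0\in C^2$ with $u_0,u_0'\to0$ at infinity. Hence $y\mapsto y-3tu_0^2(y)$ has derivative close to $1$ outside a compact set, so its critical points lie in a compact set, and $K_t(u_0)$ is compact. Sard's theorem gives that $K_t(u_0)$ has measure zero. On each component $\Omega$ of $K_t(u_0)^c$ the number of preimages is locally constant. It is odd because the function tends to $\pm\infty$ as $y\to\pm\infty$. The preimages $y_k(t,x)$ are smooth in $x$, and the signs of $1-6tu_0u_0'(y_k)$ alternate $+,-,+,\dots$.

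\textbf{Target formula.} We aim to prove that
$$\lim_{\varepsilon\to0}\Pi u^\varepsilon(t,z)=\frac{1}{2\ii\pi}\int_\R \frac{u_0(y)\bigl(1-6tu_0(y)u_0'(y)\bigr)}{y-3tu_0(y)^2-z}\,dy .$$
The change of variables $x=y-3tu_0^2(y)$ on each monotone branch, with the alternating orientations just noted, turns the right-hand side into $\frac{1}{2\ii\pi}\int_\R \frac{ZD[u_0](t,x)}{x-z}\,dx$. This is $\Pi ZD[u_0](t,z)$ for the alternating sum \eqref{eq:ZDu_0geom}, which identifies the limit.

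\textbf{Rational data.} Take $\Pi u_0=\sum_{j=1}^N \frac{a_j}{x-\bar p_j}$ with simple poles $p_j\in\C_+$. We study the resolvent equation
$$\bigl(X^*-3t(\varepsilon D-T_{u_0})^2-z\bigr)f=\Pi u_0,$$
which is well defined by Remark \ref{remark:well-def}. On $L^2_+$, $X^*$ acts as multiplication by $x$ up to a rank-one term involving $I_+(f)$. For rational $u_0$, $T_{u_0}g=u_0g$ up to finitely many terms $\frac{c_j(g)}{x-\bar p_j}$, with $c_j(g)$ given by values of $g$ at the $p_j$'s. The equation therefore becomes a second-order ODE with rational coefficients and small parameter $\varepsilon$, plus a finite-dimensional constraint. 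Solving the ODE for each finite-dimensional forcing reduces $I_+(f)$ to a linear system in the unknowns $I_+(f)$ and $g(p_j)$.

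In the limit $\varepsilon\to0$, $f$ approaches the dispersionless profile. This is where the term $(1-6tu_0u_0')/(y-3tu_0^2-z)$ arises, from $(x-3tu_0^2(x)-z)^{-1}$ together with the boundary and pole terms. The limiting linear system is then solved through a determinant of Vandermonde type, using the algebraic lemma of the Appendix. We then check that its solution coincides with the residue evaluation of the target integral.

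\textbf{Density.} For general $u_0\in H^s_r$, approximate by rational $u_{0,k}\to u_0$. Lemma \ref{L2boundexplicit} controls the dependence on the right-hand side $\Pi u_0$ uniformly in $\varepsilon$. The dependence on $u_0$ inside the operator is the delicate part. We plan to treat it through the resolvent identity, using uniform $L^2$ stability of the flows: $\|u^\varepsilon(t)-u_k^\varepsilon(t)\|$ small uniformly in $\varepsilon$. If this fails, we will instead use the weak-limit uniqueness argument of \cite{Gerard2025small}, based on the explicit formula with the operator conjugated by $U(t)$. Finally, we need continuity of the target integral in $u_0$.

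\textbf{Main obstacle.} The main obstacle is the $\varepsilon\to0$ analysis of the resolvent for rational data. The operator $\varepsilon^2D^2$ makes the problem singularly perturbed, so we must show that $I_+(f)$ converges to the dispersionless expression uniformly, including near the turning points where $x-3tu_0^2-z$ degenerates.
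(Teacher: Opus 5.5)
\textbf{The $\varepsilon\to0$ step, which you leave open.} Your reduction to convergence of $\Pi u^\varepsilon(t,z)$ for $z\in\C_+$ is correct, and so are the facts about $K_t(u_0)$ and your target formula. For rational data, the residues of $u_0(1-6tu_0u_0')/(y-3tu_0^2-z)$ at the poles $p_j\in\C_+$ of $u_0$ vanish. So your integral equals $\sum_{\mathrm{Im}\,y_k>0}u_0(y_k)$, which is exactly the quantity $\lambda(t,z)$ the paper obtains, and your change of variables could replace the Appendix lemma.

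The real gap is the $\varepsilon\to0$ step, which is the heart of the proof and which you set up in the hardest possible way. In the $x$-variable, $\varepsilon^2D^2$ is indeed a singular perturbation. On the Fourier side, however, $X^*$ is $\ii\partial_\xi$, and $-3t\varepsilon^2D^2+6t\varepsilon DT_{u_0}-3t\varepsilon T_{Du_0}$ is $\varepsilon$ times multiplication by polynomials in $\xi$ composed with bounded Toeplitz operators. That is a regular perturbation of a first-order equation in $\xi$, and $I_+$ only sees $\xi$ near $0$. This is the missing idea, and it works for every $u_0\in H^s_r$, not only rational data.
\begin{itemize}
\item Maximal dissipativity gives $\|h_\varepsilon\|_{L^2}\le\|\Pi u_0\|_{L^2}/\mathrm{Im}\,z$ for $h_\varepsilon=(X^*-3t(\varepsilon D-T_{u_0})^2-z)^{-1}\Pi u_0$.
\item Testing the Fourier-side equation against $\phi\in C_c^\infty((0,\infty))$ kills the $\varepsilon$-terms. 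Hence every weak limit solves $(X^*-3tT_{u_0}^2-z)h=\Pi u_0$, and by invertibility it equals $h$.
\item Write $\hat h_\varepsilon(0)=-\int_0^\infty\partial_\xi(\hat h_\varepsilon\chi)\,d\xi$ with $\chi(0)=1$ and substitute the equation for $\partial_\xi\hat h_\varepsilon$. This gives $\hat h_\varepsilon(0)\to\hat h(0)$.
\end{itemize}
Hence $\Pi ZD[u_0](t,z)=\frac{1}{2\ii\pi}I_+[(X^*-3tT_{u_0}^2-z)^{-1}\Pi u_0]$. The rational computation (explicit rational resolvent, linear system at the roots in $\C_+$, Vandermonde reduction) is then done only for the dispersionless operator $X^*-3tT_{u_0}^2$, a bounded perturbation of $X^*$. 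No turning-point analysis ever arises.

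\textbf{The density step.} Your density step has a second gap. Uniform-in-$\varepsilon$ $L^2$ stability of $u_0\mapsto u^\varepsilon(t)$ is not available, and one does not expect it: once oscillations of wavelength $O(\varepsilon)$ form, an $O(\delta)$ change of the data shifts their phases by $O(\delta/\varepsilon)$. The resolvent identity does not close either, because the difference of the two operators contains the unbounded term $3t\varepsilon(DT_w+T_wD)$, with $w=u_0-u_{0,k}$.

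Once the limit formula above holds for all $u_0$, density is needed only to identify it, and it is applied to the limit rather than to the flows. If $u_0^\delta\to u_0$ in $L^2$ with a uniform $L^\infty$ bound, then $T_{u_0^\delta}^2\to T_{u_0}^2$ strongly. So the resolvents of $X^*-3tT_{u_0^\delta}^2$ converge strongly and $ZD[u_0^\delta](t)\rightharpoonup ZD[u_0](t)$. Together with $C^1_{\mathrm{loc}}$ convergence and stability of simple roots on subintervals of components of $K_t(u_0)^c$, this transfers the alternating sum from rational to general data.

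A minor slip: the correction terms in $T_{u_0}g$ are $c_jg(p_j)/(x-p_j)$, with poles at $p_j$, not at $\bar p_j$.
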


Before giving the proof of Theorem \ref{thm: mainBO3eps}, we first state the following weak convergence result and the stability statement that will be used in the density argument below.
\begin{proposition}\label{prop:BO3-weak}
Let $u_0 \in H^s_r(\mathbb{R})$ with $s >  \frac{5}{2}$. For every $t \in \mathbb{R}$, the solution $u^\varepsilon(t)$ of \eqref{BO3eps} converges weakly in $L^2(\mathbb{R})$ to a function $ZD[u_0](t, \cdot)$, characterized by
	$$
	\forall x \in \mathbb{R}, \quad ZD[u_0](t, x) = \Pi ZD[u_0](t, x) + \overline{\Pi ZD[u_0](t, x)},
	$$
	where, for every complex number $z$ with $\textrm{Im} \,z >0$, 
	\begin{align}\label{eq:BO3T2}
		\Pi ZD[u_0](t, z) = \frac{1}{2 \ii \pi }\, I_+ \Bigl[(X^* - 3t\,T_{u_0}^2 - z\,\mathrm{Id})^{-1} (\Pi u_0)\Bigr].
	\end{align}
Moreover, the map
$$
u_0 \in L^\infty(\mathbb{R}) \cap L^2(\mathbb{R}) \longmapsto ZD[u_0]
$$
defined by Proposition \ref{prop:BO3-weak} has the following properties.
\begin{enumerate}
	\item For every $t \in \mathbb{R}$,
	$
	\|ZD[u_0](t)\|_{L^2} \le \|u_0\|_{L^2},
	$
	and $ZD[u_0]$ is continuous on $\mathbb{R}$ with values in $L^2(\mathbb{R})$ endowed with the weak topology.
	\item If $u_0^\delta$ converges strongly to $u_0$ in $L^2(\mathbb{R})$ as $\delta \to 0$, with a uniform bound on $\|u_0^\delta\|_{L^\infty}$, then, for every $t \in \mathbb{R}$, $ZD[u_0^\delta](t)$ converges to $ZD[u_0](t)$ weakly in $L^2(\mathbb{R})$.
\end{enumerate}
\end{proposition}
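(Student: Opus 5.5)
We follow the strategy of \cite{Gerard2025small}: first establish convergence of $\Pi u^\varepsilon(t,z)$ for every fixed $z\in\C_+$, then upgrade to weak $L^2$ convergence.

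\textbf{Reduction to pointwise convergence.} By conservation of the $L^2$ norm, $\|u^\varepsilon(t)\|_{L^2}=\|u_0\|_{L^2}$. Hence $\{\Pi u^\varepsilon(t)\}_\varepsilon$ is bounded in $L^2_+$. The functions $\chi_z(x):=\frac{1}{x-\bar z}$, $z\in\C_+$, span a dense subspace of $L^2_+$, and $\langle \Pi u^\varepsilon(t),\chi_z\rangle$ is, up to a constant factor, $\Pi u^\varepsilon(t,z)$. So weak convergence of $\Pi u^\varepsilon(t)$, and therefore of $u^\varepsilon(t)=2\mathrm{Re}\,\Pi u^\varepsilon(t)$, follows once we know that, for every $z\in\C_+$,
$$\Pi u^\varepsilon(t,z)\longrightarrow \frac{1}{2\ii\pi}I_+\bigl[(X^*-3tT_{u_0}^2-z)^{-1}\Pi u_0\bigr].$$

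\textbf{Resolvent convergence.} We compare $f_\varepsilon:=(X^*-3t(\varepsilon D-T_{u_0})^2-z)^{-1}\Pi u_0$ with $f:=(X^*-3tT_{u_0}^2-z)^{-1}\Pi u_0$. The operator $X^*-3tT_{u_0}^2$ is a bounded perturbation of $X^*$, and $-\ii$ times it is maximally dissipative, since $T_{u_0}^2\ge 0$ is self-adjoint; hence $f$ is well defined and $\|f\|\le \|\Pi u_0\|/\mathrm{Im}\,z$. Remark~\ref{remark:well-def} gives the uniform bound $\|(X^*-3t(\varepsilon D-T_{u_0})^2-z)^{-1}\|\le 1/\mathrm{Im}\,z$.

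Since $u_0\in H^s$ with $s>5/2$, we have $\Pi u_0\in \mathrm{Dom}(X^*)$ if $u_0$ has some decay. Otherwise we work on a dense class of $u_0$ (for instance rational functions) and use the stability statements below. We then try to show that $f$ is regular, namely $f\in H^2_+\cap\mathrm{Dom}(X^*)$; Proposition~\ref{lem:resolvent-dxstar}-type arguments give the $\mathrm{Dom}(X^*)$ part. With this regularity, the resolvent identity yields
$$f_\varepsilon-f=(X^*-3t(\varepsilon D-T_{u_0})^2-z)^{-1}\bigl(3t(\varepsilon^2D^2-\varepsilon DT_{u_0}-\varepsilon T_{u_0}D)f\bigr),$$
whose $L^2$ norm is $O(\varepsilon)$.

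The difficulty is that $I_+$ is not $L^2$-continuous, so $L^2$ convergence of $f_\varepsilon$ is not enough. To handle this, we instead pair with $\chi_\delta$ and use the adjoint resolvent: write $I_+(f_\varepsilon)$ as $\lim_\delta\langle \Pi u_0, R_\varepsilon^*\chi_\delta\rangle$. Alternatively, as the paper's remark suggests, we work directly with the resolvent equation $(X^*-z)f_\varepsilon=\Pi u_0+3t(\varepsilon D-T_{u_0})^2f_\varepsilon$ in Fourier variables, evaluate at $\xi=0^+$, and show that $\widehat{f_\varepsilon}(0^+)\to\widehat f(0^+)$ via uniform $H^1(0,\infty)$-type bounds on $\widehat{f_\varepsilon}$. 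This step, controlling $I_+$ uniformly in $\varepsilon$, is where we expect the main obstacle. The unbounded term $\varepsilon^2D^2$ has a sign after multiplication by $-3t$ and must be handled by an energy estimate: taking the imaginary part of $\langle(X^*-3t(\varepsilon D-T_{u_0})^2-z)f_\varepsilon,f_\varepsilon\rangle$ together with the $I_+$ term coming from $\mathrm{Im}\langle X^*g,g\rangle=-|I_+(g)|^2/4\pi$ gives bounds on $|I_+(f_\varepsilon)|$ and on $\varepsilon$-weighted derivatives.

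\textbf{Properties (1) and (2).} These follow from the formula~\eqref{eq:BO3T2}.

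For (1), the norm bound is inherited from weak lower semicontinuity. Time continuity in the weak topology follows from continuity in $t$ of the bounded-perturbation resolvent $(X^*-3tT_{u_0}^2-z)^{-1}$, using the uniform $L^2$ bound and density of the $\chi_z$.

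For (2), if $u_0^\delta\to u_0$ in $L^2$ with $\|u_0^\delta\|_{L^\infty}$ bounded, then $T_{u_0^\delta}^2\to T_{u_0}^2$ strongly. Hence the resolvents converge strongly, $\Pi u_0^\delta\to\Pi u_0$, and the pointwise values at each $z$ converge. Here the $I_+$ issue is handled by the analogue of Lemma~\ref{L2boundexplicit}: the map $\Pi u_0\mapsto \Pi ZD[u_0](t)$ is a contraction on $L^2_+$, being a weak limit of the unitary maps $\Omega_t$. Pointwise convergence on $\C_+$ plus the uniform $L^2$ bound then gives weak convergence.
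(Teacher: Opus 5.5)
Your reduction to pointwise convergence of $\Pi u^\varepsilon(t,z)$ is sound. The gap is in the step that carries all the weight, $I_+(f_\varepsilon)\to I_+(f)$: it is never proved, and the tools you propose do not deliver it.

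\textbf{The energy identity does not give what you claim.} Since $\varepsilon D-T_{u_0}$ is self-adjoint, $\langle(\varepsilon D-T_{u_0})^2f_\varepsilon,f_\varepsilon\rangle=\|(\varepsilon D-T_{u_0})f_\varepsilon\|^2$ is real and drops out of the imaginary part. What remains is only $\frac{1}{4\pi}|I_+(f_\varepsilon)|^2+\mathrm{Im}\,z\,\|f_\varepsilon\|^2=-\mathrm{Im}\langle\Pi u_0,f_\varepsilon\rangle$. The real part, where the sign of $-3t\varepsilon^2D^2$ would matter, contains the uncontrolled term $\mathrm{Re}\langle X^*f_\varepsilon,f_\varepsilon\rangle$. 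So you get a uniform bound on $I_+(f_\varepsilon)$ but no control of $\varepsilon$-weighted derivatives, and a bound does not identify the limit. Global uniform $H^1(0,\infty)$ bounds on $\widehat{f_\varepsilon}$ are not available either, because of the term $\varepsilon^2\xi^2\widehat{f_\varepsilon}$.

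\textbf{The resolvent-identity route is not closed.} It could be: apply the same identity (equivalently, Lemma~\ref{L2boundexplicit} after scaling) to $h=R_\varepsilon g$, with $R_\varepsilon:=(X^*-3t(\varepsilon D-T_{u_0})^2-z)^{-1}$. This gives $|I_+(R_\varepsilon g)|\le\sqrt{\pi/\mathrm{Im}\,z}\,\|g\|_{L^2}$ uniformly in $\varepsilon$, hence $|I_+(f_\varepsilon)-I_+(f)|=O(\varepsilon)$. But that works only once you know $f\in H^2_+$ and that $f$ lies in the domain of the $\varepsilon$-operator. Neither is shown, and the first is not automatic. In $x$-space the limit equation reads $(x-z)f+\frac{1}{2\ii\pi}I_+(f)-3tT_{u_0}^2f=\Pi u_0$, which is of order zero and gives no smoothing.

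\textbf{The dense-class fallback does not help.} Property (2) is a stability statement for the limit formula, not for the $\varepsilon$-flows. Transferring $u^\varepsilon(t)\rightharpoonup ZD[u_0](t)$ from a dense class would need continuity of $u_0\mapsto u^\varepsilon(t)$ uniformly in $\varepsilon$, which you do not have.

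\textbf{The missing idea.} The paper's route (Lemma~\ref{lemma:perturbedbo3}) is to localize in $\xi$ near $0$, which makes regularity of $f$ unnecessary.
\begin{enumerate}
\item Test the Fourier-side equation for $\widehat{f_\varepsilon}$ against $\phi\in C_c^\infty((0,\infty))$. Every $\varepsilon$-dependent term is paired with a bounded, compactly supported weight $\xi^k\phi$, so it is $O(\varepsilon)$ by the uniform $L^2$ bound. Hence every weak limit solves the limit equation, and $\widehat{f_\varepsilon}\rightharpoonup\widehat f$.
\item Take $\chi\in C_c^\infty([0,\infty))$ with $\chi(0)=1$ and write $\widehat{f_\varepsilon}(0)=-\int_0^\infty\partial_\xi(\widehat{f_\varepsilon}\chi)\,d\xi$. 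Substitute $\partial_\xi\widehat{f_\varepsilon}$ from the equation. The $\varepsilon$-terms are again $O(\varepsilon)$. The remaining terms involve only $\widehat{\Pi u_0}$, $\widehat{f_\varepsilon}$ and $\widehat{T_{u_0}^2f_\varepsilon}$ paired with $\chi$ or $\chi'$, so they pass to the limit by weak convergence, giving $\widehat{f_\varepsilon}(0)\to\widehat f(0)$.
\end{enumerate}
Equivalently, uniform bounds do hold in $H^1(0,R)$ for each fixed $R$ (just not in $H^1(0,\infty)$), and compactness of $H^1(0,R)\hookrightarrow C([0,R])$ finishes the argument.

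With this step in place, your treatment of (1) and (2) is essentially fine. In (2), the $I_+$ issue is settled by the resolvent identity $R_\delta-R=R_\delta\,3t(T_{u_0^\delta}^2-T_{u_0}^2)R$, together with the bound $|I_+((X^*-3tT_{u_0^\delta}^2-z)^{-1}g)|\le\sqrt{\pi/\mathrm{Im}\,z}\,\|g\|_{L^2}$. That bound is uniform in $\delta$ and comes from the same imaginary-part identity.
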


\begin{remark}
Since $u_0 \in L^\infty(\mathbb{R})$, the operator $T_{u_0}^2$ is bounded and self-adjoint. The maximal dissipativity of $-iX^*$ implies that the operator $-i(X^* - 3t\,T_{u_0}^2)$ is maximally dissipative, thus the formula  \eqref{eq:BO3T2} is well-defined. 
The same observation applies to the higher-order resolvent
formula in Section \ref{seczeroBon} (see the formula \eqref{eq:BOnTn} in Proposition \ref{prop:BOhier-weak}).
\end{remark}

To pass from Proposition \ref{prop:BO3-weak} to the geometric formula in Theorem \ref{thm: mainBO3eps},  we must analyze the operators inside the resolvent in \eqref{eq:BO3T2}. The following lemma collects the necessary compactness and convergence properties for the resolvent equations.
\begin{lemma}\label{lemma:perturbedbo3}
Let $z \in \mathbb{C}$ with $\mathrm{Im}(z) > 0$, and assume $u_0 \in H^s_r (\R) $, $s > 5/2$. Let $h_\varepsilon, h \in L^2(\mathbb{R_+})$ be the solutions to the two equations below, respectively, 
\begin{align}
	& \left( X^* - 3t \varepsilon^2 D^2 + 6t \varepsilon D T_{u_0} - 3t \varepsilon T_{D u_0} - 3t T_{u_0}^2  - z \mathrm{Id} \right) h_\varepsilon =\Pi u_0, \label{eq:pertubedequation3}\\
	& \left(X^* - 3t\,T_{u_0}^2 - z\,\mathrm{Id}\right)h =  \Pi u_0\label{eq:nonpertubedequation3}.
\end{align}
Then, as $\varepsilon \to 0$, we have
\begin{enumerate}
	\item  $h_\varepsilon$ is uniformly bounded in $L^2(\mathbb{R}_+)$;
	\item $\hat{h}_\varepsilon \rightharpoonup \hat{h}$ weakly in $L^2(\mathbb{R}_+)$;
	\item  $\hat{h}_\varepsilon(0) \to \hat{h}(0)$.
\end{enumerate}
\end{lemma}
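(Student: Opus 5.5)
We plan to work on the Fourier side, where $\widehat{X^*h}(\xi)=\ii\,\partial_\xi \widehat h(\xi)$ on $(0,\infty)$, and to exploit the dissipative structure in the manner of Remark~\ref{remark:well-def}.

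\textbf{Step 1: uniform bound.} The operator $X^*-3t(\varepsilon D-T_{u_0})^2$ is, up to the factor $-\ii$, maximally dissipative; this is the unitary-conjugation argument of Remark~\ref{remark:well-def} applied to the rescaled flow. Expanding the square exactly as in the display preceding Theorem~\ref{thm: mainBO3eps} shows that it coincides with the operator in \eqref{eq:pertubedequation3}. Hence
$$\|h_\varepsilon\|_{L^2}\le \frac{\|\Pi u_0\|_{L^2}}{\mathrm{Im}\, z},$$
uniformly in $\varepsilon$, which gives item (1). Item (2) will also need the identity, valid for $h_\varepsilon$ in the domain,
$$\mathrm{Im}\langle (X^*-z)h_\varepsilon,h_\varepsilon\rangle=-\tfrac{|\hat h_\varepsilon(0)|^2}{4\pi}-\mathrm{Im}\, z\,\|h_\varepsilon\|^2 .$$
Pairing \eqref{eq:pertubedequation3} with $h_\varepsilon$ and using that the $L_{u_0}^2$-part is self-adjoint, we get that $|\hat h_\varepsilon(0)|^2\le C$ uniformly. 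Moreover, $\sqrt{|t|}\,\|(\varepsilon D-T_{u_0})h_\varepsilon\|$ is not controlled this way, since that term is real. That is acceptable, as it is not needed.

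\textbf{Step 2: weak limit.} We extract a subsequence with $h_\varepsilon\rightharpoonup \tilde h$. Then we test \eqref{eq:pertubedequation3} against $\varphi$ in a dense class, for instance $\varphi\in H^2_+$ with $\hat\varphi$ smooth, compactly supported in $(0,\infty)$. Such $\varphi$ lie in $\mathrm{Dom}(X)$ with $I_+(\varphi)=0$, so $\langle X^* h_\varepsilon,\varphi\rangle=\langle h_\varepsilon,X\varphi\rangle$ with no boundary term. Moving all the $\varepsilon$-terms onto $\varphi$ gives
$$\langle h_\varepsilon, -3t\varepsilon^2D^2\varphi+6t\varepsilon T_{u_0}D\varphi-3t\varepsilon T_{Du_0}\varphi\rangle=O(\varepsilon),$$
because $u_0,Du_0\in L^\infty$ by $s>5/2$. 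In the limit this yields $\langle \tilde h,(X-3tT_{u_0}^2-\bar z)\varphi\rangle=\langle \Pi u_0,\varphi\rangle$. We conclude that $\tilde h$ is a weak (distributional) solution of \eqref{eq:nonpertubedequation3} on $(0,\infty)$. Identifying $\tilde h=h$ is the delicate point. The weak formulation alone determines $\hat{\tilde h}$ on $(0,\infty)$ only modulo the homogeneous first-order ODE in $\xi$, which is an ODE with the bounded operator $T_{u_0}^2$. To fix the solution one needs the boundary behaviour at $\xi=\infty$ or at $0$, namely membership in $L^2$. Since $-\ii(X^*-3tT_{u_0}^2)$ is maximally dissipative, its resolvent gives the unique $L^2$ solution in the domain, and uniqueness of distributional $L^2$ solutions follows from the surjectivity of the adjoint: the range of $X-3tT_{u_0}^2-\bar z$ on the test class is dense. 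We would justify this density via the maximal accretivity of the adjoint, applied to a core. Uniqueness of the limit then upgrades subsequential convergence to convergence of the full family, which is item (2).

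\textbf{Step 3: convergence of $\hat h_\varepsilon(0)$.} This is the main obstacle, because weak convergence does not control point values. We plan to use the relation $I_+(f)=\lim\langle f|\chi_\delta\rangle$ in a quantitative form. We write $\frac{1}{2\ii\pi}\hat h_\varepsilon(0)$ via the duality identity
$$\hat h_\varepsilon(0)=\langle \Pi u_0, g_\varepsilon\rangle,\qquad g_\varepsilon:=(X-3t(\varepsilon D-T_{u_0})^2-\bar z)^{-1}\text{-type adjoint solution with boundary datum},$$
or, more concretely, we integrate the ODE in $\xi$. From \eqref{eq:pertubedequation3},
$$\ii\,\partial_\xi\hat h_\varepsilon=z\hat h_\varepsilon+\widehat{\Pi u_0}+\widehat{Q_\varepsilon h_\varepsilon},$$
where $Q_\varepsilon$ collects the remaining terms. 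Integrating against a cutoff $\theta(\xi)$ with $\theta(0)=1$, compactly supported, gives
$$\hat h_\varepsilon(0)=-\int \theta'\hat h_\varepsilon+\ii\int\theta\,(z\hat h_\varepsilon+\widehat{\Pi u_0}+\widehat{Q_\varepsilon h_\varepsilon})\,d\xi .$$
All terms are continuous under weak convergence provided $\langle Q_\varepsilon h_\varepsilon,\check\theta\rangle\to\langle -3tT_{u_0}^2h,\check\theta\rangle$. Since $\check\theta$ is smooth with decay, the $\varepsilon$-terms again transfer to the test function and give $O(\varepsilon)$, exactly as in Step 2. This yields item (3).
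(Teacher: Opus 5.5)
Your proposal is correct and follows the paper's proof essentially step for step. It gets the uniform bound from maximal dissipativity of $-\ii\bigl(X^*-3t(\varepsilon D-T_{u_0})^2\bigr)$, identifies the weak limit by testing against functions with $\hat\varphi$ compactly supported in $(0,\infty)$ so that all $\varepsilon$-terms are $O(\varepsilon)$, and proves convergence of $\hat h_\varepsilon(0)$ by writing $\hat h_\varepsilon(0)=-\int_0^\infty\partial_\xi(\hat h_\varepsilon\theta)\,d\xi$ with a cutoff satisfying $\theta(0)=1$ and substituting the equation. Your extra argument that the weak limit equals $h$ fills a step the paper only asserts; a simpler version is that an $L^2$ distributional solution has $\hat{\tilde h}'\in L^2$, so it lies in $\mathrm{Dom}(X^*)$ (no boundary condition), where $X^*-3tT_{u_0}^2-z$ is injective. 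Two sign slips are harmless: the adjoint of $T_{Du_0}$ is $-T_{Du_0}$, and $Q_\varepsilon h_\varepsilon$ converges to $+3tT_{u_0}^2h$, not $-3tT_{u_0}^2h$.
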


\begin{proof}
Let us set $$
A_\varepsilon := X^* - 3t (\varepsilon D-T_{u_0})^2 = 
X^* - 3t \varepsilon^2 D^2 + 6 t \varepsilon D T_{u_0} - 3t \varepsilon T_{D u_0} - 3t T_{u_0}^2. $$
Because $A_\varepsilon$ is maximally dissipative, we have $\|(A_\varepsilon - z \mathrm{Id})^{-1}\| \leq C_z$. This immediately yields the uniform bound for the solution $h_\varepsilon$,
$$ \|\hat{h}_\varepsilon\|_{L^2} = \|h_\varepsilon\|_{L^2} \le \|(A_\varepsilon - z \Id )^{-1}\| \|\Pi u_0\|_{L^2} \le C_z \|\Pi u_0\|_{L^2} \leq C.$$
Thus, there exists a subsequence converging weakly to some $\hat{g} \in L^2(\mathbb{R}_+)$.
To identify $\hat{g}$, we take the Fourier space formulation of the  equation \eqref{eq:pertubedequation3},	
\begin{align}\label{eq:perturbed}
\ii \partial_\xi \hat{h}_\varepsilon - z \hat{h}_\varepsilon - 3t \widehat{T_{u_0}^2 h_\varepsilon} - \widehat{\Pi u_0} = \varepsilon \Big( 3t \varepsilon \xi^2 \hat{h}_\varepsilon - 6t \xi \widehat{T_{u_0} h_\varepsilon} + 3t \widehat{T_{D u_0} h_\varepsilon} \Big).
\end{align} 
Let $\phi \in C_c^\infty((0, \infty))$ be a test function. Multiplying the equation by $\phi$ and integrating over $\xi \in (0, \infty)$, we obtain
\begin{align}\label{eq:hvarepphi}
	\int_0^\infty \hat{h}_\varepsilon \Big( - \ii \partial_\xi \phi - z \phi \Big) d\xi - 3t \int_0^\infty \widehat{T_{u_0}^2 h_\varepsilon} \phi \, d\xi - \int_0^\infty \widehat{\Pi u_0} \phi \, d\xi = \mathcal{E}_\varepsilon(\phi),
\end{align} 
where the error term is given by
$$ \mathcal{E}_\varepsilon(\phi) = \varepsilon \int_0^\infty \hat{h}_\varepsilon \Big( 3t \varepsilon \xi^2 \phi \Big) d\xi - \varepsilon \int_0^\infty \widehat{T_{u_0} h_\varepsilon} \Big( 6t \xi \phi \Big) d\xi + \varepsilon \int_0^\infty \widehat{T_{D u_0} h_\varepsilon} \Big( 3t \phi \Big) d\xi. $$
Because $\phi$ has compact support, the multipliers $\xi^2 \phi$ and $\xi \phi$ are bounded in $L^\infty$. Combined with the uniform bound $\|\hat{h}_\varepsilon\|_{L^2} \le C$ and the $L^2$-boundedness of the Toeplitz operators $T_{u_0}$ and $T_{Du_0}$, we have by the Cauchy-Schwarz inequality that
$$ |\mathcal{E}_\varepsilon(\phi)| \le \varepsilon \cdot \tilde{C} \xrightarrow{\varepsilon \to 0} 0. $$
Passing to the limit $\varepsilon \to 0$ on both sides of \eqref{eq:hvarepphi}, we get
$$ \int_0^\infty \hat{g} \Big( - \ii \partial_\xi \phi - z \phi \Big) d\xi - 3t \int_0^\infty \widehat{T_{u_0}^2 g} \phi \, d\xi = \int_0^\infty \widehat{\Pi u_0} \phi \, d\xi. $$
Therefore, $\hat{g}$ is a weak solution to the equation 
$$(X^* - 3tT_{u_0}^2 - z\Id )g = \Pi u_0.$$
We conclude that $\hat{g} = \hat{h}$, 
thus, the entire sequence converges weakly, $\hat{h}_\varepsilon \rightharpoonup \hat{h}$. The second point is proved.
	
To prove the third point, let $\chi \in C_c^\infty([0, \infty))$ be a cut-off function with $\chi(0) = 1$. By integration by parts, $ \hat{h}_\varepsilon$ at $\xi = 0$ is given by
$$ \hat{h}_\varepsilon (0) = - \int_{0}^{\infty} \partial_\xi (\hat{h}_\varepsilon \chi) d\xi = - \int_{0}^{\infty} (\partial_\xi \hat{h}_\varepsilon) \chi d\xi - \int_{0}^{\infty} \hat{h}_\varepsilon \chi' d\xi. $$
Plugging the explicit expression for $\partial_\xi \hat{h}_\varepsilon$ from \eqref{eq:perturbed} into this formula gives
$$ \hat{h}_\varepsilon(0) = i \int_0^\infty \Big( \widehat{\Pi u_0} + z \hat{h}_\varepsilon + 3t \widehat{T_{u_0}^2 h_\varepsilon} \Big) \chi \, d\xi - \int_{0}^{\infty} \hat{h}_\varepsilon \chi' d\xi + I_\varepsilon, $$
where 
$$ I_\varepsilon = \ii \varepsilon \int_0^\infty \Big( 3t \varepsilon \xi^2 \hat{h}_\varepsilon - 6t \xi \widehat{T_{u_0} h_\varepsilon} + 3t \widehat{T_{D u_0} h_\varepsilon} \Big) \chi \, d\xi. $$
Using the same argument as before, the integral within $I_\varepsilon$ is bounded uniformly in $\varepsilon$, implying $|I_\varepsilon| \le C \varepsilon \to 0$.
Then, taking the limit $\varepsilon \to 0$ in the remaining terms, and using the weak convergence $\hat{h}_\varepsilon \rightharpoonup \hat{h}$ and the weak continuity of the bounded operator $T_{u_0}^2$, we obtain
	\begin{align*}
		\lim_{\varepsilon \to 0} \hat{h}_\varepsilon(0) 
		& = \ii \int_0^\infty \Big( \widehat{\Pi u_0} + z \hat{h} + 3t \widehat{T_{u_0}^2 h} \Big) \chi \, d\xi - \int_{0}^{\infty} \hat{h} \chi' d\xi  \\
		& = - \int_0^\infty (\partial_\xi \hat{h}) \chi \, d\xi - \int_{0}^{\infty} \hat{h} \chi' d\xi 
		= - \int_{0}^{\infty} \partial_\xi (\hat{h} \chi) d\xi = \hat{h}(0), 
	\end{align*} 
where we used the equation \eqref{eq:nonpertubedequation3} in Fourier space, $\partial_\xi \hat{h} = -i ( \widehat{\Pi u_0} + z \hat{h} + 3t \widehat{T_{u_0}^2 h} )$. This completes the proof. 

\end{proof}

Now, using the Lemma above, we present the proof of Proposition \ref{prop:BO3-weak}.
\begin{proof}[Proof of Proposition \ref{prop:BO3-weak}]
By $E_0$ conservation law, we have 
$$
\| u^\varepsilon (t, \cdot) \|_{L^2} =  \| u_{0}\|_{L^2}, \qquad \forall t \in \mathbb{R}. 
$$
Hence, for any fixed $t$, the family $\{u^\varepsilon(t)\}_{\varepsilon>0}$ is bounded in $L^2(\mathbb{R})$. 
By weak compactness, there exists a subsequence $\varepsilon_j$ tending to $0$ such that $u^{\varepsilon_j}(t)$ converges weakly in $L^2(\mathbb{R})$ to some limit $ZD[u_0](t, x)$. 
Since $u^\varepsilon$ is real valued, so is $ZD[u_0](t, x)$, hence, on the real line, we have
$$ZD[u_0](t, x) = \Pi ZD[u_0](t, x) + \overline{\Pi ZD[u_0](t, x)}.$$
By the definition of $I_+$, and the third point of Lemma \ref{lemma:perturbedbo3}, as $\varepsilon \to 0$, we have the convergence
$$ \Pi u^\varepsilon (t, z) = \frac{1}{2 \ii \pi} \, I_+ ( h_\varepsilon ) = \hat{h}_\varepsilon(0) \to \hat{h}(0) = \frac{1}{2 \ii \pi }\, I_+ \Bigl[(X^* - 3t\,T_{u_0}^2 - z\,\mathrm{Id})^{-1} (\Pi u_0)\Bigr], $$
where $h_\varepsilon$ and $h$ are the solutions to the equations \eqref{eq:pertubedequation3} and \eqref{eq:nonpertubedequation3}, respectively,  given in Lemma \ref{lemma:perturbedbo3}.
Therefore,
\begin{align}\label{eq:Pi-ZD}
	\forall z \in \mathbb{C}_+, \quad \Pi ZD[u_0](t, z) = \frac{1}{2 \ii \pi }\, I_+ \Bigl[(X^* - 3t\,T_{u_0}^2 - z\,\mathrm{Id})^{-1} (\Pi u_0)\Bigr].
\end{align}
We now prove the two additional properties of the map $u_0 \mapsto ZD[u_0]$.
The first property follows directly from the formula \eqref{eq:Pi-ZD}, so it remains to prove the second.
Since $u_0^\delta \to u_0$ strongly in $L^2(\mathbb R)$ and the sequence is uniformly bounded in $L^\infty$, we have 
$
T_{u_0^\delta} \xrightarrow[s]{} T_{u_0}
$. 
By uniform boundedness, the standard decomposition $T_{u_0^\delta}^2 - T_{u_0}^2 = T_{u_0^\delta} (T_{u_0^\delta} - T_{u_0} ) + (T_{u_0^\delta} -  T_{u_0} ) T_{u_0} $ implies
$$
T^2_{u_0^\delta} \xrightarrow[s]{\delta\to0}\; T^2_{u_0},$$
which results in
$$
(X^* - 3t\,T_{u_0^\delta}^2 - z\mathrm{Id})^{-1}
\;\xrightarrow[s]{\delta\to0}\;
(X^* - 3t\,T_{u_0}^2 - z\mathrm{Id})^{-1}.$$
Applying the formula \eqref{eq:Pi-ZD} again and using the strong
convergence of $\Pi u_0^\delta$ to $\Pi u_0$, we conclude that, for every $t\in\mathbb R$,
$$
ZD[u_0^\delta](t)
\rightharpoonup
ZD[u_0](t)
\quad \text{in } L^2(\mathbb R).$$
This completes the proof.
\end{proof}

We now complete the proof of Theorem \ref{thm: mainBO3eps}. By Proposition \ref{prop:BO3-weak}, it remains to identify the resolvent formula \eqref{eq:BO3T2}. Since the operator $T_{u_0}^2$ appears explicitly in that formula, it is necessary to obtain a precise formula for this
operator. We begin by treating the case where $u_0$ is a rational function with simple poles.

\begin{lemma}\label{lemma:Tu0Tu02}
Let $u_0(y)$ be a rational function with real coefficients of the form 
	$$
	u_0(y) = \sum_{j=1}^N \frac{c_j}{y - p_j} + \frac{\overline{c_j}}{y - \overline{p_j}}, 
	\qquad \textrm{Im} \, p_j > 0,
	$$
Then we have
	\begin{enumerate}
		\item $T_{u_0} f(y) = u_0(y) f(y) - \sum_{j=1}^N \frac{c_j\, f(p_j)}{y - p_j}, 
		$
		\item  $$
		\begin{aligned}
			T_{u_0}^2 f(y) 
			& =  u_0(y)^2 f(y) -  u_0(y)
		\sum_{j=1}^N \frac{c_j f(p_j)}{y - p_j} \\
			& - 
			\sum_{j=1}^N \frac{1}{y - p_j } \Bigg[ 
			c_j^2 f'(p_j) + \sum_{l \neq j} \frac{c_j c_l (f(p_l) - f(p_j)  )}{p_l - p_j} + c_j \sum_{l=1}^N \frac{\overline{c_l}\, f(p_j)}{p_j - \overline{p_l}} 
			\Bigg] 
		\end{aligned}.
		$$
	\end{enumerate}
\end{lemma}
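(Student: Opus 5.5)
The plan is to compute $T_{u_0}f=\Pi(u_0f)$ for $f\in L^2_+(\R)$, viewed as a holomorphic function on $\C_+$. We split $u_0f$ into pieces that either lie in $L^2_+$ or are killed by $\Pi$, and then simply iterate the resulting formula.

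\textbf{Step 1: the formula for $T_{u_0}$.} Write $u_0=u_+ + u_-$ with
$$u_+(y)=\sum_j \frac{\overline{c_j}}{y-\overline{p_j}}, \qquad u_-(y)=\sum_j \frac{c_j}{y-p_j}.$$
Since $\overline{p_j}\in\C_-$, the function $u_+$ is holomorphic on $\C_+$ and lies in $L^2_+$. Hence $u_+f$ is holomorphic on $\C_+$ with the appropriate $L^2$ bounds. Since $u_+f\in L^2$ by boundedness of $u_+$, we get $u_+f\in L^2_+$ and $\Pi(u_+f)=u_+f$.

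For the pole term we write
$$\frac{c_jf(y)}{y-p_j}=\frac{c_j(f(y)-f(p_j))}{y-p_j}+\frac{c_jf(p_j)}{y-p_j}.$$
The first quotient is holomorphic on $\C_+$, because the singularity at $p_j$ is removable, and it is in $L^2$, so it belongs to $L^2_+$. The second term $1/(y-p_j)$ has its only pole in $\C_+$, so it lies in $L^2_-=\overline{L^2_+}$ and is killed by $\Pi$. Summing gives
$$\Pi(u_-f)=u_-f-\sum_j\frac{c_jf(p_j)}{y-p_j},$$
which is (1). To be careful, we justify membership in $L^2_+$ via the Paley--Wiener characterization recalled in Section~\ref{subsection:notation}, noting that $(f(y)-f(p_j))/(y-p_j)$ decays like $|y|^{-1}$ plus $f/(y-p_j)$.

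\textbf{Step 2: iterate to get $T_{u_0}^2$.} Set $g=T_{u_0}f=u_0f-\sum_j c_jf(p_j)/(y-p_j)$ and apply (1) to $g$:
$$T_{u_0}^2f=u_0g-\sum_j\frac{c_j\,g(p_j)}{y-p_j}.$$
The term $u_0g$ produces $u_0^2f-u_0\sum_j c_jf(p_j)/(y-p_j)$, which are exactly the first two terms of (2). It remains to evaluate $g(p_j)$, meaning the value at $p_j$ of the holomorphic extension of $g$.

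\textbf{Step 3: evaluating $g(p_j)$ — the main obstacle.} Since $u_0g$ itself has a pole at $p_j$, the evaluation must be done on the regularized expression. Near $p_j$ we write
$$g=u_+f+\frac{c_j\bigl(f-f(p_j)\bigr)}{y-p_j}+\sum_{l\neq j}\frac{c_l\bigl(f-f(p_l)\bigr)}{y-p_l}.$$
Evaluating at $y=p_j$ term by term:
\begin{itemize}[label=$\circ$]
\item the first term gives $u_+(p_j)f(p_j)=f(p_j)\sum_l \overline{c_l}/(p_j-\overline{p_l})$;
\item the middle term tends to $c_jf'(p_j)$;
\item each remaining term gives $c_l\bigl(f(p_j)-f(p_l)\bigr)/(p_j-p_l)=c_l\bigl(f(p_l)-f(p_j)\bigr)/(p_l-p_j)$.
\end{itemize}
Multiplying by $c_j$ reproduces the bracket in (2):
$$c_j^2f'(p_j)+\sum_{l\neq j}\frac{c_jc_l\bigl(f(p_l)-f(p_j)\bigr)}{p_l-p_j}+c_j\sum_l\frac{\overline{c_l}f(p_j)}{p_j-\overline{p_l}}.$$
The only genuine care needed is the regularization at the double-pole step and the bookkeeping that distinguishes $l=j$ from $l\neq j$. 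This requires the $p_j$ to be distinct, which we assume implicitly in the statement.
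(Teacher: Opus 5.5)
Your proposal is correct and follows the paper's proof essentially step for step. You split $u_0$ into the part holomorphic on $\C_+$ and the part with poles at the $p_j$ to get (1), apply (1) to $g=T_{u_0}f$, and compute $g(p_j)$ from the regularized representation of $T_{u_0}f$; the $l=j$ term produces $c_jf'(p_j)$, exactly as in the paper's formula for $T_{u_0}f(p_j)$. The only difference is that you justify the rational-symbol identity $\Pi\bigl(f/(y-p_j)\bigr)=(f-f(p_j))/(y-p_j)$ via the Hardy-space characterization, whereas the paper simply invokes it as a standard property of Toeplitz operators.
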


\begin{proof}
Using a standard property of Toeplitz operators	with rational symbols, we have
	\begin{align}\label{eq:Tu0fy}
		T_{u_0} f(y) 
		 = \sum_{j=1}^N  c_j T_{ \frac{1}{y - p_j} } f(y)  + \sum_{j=1}^N  \bar c_j T_{ \frac{1}{y - \overline{p}_j}} f(y) 
		= \sum_{j=1}^N c_j \frac{f(y) - f(p_j) }{y - p_j} + \sum_{j=1}^N \frac{\overline{c_j}\, f(y)}{y - \overline{p}_j} .
	\end{align}
By the definition of $u_0(y)$, we obtain the first identity,
	$$T_{u_0} f(y) =  u_0(y) f(y) - \sum_{j=1}^N \frac{c_j\, f(p_j)}{y - p_j}.$$
To prove the second identity, we apply $T_{u_0}$ to \eqref{eq:Tu0fy},
\begin{align}\label{eq:Tu02fy}
		T^2_{u_0} f(y)   \nonumber
		 = T_{u_0} (T_{u_0} f)(y)  \nonumber	
		& =  \sum_{j=1}^N c_j \frac{T_{u_0} f(y) - T_{u_0} f(p_j) }{y - p_j} + \sum_{j=1}^N \frac{\overline{c_j}\, T_{u_0} f(y)}{y - \overline{p}_j}\\\nonumber
		& = u_0(y) T_{u_0} f(y) - \sum_{j=1}^N c_j \frac{ T_{u_0} f(p_j) }{y - p_j}\\
		& = u_0^2(y) f(y) - u_0(y) \sum_{j=1}^N \frac{c_j\, f(p_j)}{y - p_j} - \sum_{j=1}^N c_j \frac{ T_{u_0} f(p_j) }{y - p_j},
\end{align}
where we used the definition of $u_0(y)$ and the first identity. We observe that the right-hand side of \eqref{eq:Tu02fy} contains the singular term
$T_{u_0}f(p_j)$. To compute it, we use the representation \eqref{eq:Tu0fy}, and consider the sum
\begin{align*}
\sum_{l=1}^N c_l
\frac{ f(y)-f(p_l)}{y-p_l}
= c_j  \frac{f(y)-f(p_j)}{y-p_j} + \sum_{l\neq j}^N c_l
\frac{ f(y)-f(p_l)}{y-p_l}.
\end{align*}
For terms $l\neq j$, the denominator does not vanish at $y=p_j$, so we can substitute
$y=p_j$ directly. The term $l=j$ yields the contribution $c_j f'(p_j)$. Consequently, we obtain
\begin{align}\label{eq:Tu_0fpj}
T_{u_0} f(p_j) = c_j f'(p_j) + \sum_{l \neq j} \frac{c_l (f(p_l) - f(p_j))}{p_l - p_j} + \sum_{l=1}^N \frac{\overline{c_l}\, f(p_j)}{p_j - \overline{p_l}}.
\end{align}
Substituting this value into the formula \eqref{eq:Tu02fy}, we obtain the second identity. 
\end{proof}
It is now time to prove the zero dispersion limit geometric formula \eqref{eq:ZDu_0geom} in Theorem \ref{thm: mainBO3eps}.	
\begin{proof}[Proof of Theorem \ref{thm: mainBO3eps}]	
We first consider the case when  $u_0(y)$ is a rational function with real coefficients, with no pole on the real line, 
$$
u_0(y) = \frac{P_0(y)}{Q_0(y)},
$$
where, for some positive integer $N$, $P_0(y)$ is a real polynomial of degree at most $2N -1$, and $Q_0(y)$  is a monic real polynomial of degree $2N$.
With no loss of generality, we may assume that $Q_0$ has only
simple poles in the complex domain. Indeed, the rational function $u_0(y) = \frac{P_0(y)}{Q_0(y)}$ can be approximated in $L^2(\mathbb{R})$ by rational functions $u_0^\delta(y) = \frac{P_0(y)}{Q^\delta_0(y)}$ whose denominators have only simple zeros, obtained by slightly perturbing the multiple zeros of $Q_0 (y)$. Then the second property in Proposition \ref{prop:BO3-weak} ensures this assumption.  
We can then write
\begin{equation}\label{eq:initial-datum}
u_0(y)= \sum_{j=1}^{N} \frac{c_j}{y-p_j}
+\frac{\bar c_j}{\,y-\bar p_j\,}, \qquad \textrm{Im } p_j >0.	
\end{equation}
By the definition of $\Pi$, we have 
\begin{align}\label{eq:Pi_u_0}
\Pi u_0 (y) =  \sum_{j=1}^{N} \frac{\bar c_j}{y- \bar p_j} = u_0(y) - \sum_{j=1}^{N} \frac{c_j}{y-p_j}. 
\end{align}
Now, we consider the characteristic equation,  
$y - 3t\, u_0(y)^2 = z,$
which is equivalent to a polynomial equation of degree $4N+1$,
$$
y Q^2_0 (y) - 3t P_0^2(y) = z Q^2_0 (y).
$$
Observe that, for $z= x \in \mathbb{R}$, it has at least one real root and in fact an odd number $2\ell(t,x) +1 $ of real roots, counted with multiplicity. For a given $t$, except for a finite set $\Sigma_t$  of points $x$, we may consider the case where $\ell(t, x) = \ell \in \mathbb{Z}_{\geq 0}$, and where these roots are simple. Hence, for $x \in \mathbb{R} \setminus \Sigma_t$ there are exactly $2\ell+1$ real simple roots, which we label as
$$
y_0(t,x) < y_1(t,x) < \cdots < y_{2\ell}(t,x),$$
Moreover, the remaining $4N-2\ell$ solutions form complex conjugate pairs
$$
\text{Im } 	y_{2p-1}(t,x) >0, \quad y_{2p-1}(t,x) = \overline{y_{2p}(t,x)}, \qquad p=\ell+1,\dots,2N.$$
Next, let us define
$$
F(y;z) := y - 3t u_0(y)^2 - z.$$
Then, at any simple real root $y = y_k(t,x)$, $0 \leq k \leq 2N $, we have
$$
\partial_y F(y_k(t,x);x) = 1 - 6t\,u_0(y_k(t,x))u_0'(y_k(t,x)) \neq 0.
$$
By the continuity of $F(y;z)$, the sign of $\partial_y F(y_k(t,x);x)$ alternates at consecutive simple real zeros. Hence,
$$
\operatorname{sign}\bigl(1 - 6t\,u_0(y_k(t,x))u_0'(y_k(t,x))\bigr) = (-1)^k.$$
As all roots are simple, the implicit function theorem ensures that each root $y_k(t,x)$ extends to an analytic function $y_k(t,z)$ of $z$ in a neighborhood of the real axis. 
Differentiating the identity $F(y_k(t,z);z)=0$ with respect to $z$ yields
$$
\frac{\partial y_k}{\partial z}(t,z)
= \frac{1}{1 - 6t\,u_0(y_k(t,z))u_0'(y_k(t,z))}.
$$
In particular, at $z = x \in \mathbb{R}$, this derivative is real and has the sign $(-1)^k$.
Since $y_k(t, z)$ is analytic and real-valued for real $z$, the Cauchy--Riemann equations imply that, at $z = x \in \mathbb{R}$,
$$
\frac{\partial \text{Im } (y_k(t,z))}{\partial \text{Im } (z)}
=
\frac{\partial \text{Re } (y_k(t,z))}{\partial \text{Re } (z)}
= 
\text{Re }\left(\frac{\partial y_k}{\partial z}\right)
= 
\frac{1}{1 - 6t\,u_0(y_k(t,x))u_0'(y_k(t,x))},
$$
We observe that the variation of the imaginary part of $y_k$ under the vertical displacement of $z$ is governed by the sign of $\frac{\partial y_k}{\partial z}$, which is $(-1)^k$. 
Let $z = x + i\varepsilon$ with $\varepsilon > 0$ sufficiently small, we see that, the roots with even index lie in the upper half-plane, while those with odd index lie in the lower half-plane, namely,
\begin{equation*}
	\text{Im } (y_{k}(t,z))	
	\begin{cases}
		> 0, \qquad k \text{ even}\\
		< 0, \qquad k \text{ odd}.
	\end{cases}
\end{equation*}
For such a complex number $z$, the formula \eqref{eq:BO3T2} gives
$$
\Pi ZD [u_0](t,z) = \frac{1}{2 \ii \pi }\, I_+ \Bigl[(X^* - 3t\,T_{u_0}^2 - z\,\mathrm{Id})^{-1} \Pi u_0\Bigr].
$$
Let us define
	$
	f_{t,z} := (X^* - 3t\,T_{u_0}^2 - z\,\mathrm{Id})^{-1} \Pi u_0. 
	$
Hence,
	$$
	\Pi ZD [u_0] (t, z):= \frac{1}{2 \ii \pi }\, I_+ [f_{t,z}].
	$$
We shall show that $f_{t,z}$ is in fact a rational function. Recall that for any suitable function $f$, 
	$$
	X^* f(y) = y f(y) + \frac{1}{2 \ii \pi } I_+(f),
	$$
Applying this identity to $f_{t,z}$, we obtain
\begin{align*}
		\Pi u_0
		& = (X^* - 3t\,T_{u_0}^2 - z\,\mathrm{Id})\, f_{t,z}(y) \nonumber\\
		&= X^* f_{t,z}(y) - z\, f_{t,z} (y)- 3t\, T_{u_0}^2 f_{t,z}(y) \nonumber\\
		& = (y - z) f_{t,z}(y) + \frac{1}{2 \ii \pi } I_+(f_{t,z})
		- 3t\, T_{u_0}^2 f_{t,z}(y)\\
		& = (y - z) f_{t,z}(y) + \lambda (t,z)
		- 3t\, T_{u_0}^2 f_{t,z}(y).\nonumber
	\end{align*}
where $\lambda(t,z) := \frac{1}{2\pi \ii}\, I_+(f_{t,z})$. 
Applying the Lemma \ref{lemma:Tu0Tu02}, we have
	$$
	\Pi u_0 = (y - z - 3 t\, u_0^2(y)) f_{t,z}(y)  + \lambda(t,z) + u_0(y)
	\sum_{j=1}^N \frac{\mu_{j,1}}{y - p_j} +  \sum_{i=1}^N \frac{\mu_{j,2}}{y - p_j} ,
	$$
where 
	$$
	\mu_{j,1} = 3t c_j f_{t,z}(p_j), \quad
	\mu_{j,2} = 3t \Bigg[ 
	c_j^2 f'(p_j) + \sum_{l \neq j} \frac{c_j c_l (f(p_l) - f(p_j)  )}{p_l - p_j} + c_j \sum_{l=1}^N \frac{\overline{c_l}\, f(p_j)}{p_j - \overline{p_l}} 
	\Bigg].
	$$
Recall the relation \eqref{eq:Pi_u_0} between $u_0(y)$ and $\Pi u_0(y)$ , we  obtain
$$
	(y - z - 3 t\, u_0^2(y)) f_{t,z}(y)
	= -\lambda(t,z) + u_0(y)
	+ \sum_{j=1}^N  \frac{u_0(y) (- \mu_{j, 1} (t, z))}{y - p_j}
	+ \sum_{j=1}^N \frac{- \mu_{j,2}  (t, z)- c_j}{y - p_j}.
$$
Therefore, $f_{t,z}$ is rational,
$$
	f_{t,z} (y) = \frac{-\lambda(t,z) + u_0(y)
		+ \sum_{j=1}^N  \frac{u_0(y) (- \mu_{j,1} (t, z))}{y - p_j}
		+ \sum_{j=1}^N \frac{- \mu_{j, 3} (t, z)}{y - p_j}} {y-z - 3t u_0^2(y)},
$$
where $\mu_{j, 3} (t, z) = \mu_{j, 2} (t, z) + c_j$.
Let $y$ be any zero in the upper half-plane of the equation 
\begin{equation}\label{eq:equationcharac}
	y-z - 3t u_0^2(y) = 0.
\end{equation}
Evaluating the identity for $	f_{t,z}$ at these points eliminates the denominator and yields a linear relation between the unknown coefficients. Denoting these zeros by $\{y_{2k}\}_{k=0}^{2N}$, we obtain the following $(2N+1) \times (2N+1) $ linear system
	\begin{equation}\label{eq:lambdatz3}
		\lambda (t, z)
		+  \sum_{j=1}^N  \frac{u_0(y)  \mu_{j,1}(t, z)}{y_{2k} - p_j}
		+ \sum_{j=1}^N \frac{\mu_{j,3}(t, z)}{y_{2k} - p_j}
		=  u_0 (y_{2k}),\; j = 0, 1, \cdots, 2N.
	\end{equation}
The Cramer's rule thus gives
\begin{equation}\label{eq:lambdaND}
\lambda (t,z) = \frac{\mathcal{N}}{\mathcal{D}},
\end{equation}
where $\mathcal{D}$ denotes the determinant of the coefficient matrix, 
\begin{equation}\label{eq:determinant_D}
\mathcal{D} =
\begin{vmatrix}
	\dfrac{1}{y_0 - p_1} & \cdots & \dfrac{1}{y_0 - p_N}   & \dfrac{u_0(y_0)}{y_0 - p_1} & \cdots &  \dfrac{u_0(y_0)}{y_0 - p_N}  & 	1  \\[6pt]
	\dfrac{1}{y_2 - p_1} & \cdots & \dfrac{1}{y_2 - p_N}   & \dfrac{u_0(y_2)}{y_2 -p_1} & \cdots &  \dfrac{u_0(y_2)}{y_2 - p_N} & 	1  \\[6pt]
	\cdots \\[6pt]
	\dfrac{1}{y_{4N} - p_1} & \cdots & \dfrac{1}{y_{4N} - p_N}  & \dfrac{u_0(y_{4N})}{y_{4N} - p_1} & \cdots &  \dfrac{u_0(y_{4N})}{y_{4N} - p_N} & 1
\end{vmatrix},
\end{equation}
and $\mathcal{N}$ is obtained by replacing its last column with the right-hand side vector,
\begin{equation}\label{eq:determinant_N}
\mathcal{N} =
\begin{vmatrix}
	\dfrac{1}{y_0 - p_1} & \cdots & \dfrac{1}{y_0 - p_N}   & \dfrac{u_0(y_0)}{y_0 - p_1} & \cdots &  \dfrac{u_0(y_0)}{y_0 - p_N}  & 	u_0(y_0) \\[6pt]
	\dfrac{1}{y_2 - p_1} & \cdots & \dfrac{1}{y_2 - p_N}   & \dfrac{u_0(y_2)}{y_2 -p_1} & \cdots &  \dfrac{u_0(y_2)}{y_2 - p_N} & 	u_0(y_2)  \\[6pt]
	\cdots \\[6pt]
	\dfrac{1}{y_{4N} - p_1} & \cdots & \dfrac{1}{y_{4N} - p_N}  & \dfrac{u_0(y_{4N})}{y_{4N} - p_1} & \cdots &  \dfrac{u_0(y_{4N})}{y_{4N} - p_N} & 	u_0(y_{4N}) 
\end{vmatrix}.
\end{equation}

Before continuing the proof, let us state the following proposition in order to reduce the two determinants above to Vandermonde-type determinants.
\begin{proposition}\label{prop:ND_n=2}
Let $u_0$ be as defined in Theorem \ref{thm: mainBO3eps}. For a fix $z \in \mathbb{C}$, let $\{y_{2k}\}_{k=0}^{2N}$ be the zeros of the characteristic equation \eqref{eq:equationcharac} located in the upper half-plane. 

Then, there exists a constant $K$, depending on $N,t, p_1,\dots,p_N$,  the determinants $\mathcal{D}$ and  $\mathcal{N}$ given in \eqref{eq:determinant_D} and \eqref{eq:determinant_N}, respectively, can be reduced to the following determinants, respectively,
\begin{align*}
& \mathcal{D} =
\frac{K}{\prod_{k=0}^{2N} \prod_{\nu=1}^N ( 3t v_k^2 + z - p_\nu)}
\det \mathcal{V}(v_0, \cdots, v_{2N})
, \\
&	\mathcal{N}
=
\frac{K}{\prod_{k=0}^{2N} \prod_{\nu=1}^N ( 3t v_k^2 + z - p_\nu)} \det \widetilde{\mathcal{V}}(v_0, \cdots, v_{2N}),
\end{align*}
where $v_k : = u_0(y_{2k})$ for $ 0 \leq k \leq 2N$, and $\mathcal{V}$
is the standard Vandermonde matrix and
$\widetilde{\mathcal{V}}$ a missing power Vandermonde 
\begin{align}\label{eq:vander}
\mathcal{V}
= 
\begin{pmatrix}
	1 & v_{k} & \dots &  v_{k}^{2N-1} &  v_{k} ^{2N} \\
\end{pmatrix}_{k=0}^{2N},\quad
\widetilde{\mathcal{V}}
= 
\begin{pmatrix}
	1 & v_{k} & \dots &  v_{k}^{2N-1} &  v_{k} ^{2N+1} \\
\end{pmatrix}_{k=0}^{2N}.
\end{align}
\end{proposition}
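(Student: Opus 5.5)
The plan is to rewrite each entry of $\mathcal{D}$ and $\mathcal{N}$ in terms of the values $v_k = u_0(y_{2k})$ alone, using the characteristic equation. At every root $y_{2k}$ of \eqref{eq:equationcharac} we have $y_{2k} = z + 3t v_k^2$. Hence
\[
y_{2k} - p_\nu = 3t v_k^2 + z - p_\nu .
\]
Row $k$ of $\mathcal{D}$ then becomes
\[
\Bigl(\tfrac{1}{3tv_k^2+z-p_1},\dots,\tfrac{1}{3tv_k^2+z-p_N},\ \tfrac{v_k}{3tv_k^2+z-p_1},\dots,\tfrac{v_k}{3tv_k^2+z-p_N},\ 1\Bigr),
\]
and row $k$ of $\mathcal{N}$ is the same except that its last entry is $v_k$. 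Both determinants are therefore determinants of functions of the single variable $v_k$ evaluated at $2N+1$ points.

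Next I multiply row $k$ by
\[
P(v_k) := \prod_{\nu=1}^N (3tv_k^2+z-p_\nu),
\]
a polynomial of degree $2N$ in $v_k$. This is the origin of the prefactor $\prod_k\prod_\nu(3tv_k^2+z-p_\nu)^{-1}$. After this multiplication, the entries of the new row are polynomials in $v_k$:
\begin{itemize}
\item[(a)] $P_\nu(v):=P(v)/(3tv^2+z-p_\nu)$, which is even and of degree $2N-2$;
\item[(b)] $vP_\nu(v)$, which is odd and of degree $2N-1$;
\item[(c)] in the last column, $P(v)$ (degree $2N$) for $\mathcal{D}$, and $vP(v)$ (degree $2N+1$) for $\mathcal{N}$.
\end{itemize}
The determinant is then that of a matrix $(Q_m(v_k))_{k,m}$ for $2N+1$ polynomials $Q_m$. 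Writing each $Q_m$ in the monomial basis factors it as the Vandermonde matrix in $v_k$ (up to degree $2N+1$) times a coefficient matrix. This is the Vandermonde reduction.

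The key structural step is to show that the $2N$ polynomials $P_1,\dots,P_N$ span exactly the even polynomials of degree $\le 2N-2$. The $P_\nu$ are polynomials in $w=v^2$ of degree $N-1$. They are the Lagrange-type basis attached to the distinct nodes $w_\nu=(p_\nu-z)/(3t)$, and the nodes are distinct because the $p_\nu$ are distinct. So $\{P_\nu\}$ is a basis of polynomials in $w$ of degree $\le N-1$, and the change-of-basis determinant is $\prod_\nu P_\nu(\sqrt{w_\nu})$ up to sign. By the same argument, $\{vP_\nu\}$ spans the odd polynomials of degree $\le 2N-1$. Together they span all polynomials of degree $\le 2N-1$, i.e. the span of $1,v,\dots,v^{2N-1}$.

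The last column is then handled by column operations. For $\mathcal{D}$, subtracting combinations of the first $2N$ columns reduces $P(v)$, which is monic in $w$ up to the factor $(3t)^N$, to $(3t)^N v^{2N}$. For $\mathcal{N}$, the same operations reduce $vP(v)$ to $(3t)^N v^{2N+1}$. Column operations leave both determinants unchanged. This gives $\det\mathcal{V}$ and $\det\widetilde{\mathcal{V}}$ respectively, with the same constant
\[
K = \pm (3t)^N \cdot (\text{basis-change determinant})^2 .
\]
I expect the main obstacle to be the bookkeeping of $K$: checking that it depends only on $N,t,p_\nu$ and, crucially, is independent of $z$. The nodes $w_\nu$ depend on $z$, but in the Lagrange determinant $\det\bigl(P_\nu(w_\mu)\bigr)$ the product of differences $w_\nu-w_\mu=(p_\nu-p_\mu)/(3t)$ eliminates $z$. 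The remaining work is to verify that the same determinant, with the same sign conventions, appears for the even block and the odd block of columns.
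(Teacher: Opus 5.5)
Your proposal is correct and follows essentially the same route as the paper. You substitute $y_{2k}=z+3tv_k^2$, clear denominators row by row with $\prod_\nu(3tv_k^2+z-p_\nu)$, and use the even/odd block-triangular structure of the coefficient matrix; your column operations on the last column are just a rephrasing of the paper's factorization through $\mathcal V$ and $\widetilde{\mathcal V}$.

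One small correction, which does not affect the argument: the coefficient-change determinant of $\{P_\nu\}$ is not $\prod_\nu P_\nu(\sqrt{w_\nu})$ but that product divided by the node Vandermonde $\prod_{\mu<\nu}(w_\nu-w_\mu)$. For $N=2$ it equals $3t(p_1-p_2)$, not $-(p_1-p_2)^2$. Both factors depend only on $t$ and the $p_\nu$, so your Lagrange-node argument still shows that $K\neq 0$ and that $K$ is independent of $z$, a point the paper asserts without detailed verification.
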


\begin{proof}
We first treat the determinant $\mathcal{D}$. 
By the definition of $y_{2k}$, we have,  for $ 0 \leq k \leq 2N$, 
$$
	\frac{1}{y_{2k} - p_\nu}
	= 
	\frac{1}{3 t v_k^2 + z - p_\nu}, \quad
	\frac{v_k}{y_{2k} - p_\nu}
	= 
	\frac{v_k}{3 t v_k^2 + z - p_\nu}, \quad \nu=1, \cdots, N.$$
Hence the $k$-th row of the determinant $\mathcal{D}$ can be written in the form
\begin{align*}
\Big(
\frac{1}{3 t v_k^2 + z - p_1}, \cdots, \frac{1}{3 t v_k^2 + z - p_N},\;
\frac{v_k}{3 t v_k^2 + z - p_1}, \cdots, \frac{v_k}{3 t v_k^2 + z - p_N}, 1 \Big).
\end{align*}
Define the polynomials (in the formal variable $v$)
$$
Q(v) := \prod_{\nu=1}^N (3t v^2 + z - p_\nu),
\qquad
P_{\nu}(v) :=  \prod_{\mu\neq\nu}^N (3t v^2 + z - p_\mu).$$
Later these polynomials will be evaluated at $v = v_k$.

Multiplying the $k$-th row by $Q(v_k)$, we obtain
$$
\mathcal D
=
\frac{1}{\prod_{k=0}^{2N}  Q(v_k)}
\det
\Big(
P_1(v_k),\dots,P_N(v_k),\;
v_kP_1(v_k),\dots,v_k P_N(v_k),\;
Q(v_k)
\Big)_{k=0}^{2N}.$$
We observe that 
\begin{align*}
\deg Q(v) = 2N, \qquad \deg \{ P_{\nu} (v), v P_{\nu} (v) \} \leq  2N -1.
\end{align*}
so these polynomials have degree at most $2N$, hence belong to $\mathbb{C}[v]_{\le 2N}$,
which is a vector space of dimension $2N+1$. Since the polynomials $\{P_1,\dots,P_N,\;
v P_1,\dots,v P_N,\;
Q \}$ are linearly independent,  they form a basis of the vector space  $\mathbb{C}[v]_{\le 2N}$.

Expanding these polynomials in the monomial basis $(1,v,\dots,v^{2N})$, we obtain a factorization
$$
\Big(
P_1(v_k),\dots,P_N(v_k),\;
v_kP_1(v_k),\dots,v_k P_N(v_k),\;
Q(v_k)
\Big)_{k=0}^{2N} = \mathcal V(v_0,\dots,v_{2N}) \cdot \mathrm{Coe}_D,
$$
where $\mathcal V$ is the Vandermonde matrix and $\mathrm{Coe}_D$ is the coefficient matrix. Therefore
$$
\mathcal D
=
\left( \frac{1}{ \prod_{k=0}^{2N} Q(v_k)} \right)
\det \mathcal V(v_0,\dots,v_{2N}) \cdot \det(\mathrm{Coe_D}).
$$
Next, we study the structure of $\mathrm{Coe_D}$. 
Expanding all polynomials $P_{\nu}(v)$,
\begin{align*}\label{eq:Pnu}
	P_{\nu}(v) =  \prod_{\mu\neq\nu}^N (3t v^2 + z - p_\mu) = \sum_{r=0}^{N-1} A_{r,\nu} v^{2r}, \quad \text{with} \quad A_{r,\nu} = e^{(\nu)}_{N-1-r} (3 t)^r.
\end{align*}
where $e^{(\nu)}_{N-1-r}$ is precisely the symmetric polynomial, and the superscript $(\nu)$ means the removal of the $\nu$-th one.
Observe that $P_\nu(v)$ and $Q(v)$ contain only even powers of $v$, whereas $vP_\nu(v)$ contains only odd powers. 
We reorder the monomial basis as
$$
(1, v^2, \dots, v^{2N-2} \mid v, v^3, \dots, v^{2N-1} \mid v^{2N}),
$$
and order the columns as
$$
(P_1,\dots,P_N \mid vP_1,\dots,vP_N \mid Q),
$$
then the coefficient matrix $\mathrm{Coe}_{new}$ becomes the block upper triangular form
$$
\mathrm{Coe}_{D, new} =
\begin{pmatrix}
	A & 0 & * \\
	0 & A & 0 \\
	0 & 0 & (3t)^N 
\end{pmatrix}, \quad \text{with} \quad
A = (A_{r,\nu})_{r = 0, \cdots, N-1}^{\nu = 1, \cdots, N}.
$$
Reordering the monomial basis introduces a factor $(-1)^{\frac{N(N-1)}{2}}$, hence
$$
\det(\mathrm{Coe}_D) = (-1)^{\frac{N(N-1)}{2}} (3t)^N  (\det A)^2 : = K, 
$$
which proves the reduction formula for $\mathcal{D}$. 

We now turn to the determinant $\mathcal{N}$. Proceeding in exactly the same way, we obtain
$$
\mathcal N
=
\frac{1}{\prod_{k=0}^{2N}  Q(v_k)}
\det
\Big(
P_1(v_k),\dots,P_N(v_k),\;
v_kP_1(v_k),\dots,v_k P_N(v_k),\;
v_kQ(v_k)
\Big)_{k=0}^{2N}.$$
The only difference from the previous case is that the last polynomial is now $v Q(v)$, which has degree $2N+1$. All other polynomials have degree at most $2N -1$.  
In particular, no polynomial contains a $v^{2N}$ term. Hence the set of monomials involved is
$\{1,v,\dots,v^{2N-1},v^{2N+1}\},$ which produces the missing-power Vandermonde matrix.
Repeating the same coefficient-matrix factorization, we obtain
$$
\mathcal N
=
\left( \frac{1}{ \prod_{k=0}^{2N} Q(v_k)} \right)
\det \widetilde{\mathcal{V}}(v_0,\dots,v_{2N}) \cdot \det(\mathrm{Coe}_N).
$$
where
$$
\widetilde{\mathcal V}
=
\big( 1, v_k, \dots, v_k^{2N-1}, v_k^{2N+1} \big)_{k=0}^{2N}
$$
After reordering the monomial basis and
the polynomial columns as before, the coefficient matrix $\mathrm{Coe}_N$ again becomes block
upper triangular. Indeed, the replacement of $Q(v)$ by $vQ(v)$ only shifts the highest-degree monomial from $v^{2N}$ to $v^{2N+1}$, preserves  its coefficient $(3t)^N$, and does not affect the lower-degree block structure. Consequently, 
$$\det (\mathrm{Coe}_N) = \det(\mathrm{Coe}_D) = K. $$
This proves the reduction formula for 
$\mathcal{N}$ and completes the proof.
\end{proof}

As established in Proposition above, both $\mathcal{N}$ and $\mathcal{D}$ reduce to Vandermonde-type determinants multiplied by the same factor. Consequently, according to \eqref{eq:lambdaND}, the expression for $\lambda (t,z)$ simplifies to the ratio of the missing power Vandermonde determinant $\det \widetilde{\mathcal{V}}$ and the standard Vandermonde determinant $\det \mathcal{V}$. We give a more precise characterization of this ratio in the following lemma.
\begin{lemma}\label{lemma:ratio-Vander}
In the conditions and notations of Proposition \ref{prop:ND_n=2}, 	
the following identity holds:
	\begin{equation*}
		\det \widetilde{\mathcal{V}} = \sum_{k=0}^{2N} v_k \det \mathcal{V}.
	\end{equation*}
where $\mathcal{V}$ and $\widetilde{\mathcal{V}}$ are given in \eqref{eq:vander}.
\end{lemma}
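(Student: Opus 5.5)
The identity reads $\det \widetilde{\mathcal V} = \bigl(\sum_{k=0}^{2N} v_k\bigr)\det\mathcal V$. It is the simplest case of the classical fact that a Vandermonde determinant with one "skipped" power is a Schur polynomial times the Vandermonde determinant. Here the skipped power is $v^{2N}$, and the Schur polynomial is the first elementary symmetric function $e_1(v_0,\dots,v_{2N})$. The plan is to prove this directly with an auxiliary variable, using no symmetric-function theory.

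Set $M:=2N+1$, the number of nodes. Introduce a formal variable $w$ and consider the $(M+1)\times(M+1)$ Vandermonde determinant
$$
P(w):=\det\begin{pmatrix} 1 & v_k & \cdots & v_k^{M} \\ 1 & w & \cdots & w^{M}\end{pmatrix}_{k=0}^{2N},
$$
where the last row is $(1,w,\dots,w^M)$. There are two ways to compute $P$.

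\emph{Product formula.} The standard Vandermonde product gives
$$
P(w)=\det\mathcal V(v_0,\dots,v_{2N})\prod_{k=0}^{2N}(w-v_k).
$$
Hence the coefficient of $w^{M-1}=w^{2N}$ in $P$ equals $-\bigl(\sum_k v_k\bigr)\det\mathcal V$.

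\emph{Cofactor expansion.} Expand $P$ along its last row. The coefficient of $w^{2N}$ is the cofactor of the entry in column $2N+1$ (columns indexed from $0$ to $M$). That cofactor is $(-1)^{(M)+(M-1)}=-1$ times the minor obtained by deleting the last row and the column of $v^{2N}$. This minor is exactly $\det\widetilde{\mathcal V}$, with columns $1,v_k,\dots,v_k^{2N-1},v_k^{2N+1}$ kept in increasing order. So the coefficient of $w^{2N}$ is $-\det\widetilde{\mathcal V}$.

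Comparing the two coefficients gives $\det\widetilde{\mathcal V}=\bigl(\sum_k v_k\bigr)\det\mathcal V$. This holds as a polynomial identity in the $v_k$, so no distinctness assumption is needed. (Distinctness is still needed elsewhere for $\det\mathcal V\neq0$ when forming the ratio $\lambda=\mathcal N/\mathcal D$.)

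The only step that needs care is the sign bookkeeping in the cofactor expansion, specifically confirming that deleting the $w^{2N}$ column leaves the remaining columns in the order used in \eqref{eq:vander}. This is the main point to check, but it is routine. The sign $(-1)^{2M-1}=-1$ matches the $-e_1$ coming from the product, so the signs cancel consistently.
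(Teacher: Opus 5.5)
Your proof is correct, but it takes a different route from the paper's. You border $\mathcal V$ with the row $(1,w,\dots,w^{2N+1})$ and read off the coefficient of $w^{2N}$ twice. From the Vandermonde product formula it is $-\bigl(\sum_k v_k\bigr)\det\mathcal V$, and from the cofactor expansion along the new row it is $-\det\widetilde{\mathcal V}$. There is a harmless slip: with columns indexed from $0$, the entry $w^{2N}$ sits in column $2N=M-1$, not $2N+1$. The sign $(-1)^{M+(M-1)}=-1$ that you actually compute is the correct one.

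The paper never uses the product formula. It considers the linear functional $\Lambda(\mathcal P)=\det\bigl(1,v_k,\dots,v_k^{2N-1},\mathcal P(v_k)\bigr)_{k=0}^{2N}$ on polynomials of degree at most $2N+1$. This functional vanishes on polynomials of degree at most $2N-1$, because the last column is then a combination of the others. It also vanishes on $\mathcal Q(x)=\prod_k(x-v_k)$, because the last column is then zero. Expanding $\mathcal Q=x^{2N+1}-\bigl(\sum_k v_k\bigr)x^{2N}+\mathcal Q_{low}$ then gives $\Lambda(x^{2N+1})=\bigl(\sum_k v_k\bigr)\Lambda(x^{2N})$. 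In effect this is a single column operation, so it needs only multilinearity and alternation of the determinant, with no sign bookkeeping.

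Your bordering argument costs the product formula and a cofactor sign, but it returns more. Comparing the coefficients of every power $w^j$ shows that the minor omitting the column $v^j$ equals $e_{2N+1-j}(v_0,\dots,v_{2N})\det\mathcal V$; the lemma is the case $j=2N$. Both arguments are polynomial identities requiring no distinctness of the $v_k$. Both also carry over verbatim to the $(nN+1)$-node version used for the general hierarchy.
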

\begin{proof}
Consider the linear mapping $\Lambda : \mathbb{C}_{\leq 2N+1}[x] \to \mathbb{C}$ defined by the determinant of the matrix where the last column is the evaluation of a polynomial $\mathcal{P}$ at the points $v_k$:
\begin{equation*}
		\Lambda(\mathcal{P}) = \det \begin{pmatrix} 1 & v_{k} & \dots & v_{k}^{2N-1} & \mathcal{P}(v_k) \end{pmatrix}_{k = 0}^{2N}.
\end{equation*}
First, we observe that if $\text{deg}(\mathcal{P}) \leq 2N-1$, the last column $\mathcal{P}(V)$ is a linear combination of the first $2N$ columns. By the properties of the determinant, we have
\begin{equation}\label{eq:Lambda_low}
	\Lambda(\mathcal{P}) = 0 \quad \text{for any } \mathcal{P} \text{ such that } \text{deg}(\mathcal{P}) \leq 2N-1.
\end{equation}
Let $\mathcal{Q}(x)$ be the monic polynomial of degree $2N+1$,
\begin{equation*}
\mathcal{Q}(x) = \prod_{k=0}^{2N} (x - v_k) = x^{2N+1} - \sum_{k=0}^{2N} v_k  x^{2N} + \mathcal{Q}_{low}(x),
\end{equation*}
where $\mathcal{Q}_{low}(x)$ is a polynomial of degree at most $2N-1$. Thus,  $\Lambda(\mathcal{Q}_{low}) = 0$. Since $\mathcal{Q}(v_k) = 0$ for all $k = 0, \dots, 2N$, by the definition of $\Lambda$, we obtain
\begin{equation*}
	\Lambda(\mathcal{Q}) = \det \begin{pmatrix} 1 & v_{k} & \dots & v_{k}^{2N-1} & \mathcal{Q}(v_k) \end{pmatrix}_{k = 0}^{2N} = 0.	
\end{equation*}
Using the linearity of $\Lambda$ and the expansion of $\mathcal{Q}(x)$, we have:
\begin{align*}
0 = \Lambda(\mathcal{Q}) = \Lambda\Big( x^{2N+1} - \sum_{k=0}^{2N} v_k \, x^{2N} + \mathcal{Q}_{low}(x) \Big) = \Lambda(x^{2N+1}) - \sum_{k=0}^{2N} v_k \, \Lambda(x^{2N}) + \Lambda(\mathcal{Q}_{low}).
\end{align*}
Therefore, we obtain:
\begin{equation*}
\Lambda(x^{2N+1}) = \sum_{k=0}^{2N} v_k  \Lambda(x^{2N}).
\end{equation*}
Recognizing that $\Lambda(x^{2N+1}) = \det \widetilde{\mathcal{V}}$ and $\Lambda(x^{2N}) = \det \mathcal{V}$ completes the proof.
\end{proof}

Now, we return to the proof of Theorem \ref{thm: mainBO3eps}. Applying the Proposition \ref{prop:ND_n=2} and Lemma \ref{lemma:ratio-Vander}, we obtain
$$
\lambda(t, x) = \frac{\mathcal{N}}{\mathcal{D}} = \frac{\det \widetilde{\mathcal{V}}(v_0, \cdots, v_{2N})}{\det \mathcal{V}(v_0, \cdots, v_{2N})} =\sum_{k=0}^{2N} v_k = \sum_{k=0}^{2N}u_0(y_{2k}),
$$
where $v_k : = u_0(y_{2k})$ for $ 0 \leq k \leq 2N$, $\mathcal{V}$ and 
$\widetilde{\mathcal{V}}$ are given in \eqref{eq:vander}.
We already know that, the characteristic equation $y - 3t\, u_0(y)^2 = z$ admits $2\ell+1$ of real solutions
$
y_0(t,x) < y_1(t,x) < \cdots < y_{2\ell}(t,x),
$
and the remaining $2(2N-\ell)$ solutions form complex conjugate pairs 
$ y_{2p-1}(t,x) = \overline{y_{2p}(t,x)}, p=\ell+1,\dots,2N,$ with $\text{Im } y_{2p-1}(t,x) >0$.
By $
ZD [u_0] (t, x) = \Pi ZD [u_0] (t, x) + \overline{ \Pi ZD [u_0] (t, x)} =\lambda(t, x)+ \overline{ \lambda(t, x)}$, we thus have
\begin{align*}
ZD [u_0] (t, x)
& =  \sum_{k=0}^{2N}u_0(y_{2k}(t, x)) + \sum_{k=0}^{2N}u_0( \overline{y_{2k}(t, x)})\\ 
& = 2 \sum_{k=0}^{\ell}u_0(y_{2k}(t, x)) +  \sum_{k=\ell+1}^{2N}u_0(y_{2k}(t, x))  +  \sum_{k=\ell+1}^{2N}u_0( \overline{y_{2k}(t, x)}) \\
& = 2 \sum_{k=0}^{\ell}u_0(y_{2k}(t, x)) + \sum_{ k =2 \ell+1}^{4N}u_0(y_{k}(t, x)) 
\\
& = \sum_{\substack{0 \le k \le 4N \\ y_k \in \mathbb{R}}} (-1)^k\, u_0(y_k),
\end{align*}
where we used Lemma \ref{lemma:fracPQn=2} (see Appendix) in the last line.
This finishes the proof of the formula \eqref{eq:ZDu_0geom} for the case when $u_0(y)$ is rational. 

We now extend the formula \eqref{eq:ZDu_0geom} for rational initial data to a general function 
$u_0 \in L^2(\mathbb{R}) \cap C^1(\mathbb{R})$ satisfying
$|u_0(y)| + |u_0'(y)| \to 0$, as $|y|\to\infty.$
Using a standard mollification argument, we construct a sequence 
$u_0^\delta \in H^s(\mathbb{R})$ for every $s\in\mathbb{R}$ such that $u_0^\delta \to u_0$ strongly in $L^2(\mathbb{R}),$
with a uniform bound in $L^\infty(\mathbb{R})$. Since rational functions are dense in 
$H^2(\mathbb{R})$, we may assume that each $u_0^\delta$ is rational.

Let us fix $t\in\mathbb{R}$ and define
$
g_t(y)=y-3t u_0^2(y)$. Denote by $K_t(u_0)$ the set of critical values of $g_t$. 
Since $u_0(y)\to0$ and $u_0'(y)\to0$ as $|y|\to\infty$, we have
$g_t'(y) = 1-6t u_0(y)u_0'(y)\to1$, so $K_t(u_0)$ is compact. 
By Sard's theorem,
$K_t(u_0)$ has Lebesgue measure zero.
Let $\Omega$ be any connected component of $K_t(u_0)^c$. For every $x_0\in \Omega$, the equation
$
y-3t u_0^2(y)=x_0$
has only simple roots. Consequently, there exists an open subinterval $\omega$ of $\Omega$, 
containing $x_0$ such that, for every $x\in\omega$, the equation
admits the same finite number $2\ell+1$ of real solutions
\[
y_0(t,x)<\cdots<y_{2\ell}(t,x).
\]
Since $u_0^\delta \to u_0$ in $C^1_{\mathrm{loc}}(\mathbb{R})$, we have 
$g_t^\delta(y)=y-3t (u_0^\delta(y) )^2 \to g_t$ in $C^1_{\mathrm{loc}}$, and the simple roots
of $g_t(y)=x$ are stable under such perturbations. Hence, for $\delta$ sufficiently small, we have $\omega \cap K_t(u_0^\delta) = \emptyset$, and every $x\in\omega$, the equation
$
y-3t (u_0^\delta(y))^2=x $
has exactly $2l +1$ simple solutions
$$
y_0^\delta(t,x)<\cdots<y_{2 l}^\delta(t,x),
$$
which satisfy
$y_k^\delta(t,x)\to y_k(t,x)$ as $\delta\to0$ for every $k=0,\dots,2 l$.

We have already proved that, for $x\in\omega$,
\[
ZD[u_0^\delta](t,x)
=
\sum_{k=0}^{2\ell} (-1)^k
u_0^\delta\!\left(y_k^\delta(t,x)\right).
\]
Using the strong $L^2$ convergence of $u_0^\delta$, the uniform $L^\infty$
bound, we pass to the weak limit in
$L^2(\omega)$. Therefore, for almost
every $x\in\omega$,
$$
ZD[u_0](t,x)
=
\sum_{k=0}^{2\ell} (-1)^k
u_0\!\left(y_k(t,x)\right).
$$
Since $\omega$ is an arbitrary relatively compact subinterval of $\Omega$, the above identity holds for almost
every $x\in K_t(u_0)^c$. This completes the whole proof.

\end{proof}

\section{Zero dispersion limit for the BO hierarchy}\label{seczeroBon}
In this section, we analyze the zero-dispersion limit for the $(n+1)$-th order flow of the BO hierarchy. We consider the equation with a small dispersion parameter $\varepsilon > 0$
\begin{align}\label{BOhiereps}
	\partial_t u^\varepsilon = \partial_x (\nabla E_n^\varepsilon (u^\varepsilon)),  \qquad
	u^\varepsilon(0, x) = u_0(x) \in L^2_r (\mathbb{R}),
\end{align}
where the Hamiltonian is defined as
$$E_n^\varepsilon (u) := \langle (\varepsilon D - T_{u})^n \Pi u, \Pi u \rangle_{L^2}.$$
Following the approach used in Section \ref{seczeroBo3}, we introduce the rescaling transformation:
\begin{equation*}
	u^\varepsilon(t, x) = \varepsilon\, v(s, x), \quad \text{with} \quad s = \varepsilon^n t.
\end{equation*}
Under this change of variables, the function $v(s, x)$ satisfies the $(n+1)$-th order flow,
\begin{equation}\label{eq:BOhier_v}
	\partial_s v = \partial_x (\nabla E_n (v)), \quad
	v(0, x) = v_0(x) := \varepsilon^{-1} u_{0}(x),
\end{equation}
where $E_n(v) = \langle (D - T_v)^n \Pi v, \Pi v \rangle_{L^2}$.
From Section \ref{secproof}, we know that, if $v_0 \in H^{s}_r (\mathbb{R})$, $s > n + \frac{1}{2}$, the solution of the initial value problem \eqref{eq:BOhier_v} is given by 
$$
v(s,x) = \Pi v(s,x) + \overline{\Pi v(s,x)}, 
\qquad x \in \mathbb{R},
$$
where for every complex number $z$ with $\mathrm{Im}(z)>0$,
\begin{align}\label{formula_BO3}
	\Pi v(s, z) 
	&= \frac{1}{2i \pi} \, I_+ \left[ \left( X^* - (n+1)s L_{v_0}^n - z \mathrm{Id} \right)^{-1} \Pi v_0 \right].	
\end{align}
We know from Remark \ref{remark:well-def} that, the resolvent on the right hand side is well defined.
Recall $L_{v_0} = D - T_{v_0}$, using a binomial expansion, we can expand $L_{v_0}^n$ as 
\begin{equation*}
 L_{v_0}^n = 	( D - T_{u_0})^n = (-1)^n T_{u_0}^n +  \sum_{j=1}^n R_j,
\end{equation*}
where each $R_j$ is a combination of operators $\{D^k\}_{k=1}^{n}$ and $\{T_{u_0}^m\}_{m=1}^{n-1}$, such that the total differential order of each term is at most $j$.

Moreover, scaling the solution back via $\varepsilon \Pi v(s,x) = \Pi u^\varepsilon(t,x)$ and $\varepsilon \Pi v_0 = \Pi u_0$, we can write the solution to the original initial value problem \eqref{BOhiereps} as
$$
u^\varepsilon(t,x) = \Pi u^\varepsilon(t,x) + \overline{\Pi u^\varepsilon(t,x)}, 
\qquad x \in \mathbb{R},$$
where  for every complex number $z$ with $\mathrm{Im}(z)>0$,
\begin{align*}	
	\Pi u^\varepsilon(t, z)  
	& = \frac{1}{2i \pi} \, I_+ \left[ \left( X^* - (n+1)t (\varepsilon D-T_{u_0})^n - z \mathrm{Id} \right)^{-1} \Pi u_0 \right].
\end{align*}
By scaling and Remark \ref{remark:well-def}, we have
$$e^{-i \frac{t}{\varepsilon} (\varepsilon D - T_{u_0} )^{n+1}}  U^*(t) X^* U(t)  e^{i \frac{t}{\varepsilon} (\varepsilon D - T_{u_0} )^{n+1}}
= X^* - (n+1) t (\varepsilon D - T_{u_0})^n.$$
which is maximally dissipative and guarantees that the resolvent on the right hand side of $\Pi u^\varepsilon(t, z)$ is well defined. 

The following Lemma is a general version of Lemma  \ref{lemma:perturbedbo3}.
\begin{lemma}\label{lemma:perturbedbo_n}
Let $z \in \mathbb{C}$ with $\mathrm{Im}(z) > 0$, and assume $u_0 \in H^{s}_r (\mathbb{R})$, $s > n + 1/2$. Let $h_\varepsilon, h \in L^2(\mathbb{R_+})$ be the solutions to the two equations below, respectively, 
	\begin{align}
		& \left( X^* - (n+1)t (\varepsilon D-T_{u_0})^n  - z \mathrm{Id} \right) h_\varepsilon =\Pi u_0, \label{eq:pertubedequation}\\
		& \left(X^* - (-1)^{n}(n+1) t\,T_{u_0}^n - z\,\mathrm{Id}\right)h =  \Pi u_0\label{eq:nonpertubedequation}.
	\end{align}
Then, as $\varepsilon \to 0$, we have
	\begin{enumerate}
		\item  $h_\varepsilon$ is uniformly bounded in $L^2(\mathbb{R}_+)$;
		\item $\hat{h}_\varepsilon \rightharpoonup \hat{h}$ weakly in $L^2(\mathbb{R}_+)$;
		\item  $\hat{h}_\varepsilon(0) \to \hat{h}(0)$.
	\end{enumerate}
\end{lemma}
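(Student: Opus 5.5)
The plan is to follow the proof of Lemma \ref{lemma:perturbedbo3} line by line. The only new ingredient is a description of the error operator
$$
\mathcal{R}_\varepsilon := (\varepsilon D - T_{u_0})^n - (-T_{u_0})^n .
$$
We expand the product $(\varepsilon D - T_{u_0})^n$ into words in $\varepsilon D$ and $-T_{u_0}$. Every word other than $(-T_{u_0})^n$ contains at least one factor $\varepsilon D$, and so carries a power $\varepsilon^k$ with $1\le k\le n$. In each such word we commute all the $D$'s to the left using $[D,T_b]=T_{Db}$. This writes $\mathcal{R}_\varepsilon$ as a finite sum
$$
\mathcal{R}_\varepsilon=\sum_{k=1}^n \varepsilon^k \sum_{m\le k} D^m\, \mathcal{T}_{k,m},
$$
where each $\mathcal{T}_{k,m}$ is a finite linear combination of products of Toeplitz operators with symbols $D^j u_0$, $j\le n-1$. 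Since $u_0\in H^s$ with $s>n+\tfrac12$, all these symbols lie in $L^\infty$, so every $\mathcal{T}_{k,m}$ is bounded on $L^2_+$ with norm independent of $\varepsilon$.

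\textbf{Point (1).} By Remark \ref{remark:well-def} and the rescaled identity written just before the lemma, $A_\varepsilon:=X^*-(n+1)t(\varepsilon D-T_{u_0})^n$ is unitarily conjugate to $X^*$. Hence $\|(A_\varepsilon-z)^{-1}\|\le 1/\mathrm{Im} z$, which gives $\|h_\varepsilon\|_{L^2}\le \|\Pi u_0\|_{L^2}/\mathrm{Im} z$ uniformly in $\varepsilon$.

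\textbf{Point (2).} In Fourier variables, \eqref{eq:pertubedequation} reads
$$
\ii\partial_\xi \hat h_\varepsilon - z\hat h_\varepsilon-(-1)^n(n+1)t\,\widehat{T_{u_0}^n h_\varepsilon}-\widehat{\Pi u_0} =(n+1)t\sum_{k,m}\varepsilon^k \xi^m\,\widehat{\mathcal{T}_{k,m}h_\varepsilon}.
$$
We test this against $\phi\in C_c^\infty((0,\infty))$. Since $\xi^m\phi\in L^\infty$ and $\|\mathcal{T}_{k,m}h_\varepsilon\|_{L^2}\le C$, the right-hand side is $O(\varepsilon)$. Take any weak limit point $\hat g$ of $\hat h_\varepsilon$. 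Because $T_{u_0}^n$ is bounded, it is weakly continuous, so $\hat g$ is a distributional solution on $(0,\infty)$ of the equation \eqref{eq:nonpertubedequation}.

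To conclude $g=h$ we need uniqueness of $L^2_+$ solutions of \eqref{eq:nonpertubedequation}. A distributional solution $\hat g\in L^2(0,\infty)$ has $\partial_\xi\hat g\in L^2$, so $g\in \mathrm{Dom}(X^*)$ and $g$ is a genuine solution. Uniqueness then follows from the maximal dissipativity of $-\ii(X^*-(-1)^n(n+1)tT_{u_0}^n)$, which holds because this is a bounded self-adjoint perturbation of $-\ii X^*$. Since every limit point equals $\hat h$, the whole family converges weakly to $\hat h$.

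\textbf{Point (3).} Take $\chi\in C_c^\infty([0,\infty))$ with $\chi(0)=1$. Write
$$
\hat h_\varepsilon(0)=-\int_0^\infty(\partial_\xi\hat h_\varepsilon)\chi\,d\xi-\int_0^\infty\hat h_\varepsilon\chi'\,d\xi ,
$$
and substitute $\partial_\xi\hat h_\varepsilon$ from the Fourier equation above. The error term is $\varepsilon\sum \int \xi^m\chi\,\widehat{\mathcal{T}_{k,m}h_\varepsilon}$, which is $O(\varepsilon)$ because $\xi^m\chi$ is bounded and compactly supported. The remaining terms converge by the weak convergence from point (2). Undoing the integration by parts with the limit equation then gives $\hat h_\varepsilon(0)\to\hat h(0)$.

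There is a justification issue here: $h_\varepsilon$ lies in the domain of $A_\varepsilon$, but it is not a priori clear that it lies in $H^n$. The Fourier equation should therefore be read in the distributional sense on $(0,\infty)$. Expressing $\partial_\xi\hat h_\varepsilon$ through the equation is still legitimate near $\xi=0$, because all terms there are locally integrable, and $\hat h_\varepsilon(0^+)=I_+(h_\varepsilon)$ is well defined by Theorem \ref{mainBOhier}.

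\textbf{Main obstacle.} The hardest step is the bookkeeping that puts $\mathcal{R}_\varepsilon$ in the form $\sum\varepsilon^k D^m\mathcal{T}_{k,m}$ with $m\le k$ and bounded $\mathcal{T}_{k,m}$, since this is exactly what lets every $D$ be absorbed into a polynomial weight on the compactly supported test function. The key check is that commuting the $D$'s to the left never differentiates $u_0$ more than $n-1$ times. I would verify this by induction on $n$, using
$$
(\varepsilon D-T_{u_0})\,D^m\mathcal{T}=\varepsilon D^{m+1}\mathcal{T}-D^m T_{u_0}\mathcal{T}+\sum_{i=1}^{m}\binom{m}{i} D^{m-i}T_{D^iu_0}\mathcal{T}.
$$
Each commutator term loses a power of $D$ and gains one derivative on $u_0$. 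Under the invariant $m\le k$ that derivative count stays at most $n-1$.
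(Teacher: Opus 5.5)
Your proposal is correct and follows the paper's proof essentially step by step. The uniform bound comes from maximal dissipativity of the conjugated operator; testing the Fourier-side equation against compactly supported functions makes the $\varepsilon$-terms $O(\varepsilon)$; the weak limit is identified through the limiting equation; and a cutoff integration by parts recovers $\hat h_\varepsilon(0)\to\hat h(0)$. Your normal ordering of the $D$'s to the left, which the paper only gestures at with its example $T_{u_0}D^kT_{u_0}$, and your uniqueness argument for the limit equation make explicit what the paper leaves implicit. One small slip: the commutator coefficients should read $(-1)^{i+1}\binom{m}{i}$, which is harmless since only the boundedness of the $\mathcal{T}_{k,m}$ matters.
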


\begin{proof}
The maximal dissipativity of the operator $X^* - (n+1)t (\varepsilon D - T_{u_0})^n$ allows us to obtain the uniform $L^2$ bound for $h_\varepsilon$. Thus, there exists a subsequence such that $\hat{h}_\varepsilon \rightharpoonup \hat{g}$ weakly in $L^2(\mathbb{R}_+)$. To identify the limit, we examine the equation in Fourier space. 
\begin{equation}\label{eq:pertueqn}
\partial_\xi \hat{h}_\varepsilon = -i ( \widehat{\Pi u_0} + z \hat{h}_\varepsilon + (n+1)t
\mathcal{F}[(\varepsilon D - T_{u_0})^n h_\varepsilon] ).
\end{equation}
For any test function $\phi \in C_c^\infty((0, \infty))$, we have:
\begin{align}\label{eq:fouriern}
\int_0^\infty \hat{h}_\varepsilon (-i \partial_\xi \phi - z \phi) d\xi - (n+1)t \int_0^\infty \mathcal{F}[(\varepsilon D - T_{u_0})^n h_\varepsilon] \phi \, d\xi = \int_0^\infty \widehat{\Pi u_0} \phi \, d\xi.
\end{align}
The operator $(\varepsilon D - T_{u_0})^n$ can be expanded as,
\begin{equation}\label{eq:expansion}
	(n+1)t (\varepsilon D - T_{u_0})^n = (-1)^n (n+1)t T_{u_0}^n + (n+1)t \varepsilon \mathcal{R}_\varepsilon,
\end{equation}
where $\mathcal{R}_\varepsilon = \sum_{j=1}^n \varepsilon^{j-1} R_j$ and each $R_j$ is a combination of operators $\{D^k\}_{k=1}^{n}$ and $\{T_{u_0}^m\}_{m=1}^{n-1}$, such that the total differential order of each term is at most $j$.
Using this expansion, we split the second integral in the left hand-side of \eqref{eq:fouriern},
$$ (n+1)t \int_0^\infty \mathcal{F}[(\varepsilon D - T_{u_0})^n h_\varepsilon] \phi \, d\xi = (-1)^n(n+1)t \int_0^\infty \widehat{T_{u_0}^n h_\varepsilon} \phi \, d\xi + (n+1)t \varepsilon \int_0^\infty \widehat{\mathcal{R}_\varepsilon h_\varepsilon} \phi \, d\xi. $$
For any combination of operators in $R_j$ (e.g., $T_{u_0} D^k T_{u_0}$), because the test function $\phi$ has compact support, any term like $\xi^k \phi(\xi)$ in the integral $\int \mathcal{F}[R_j h_\varepsilon] \phi \, d\xi$, is a bounded function with finite $L^2$ norm. 
Since the Toeplitz operators $T_{u_0}$ are bounded on $L^2$ and $h_\varepsilon$ is uniformly bounded, the entire integral $\int \widehat{R_j h_\varepsilon} \phi \, d\xi$ is bounded by a constant independent of $\varepsilon$. Therefore,
$$ \left| \varepsilon \int_0^\infty \widehat{\mathcal{R}_\varepsilon h_\varepsilon} \phi \, d\xi \right| \leq \varepsilon \sum_{j=1}^n \varepsilon^{j-1} C_j \xrightarrow{\varepsilon \to 0} 0. $$
Passing to the limit in \eqref{eq:fouriern}, we find that $\hat{g}$ satisfies equation \eqref{eq:nonpertubedequation} in the sense of distributions, implying $g=h$.
	
For the third point, let $\chi \in C_c^\infty([0, \infty))$ with $\chi(0)=1$. Expressing $\hat{h}_\varepsilon(0)$ via integration by parts and substituting $\partial_\xi \hat{h}_\varepsilon$ from equation \eqref{eq:pertueqn},
	$$ \hat{h}_\varepsilon(0) = i \int_0^\infty \left( \widehat{\Pi u_0} + z \hat{h}_\varepsilon + (n+1)t \mathcal{F}[(\varepsilon D - T_{u_0})^n h_\varepsilon] \right) \chi \, d\xi - \int_0^\infty \hat{h}_\varepsilon \chi' \, d\xi. $$
Applying the expansion \eqref{eq:expansion} again, we isolate the $\varepsilon$-dependent terms as follows:
\begin{align*}
	\hat{h}_\varepsilon(0) &= i \int_0^\infty \left( \widehat{\Pi u_0} + z \hat{h}_\varepsilon + (-1)^n (n+1)t \widehat{T_{u_0}^n h_\varepsilon} \right) \chi \, d\xi - \int_0^\infty \hat{h}_\varepsilon \chi' \, d\xi + I_\varepsilon,
\end{align*}
where the error term is given by
$ I_\varepsilon = i(n+1)t \varepsilon \int_0^\infty \widehat{\mathcal{R}_\varepsilon h_\varepsilon}(\xi) \chi(\xi) \, d\xi. $
Using the same argument above, $I_\varepsilon$ vanishes as $\varepsilon \to 0$. Taking the limit $\varepsilon \to 0$, the weak convergence $\hat{h}_\varepsilon \rightharpoonup \hat{h}$ allows us to pass to the limit in the integrals,
	$$ \lim_{\varepsilon \to 0} \hat{h}_\varepsilon(0) = - \int_0^\infty (\partial_\xi \hat{h}) \chi \, d\xi - \int_0^\infty \hat{h} \chi' \, d\xi = - \int_0^\infty \partial_\xi(\hat{h}\chi) \, d\xi = \hat{h}(0). $$
This concludes the proof for the general case.
\end{proof}

As in the third-order case, we first state the weak convergence result and the resolvent formula for the general hierarchy. 

\begin{proposition}\label{prop:BOhier-weak}
		Let $n \in \N$ and $u_0 \in H^s_r(\mathbb{R})$ with $s > n + \frac{1}{2}$. For every $t \in \mathbb{R}$, the solution $u^\varepsilon(t)$ of \eqref{eq:BOneps} converges weakly in $L^2(\mathbb{R})$ to a function $ZD[u_0](t, \cdot)$, characterized by
		$$
		\forall x \in \mathbb{R}, \quad ZD[u_0](t, x) = \Pi ZD[u_0](t, x) + \overline{\Pi ZD[u_0](t, x)},
		$$
		where, for every complex number $z$ with $\textrm{Im} \,z >0$, 
		\begin{align}\label{eq:BOnTn}
			\Pi ZD[u_0](t, z) = \frac{1}{2 \ii \pi }\, I_+ \Bigl[(X^* - (-1)^n	(n+1)t\,T_{u_0}^n - z\,\mathrm{Id})^{-1} (\Pi u_0)\Bigr].
		\end{align}
		Moreover, the mapping
		$$	u_0 \in L^\infty(\mathbb{R}) \cap L^2(\mathbb{R}) \longmapsto ZD[u_0]$$
		has the following properties:
		\begin{enumerate}
			\item For every $t \in \mathbb{R}$,
			$
			\|ZD[u_0](t)\|_{L^2} \le \|u_0\|_{L^2},$
			and $ZD[u_0]$ is continuous on $\mathbb{R}$ with values in $L^2(\mathbb{R})$ endowed with the weak topology.
			\item If $u_0^\delta$ converges strongly to $u_0$ in $L^2(\mathbb{R})$ as $\delta \to 0$, with a uniform bound on $\|u_0^\delta\|_{L^\infty}$, then, for every $t \in \mathbb{R}$, $ZD[u_0^\delta](t)$ converges to $ZD[u_0](t)$ weakly in $L^2(\mathbb{R})$.
		\end{enumerate}	
\end{proposition}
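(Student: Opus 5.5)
The proof follows the scheme of Proposition \ref{prop:BO3-weak}, with Lemma \ref{lemma:perturbedbo_n} taking the place of Lemma \ref{lemma:perturbedbo3}.

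\emph{Weak compactness and identification of the limit.} Fix $t\in\R$. Conservation of $E_0$ gives $\|u^\varepsilon(t)\|_{L^2}=\|u_0\|_{L^2}$, so every sequence $\varepsilon_j\to0$ has a subsequence along which $u^{\varepsilon_j}(t)$ converges weakly in $L^2$ to some real-valued $w$. For $z\in\C_+$ the map $f\mapsto \Pi f(z)$ is a continuous linear functional on $L^2$: it is the pairing with the Cauchy kernel $\frac{1}{2\ii\pi}\frac{1}{\cdot-z}\in L^2$. Hence $\Pi u^{\varepsilon_j}(t,z)\to \Pi w(z)$. On the other hand, the explicit formula obtained after rescaling gives
\[
\Pi u^\varepsilon(t,z)=\frac{1}{2\ii\pi}I_+(h_\varepsilon)=\frac{1}{2\ii\pi}\hat h_\varepsilon(0),
\]
where $h_\varepsilon$ solves \eqref{eq:pertubedequation}. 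By point (3) of Lemma \ref{lemma:perturbedbo_n}, this converges to $\frac{1}{2\ii\pi}\hat h(0)$, with $h$ the solution of \eqref{eq:nonpertubedequation}. So $\Pi w$ is given by \eqref{eq:BOnTn}, independently of the subsequence. Since $w$ is real, $w=\Pi w+\overline{\Pi w}$, so $w$ is determined. Every subsequence therefore has the same weak limit, and the whole family $u^\varepsilon(t)$ converges weakly to $ZD[u_0](t)$. Well-posedness of the resolvent in \eqref{eq:BOnTn} follows because $(-1)^n(n+1)tT_{u_0}^n$ is bounded and self-adjoint, so $-\ii(X^*-(-1)^n(n+1)tT_{u_0}^n)$ is maximally dissipative. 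This is a bounded perturbation of $-\ii X^*$, exactly as in the Remark after Proposition \ref{prop:BO3-weak}.

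\emph{Property (1).} The bound $\|ZD[u_0](t)\|_{L^2}\le\|u_0\|_{L^2}$ follows from weak lower semicontinuity of the norm. For $u_0\in L^2\cap L^\infty$ only, I take \eqref{eq:BOnTn} as the definition of $ZD[u_0]$. The $L^2$ bound then comes from writing $\Pi ZD[u_0](t)=\mathcal U(t)\Pi u_0$, where $\mathcal U(t)$ is the unitary group generated by the bounded self-adjoint operator $(-1)^n(n+1)T_{u_0}^n$. The key ingredient is the identity $[X^*,T_b]=\frac{\ii}{2\pi}I_+(\cdot)\Pi b$, which reproduces the computation of Lemma \ref{L2boundexplicit}.

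A simpler route is to approximate $u_0$ by smooth data, $u_0^\delta\to u_0$ in $L^2$ with a uniform $L^\infty$ bound. Each $u_0^\delta$ satisfies the bound, and the bound passes to the limit via property (2).

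For weak continuity in $t$, note that the resolvent in \eqref{eq:BOnTn} depends norm-continuously on $t$, since the perturbation is bounded. Hence $\Pi ZD[u_0](t,z)$ is continuous in $t$ for each fixed $z$. Together with the uniform $L^2$ bound and the density of the Cauchy kernels, this gives weak continuity.

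\emph{Property (2).} Since $\|u_0^\delta\|_{L^\infty}$ is bounded and $u_0^\delta\to u_0$ in $L^2$, we get $T_{u_0^\delta}\to T_{u_0}$ strongly. A telescoping decomposition of $T_{u_0^\delta}^n-T_{u_0}^n$ into $n$ terms, each containing one factor $T_{u_0^\delta}-T_{u_0}$ between uniformly bounded factors, gives strong convergence $T^n_{u_0^\delta}\to T^n_{u_0}$.

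Strong convergence of the resolvents then follows from the second resolvent identity. For $A_\delta=X^*-(-1)^n(n+1)tT^n_{u_0^\delta}$,
\[
(A_\delta-z)^{-1}-(A-z)^{-1}=(A_\delta-z)^{-1}\,(-1)^n(n+1)t\,(T^n_{u_0^\delta}-T^n_{u_0})\,(A-z)^{-1},
\]
and $(A_\delta-z)^{-1}$ has norm at most $1/\mathrm{Im} z$, uniformly in $\delta$.

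\emph{Main obstacle.} The delicate point is evaluating $I_+$, that is $\hat h(0)$, along the limit. This is not a weakly continuous functional in general, so I handle it as in the proof of Lemma \ref{lemma:perturbedbo_n}. Using the equation in Fourier space, I write $\hat h_\delta(0)$ as integrals against a cutoff $\chi$ with $\chi(0)=1$. Weak convergence of $\hat h_\delta$ then suffices to pass to the limit. Combined with the uniform $L^2$ bound and the density of the Cauchy kernels, this yields $ZD[u_0^\delta](t)\rightharpoonup ZD[u_0](t)$.
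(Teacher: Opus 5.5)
Your proposal is correct and follows the paper's route. You identify $\Pi$ of the weak limit via Lemma~\ref{lemma:perturbedbo_n}(3), then use the telescoping decomposition of $T_{u_0^\delta}^n-T_{u_0}^n$ and strong resolvent convergence for property (2). You are more careful than the paper on two points. First, on uniqueness of the subsequential limit, which you handle through the Cauchy-kernel pairing. Second, on the fact that $I_+$ is not continuous on $L^2_+$, so strong resolvent convergence alone does not give convergence of $I_+$ of the resolvent. The same issue arises in your passage from norm-continuity of the resolvent in $t$ to continuity of $\Pi ZD[u_0](t,z)$, which needs a word. A clean way to settle both: set $B:=(-1)^n(n+1)T_{u_0}^n$ and $g:=(X^*-tB-z)^{-1}f$. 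Then $(X^*-z)g=f+tBg$, and since $\frac{1}{2\ii\pi}I_+[(X^*-z)^{-1}h]=h(z)$,
\[
\frac{1}{2\ii\pi}I_+(g)=f(z)+t\,(Bg)(z).
\]
This is evaluation at $z$ of an $L^2_+$ function, hence continuous under strong convergence.

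You should discard your first route to the bound in (1). The map $f\mapsto\frac{1}{2\ii\pi}I_+[(X^*-tB-z)^{-1}f]$ is not $e^{\pm\ii tB}$. By the identity above, its $t$-derivative at $t=0$ is $\bigl(B(X^*-z)^{-1}f\bigr)(z)=\bigl(B\tfrac{f-f(z)}{\cdot-z}\bigr)(z)$, not $\pm\ii(Bf)(z)$. For instance, for $f=\frac{1}{y+\ii}$ and $z=\ii$ it equals $\frac{\ii}{2}(Bf)(\ii)$. More basically, this map is in general only a contraction, since the weak limit typically loses $L^2$ norm after the breaking time. So the computation of Lemma~\ref{L2boundexplicit}, which rests on the Lax-pair conjugation \eqref{eq:sanwich_n}, has no counterpart here.

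Your approximation route is the right one, provided it is ordered without circularity:
\begin{enumerate}
\item First prove (2) for $H^s$ approximants $u_0^\delta$ of a given $u_0\in L^2\cap L^\infty$. Here the uniform bound $\|ZD[u_0^\delta](t)\|_{L^2}\le\|u_0^\delta\|_{L^2}$ comes from weak lower semicontinuity.
\item Any weak limit point $w$ then satisfies $\Pi w(z)=\Pi ZD[u_0](t,z)$. This shows at once that \eqref{eq:BOnTn} defines an $L^2$ function and gives (1).
\item Only then prove (2) for general sequences.
\end{enumerate}
This fills in a step that the paper dismisses as following directly from the formula.
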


\begin{proof}[Proof of Propostion \ref{prop:BOhier-weak}]
The proof follows the same strategy as the third-order case, substituting the operator $(X^* - 3t T_{u_0}^2 - z\text{Id})$ with the $n$-th order version. By the definition of $I_+$, and applying the limit result in Lemma \ref{lemma:perturbedbo_n}, the solution $\Pi u^\varepsilon$ converges as $\varepsilon \to 0$ to
\begin{align}\label{eq:Pi-ZD-n}
	\Pi ZD[u_0](t, z) = \frac{1}{2 \ii \pi }\, I_+ \Bigl[\left(X^* - (-1)^{n}(n+1) t\,T_{u_0}^n - z\,\mathrm{Id}\right)^{-1} \Pi u_0 \Bigr].
\end{align}
By the conservation law and weak compactness, the solution $u^{\varepsilon}(t)$ converges weakly in $L^2(\mathbb{R})$ to a function $ZD[u_0](t, x)$, characterized by
$$ZD[u_0](t, x) = \Pi ZD[u_0](t, x) + \overline{\Pi ZD[u_0](t, x)}.$$
where $	\Pi ZD[u_0](t, z)$ is given in \eqref{eq:Pi-ZD-n}.

The two additional properties of the map $u_0 \mapsto ZD[u_0]$ are proved by adapting the third-order argument to the $n$-th order operator. 
We utilize the telescoping sum identity
\begin{align*}
	T_{u_0^\delta}^n - T_{u_0}^n = \sum_{j=0}^{n-1} T_{u_0^\delta}^{n-1-j} (T_{u_0^\delta} - T_{u_0}) T_{u_0}^j.
\end{align*}
By the uniform boundedness $\|T_{u_0^\delta}\| \leq \|u_0^\delta\|_{L^\infty}$ and $T_{u_0^\delta} \xrightarrow{s} T_{u_0}$, we obtain
$T_{u_0^\delta}^n \xrightarrow[s]{} T_{u_0}^n,$
which implies
$$ (X^* - (-1)^n(n+1)t\,T_{u_0^\delta}^n- z\mathrm{Id})^{-1} \xrightarrow[s]{\delta \to 0} (X^* - (-1)^n(n+1)t\,T_{u_0}^n- z\mathrm{Id})^{-1}. $$
The strong convergence of the resolvents and the formula \eqref{eq:Pi-ZD-n} allow us to conclude that, for every $t \in \mathbb{R}$, $ZD[u_0^\delta](t)$ converges weakly to $ZD[u_0](t)$ in $L^2(\mathbb{R}).$
\end{proof}

To identify the formula \eqref{eq:BOnTn} in Proposition \ref{prop:BOhier-weak} with the geometric expression in Theorem \ref{thm:zerolimit}, we need an explicit description of the operator $T_{u_0}^n$ in the rational case. The next lemma is the natural generalization of Lemma \ref{lemma:Tu0Tu02}.

\begin{lemma}\label{lemma:Tun-general}
Let $u_0(y)$ be as in Lemma \ref{lemma:Tu0Tu02}. 
For $n \geq 1$ and $f \in L^2_+(\mathbb{R})$, we have the following inductive formula:
\begin{equation}\label{eq:Tun-general}
T_{u_0}^n f(y) = u_0(y)^n f(y) - \sum_{k=0}^{n-1} u_0(y)^{n-1-k} \sum_{j=1}^N \frac{c_j (T_{u_0}^k f)(p_j)}{y - p_j},
\end{equation}
where $N$ is the number of poles of the function $u_0$.
\end{lemma}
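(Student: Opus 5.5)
The natural route is induction on $n$, using only the first identity of Lemma~\ref{lemma:Tu0Tu02}:
\[
T_{u_0} g(y) = u_0(y) g(y) - \sum_{j=1}^N \frac{c_j\, g(p_j)}{y-p_j}, \qquad g \in L^2_+(\R).
\]
This identity holds for every $g\in L^2_+(\R)$, since $g$ extends holomorphically to $\C_+$ and $p_j\in\C_+$, so $g(p_j)$ is well defined. For $n=1$, formula \eqref{eq:Tun-general} is exactly this identity: the sum over $k$ reduces to $k=0$, with factor $u_0^0=1$.

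For the induction step, assume \eqref{eq:Tun-general} holds at order $n$. Set $g:=T_{u_0}^n f\in L^2_+(\R)$, which lies in $L^2_+$ because $T_{u_0}$ is bounded on $L^2_+$. Apply the first identity to $g$:
\[
T_{u_0}^{n+1} f(y) = u_0(y)\,(T_{u_0}^n f)(y) - \sum_{j=1}^N \frac{c_j (T_{u_0}^n f)(p_j)}{y-p_j}.
\]
Now substitute the induction hypothesis into the first term and multiply through by $u_0(y)$. This produces $u_0(y)^{n+1} f(y)$ together with the terms
\[
-\sum_{k=0}^{n-1} u_0(y)^{n-k}\sum_{j} \frac{c_j (T_{u_0}^k f)(p_j)}{y-p_j}.
\]
The second term supplies precisely the missing index $k=n$, whose factor is $u_0^{0}$. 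Combining gives the sum over $k=0,\dots,n$ with exponent $(n+1)-1-k$, which is the formula at order $n+1$.

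The one point needing care is that in the substituted expression the values $(T_{u_0}^k f)(p_j)$ stay as unknown scalars; we never evaluate them explicitly. This differs from Lemma~\ref{lemma:Tu0Tu02}(2), which expanded $T_{u_0}f(p_j)$ via \eqref{eq:Tu_0fpj}. For consistency one may remark that at $n=2$, inserting \eqref{eq:Tu_0fpj} recovers part (2) of that lemma. There is no genuine obstacle here; the only subtlety is to justify pointwise evaluation at $p_j$, which follows from the Hardy space identification recalled in Subsection~\ref{subsection:notation}.
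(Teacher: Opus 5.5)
Your proposal is correct and is essentially the paper's proof: induction on $n$, applying the first identity of Lemma~\ref{lemma:Tu0Tu02} to $T_{u_0}^n f$, substituting the induction hypothesis into the $u_0(y)(T_{u_0}^n f)(y)$ term, and absorbing the new boundary term as the $k=n$ summand. Your remark that $(T_{u_0}^k f)(p_j)$ is well defined through the holomorphic extension of $T_{u_0}^k f \in L^2_+(\R)$ to $\C_+$ makes explicit a point the paper leaves implicit.
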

\begin{proof}
We proceed by induction on $n \geq 1$. The first identity in Lemma \ref{lemma:Tu0Tu02}
gives the base case when $n = 1$: 
\begin{equation}\label{eq:base_rule}
T_{u_0} f(y) = u_0(y) f(y) - \sum_{j=1}^N \frac{c_j f(p_j)}{y - p_j}.
\end{equation}
Assume the formula holds for some $n \ge 1$. We compute $ T_{u_0}^{n+1} f(y) = T_{u_0} (T_{u_0}^n f)(y). $
Applying the formula \eqref{eq:base_rule}, we have 
\begin{equation}\label{eq:step1}
	T_{u_0}^{n+1} f(y) = u_0(y) (T_{u_0}^n f)(y) - \sum_{j=1}^N \frac{c_j (T_{u_0}^n f)(p_j)}{y - p_j}.
\end{equation}
Now, we substitute the induction hypothesis for $(T_{u_0}^n f)(y)$ into the first term of \eqref{eq:step1}
\begin{align*}
T_{u_0}^{n+1} f(y) 
= & u_0(y) \Big( u_0(y)^n f(y) - \sum_{k=0}^{n-1} u_0(y)^{n-1-k}  \sum_{j=1}^N \frac{c_j (T_{u_0}^k f)(p_j)}{y - p_j} \Big)
 -  \sum_{j=1}^N \frac{c_j (T_{u_0}^n f)(p_j)}{y - p_j}\\
= & u_0(y)^{n+1} f(y) - \sum_{k=0}^{n-1} u_0(y)^{n-k} \sum_{j=1}^N \frac{c_j (T_{u_0}^k f)(p_j)}{y - p_j} - \sum_{j=1}^N \frac{c_j (T_{u_0}^n f)(p_j)}{y - p_j}. 
\end{align*}
We absorb the last term on the right-hand side into the second term,
$$ T_{u_0}^{n+1} f(y) = u_0(y)^{n+1} f(y) - \sum_{k=0}^{n} u_0(y)^{n-k} \sum_{j=1}^N \frac{c_j (T_{u_0}^k f)(p_j)}{y - p_j} . $$
This is exactly the formula \eqref{eq:Tun-general} for $n+1$, which completes the induction.
\end{proof}

\begin{remark}\label{rqreccurence}
For each $k \ge 1$ and each pole $p_j$, the quantity $(T_{u_0}^k f)(p_j)$ in the formula \eqref{eq:Tun-general} is well defined. Indeed, set $g(y) := T_{u_0}^{k-1} f(y)$, then the formula \eqref{eq:Tu_0fpj} in the proof of Lemma \ref{lemma:Tu0Tu02} yields
\begin{equation*}
T_{u_0}^k f (p_j) = c_j (T_{u_0}^{k-1} f)'(p_j) + \sum_{l \neq j} \frac{c_l (T_{u_0}^{k-1} f(p_j)  - T_{u_0}^{k-1} f(p_l) )}{p_j - p_l} + \sum_{l=1}^N \frac{\overline{c_l} T_{u_0}^{k-1} f(p_j) }{p_j - \overline{p_l}}.
\end{equation*}
We see that $T_{u_0}^k f (p_j)$ depends on the derivatives of $f$ at the poles up to order $k$.
\end{remark}

\begin{remark}
When $n=2$, formula \eqref{eq:Tun-general} becomes
$$T_{u_0}^2 f(y) = u_0(y)^2 f(y) - u_0(y) \sum_{j=1}^N \frac{c_j f(p_j)}{y - p_j} - \sum_{j=1}^N \frac{c_j T_{u_0} f(p_j)}{y - p_j},$$
which is exactly the second formula in Lemma \ref{lemma:Tu0Tu02}. 
\end{remark}

With Proposition \ref{prop:BOhier-weak} and Lemma \ref{lemma:Tun-general} in hand, we can now prove Theorem \ref{thm:zerolimit}. Since the argument follows a similar structure as in the third-order case, we only emphasize the modifications specific to the general $n$-th flow.

\begin{proof}[Proof of Theorem \ref{thm:zerolimit}]
We use the same initial datum $u_0(y)$ with simple poles $p_j$ and coefficients $c_j$ in  \eqref{eq:initial-datum}. The characteristic equation for the $n$-th order hierarchy is given by
\begin{align}\label{eq:BOnchara}
 y - (-1)^n (n+1)t \, u_0(y)^n = z, 
\end{align}
which corresponds to a polynomial equation of degree $2nN + 1$,
$$ y Q_0^n (y) - (-1)^n (n+1)t P_0^n(y) = z Q_0^n (y). $$	
For $z= x \in \mathbb{R}$, this equation has an odd number $2\ell+1$ of real roots. Except for a finite set of points $x$, these roots are simple and denoted by $y_0(t,x) < y_1(t,x) < \cdots < y_{2\ell}(t,x)$. Defining $G(y;z) := y - (-1)^n (n+1)t u_0(y)^n - z$, the derivative at a simple real root is 
$$ \partial_y G(y_k(t,x);x) = 1 - (-1)^n n(n+1)t\,u_0(y_k(t,x))^{n-1}u_0'(y_k(t,x)) \neq 0, \; 0 \leq k \leq nN. $$
The sign of this derivative alternates, namely,  $\operatorname{sign}(\partial_y G(y_k;x)) = (-1)^k$.
By the implicit function theorem and the Cauchy--Riemann equations, for $z = x + i\varepsilon$ with $\varepsilon > 0$ sufficiently small, the imaginary parts of the roots $y_k(t,z)$ satisfy
\begin{equation*}
	\text{Im } (y_{k}(t,z))	
	\begin{cases}
		> 0, \qquad k \text{ even}\\
		< 0, \qquad k \text{ odd}.
	\end{cases}
\end{equation*}
For such a complex number $z$, the formula 
\eqref{eq:BOnTn} gives
$$
\Pi ZD[u_0](t, z) = \frac{1}{2 \ii \pi }\, I_+ \Bigl[(X^* - (-1)^n	(n+1)t\,T_{u_0}^n - z\,\mathrm{Id})^{-1} (\Pi u_0)\Bigr].
$$
We now define the function $g_{t,z} \in L^2_+(\mathbb{R})$ as
$$ g_{t,z} := (X^* - (-1)^n (n+1)t\,T_{u_0}^n - z\,\mathrm{Id})^{-1} \Pi u_0. $$
Applying the identity $X^* f(y) = y f(y) + \frac{1}{2 \ii \pi } I_+(f) $, we have
\begin{align*}
	\Pi u_0 &= (X^* - (-1)^n (n+1)t\,T_{u_0}^n - z\,\mathrm{Id}) g_{t,z}(y) \\
	&= (y - z) g_{t,z}(y) + \lambda(t,z) - (-1)^n (n+1)t\, T_{u_0}^n g_{t,z}(y),
\end{align*}
where $\lambda(t,z) = \frac{1}{2 \ii \pi } I_+(f_{t,z})$.
Using Lemma \ref{lemma:Tun-general}, we substitute the expansion for $T_{u_0}^n f_{t,z}$ into the above equation,
\begin{align*}
	\Pi u_0 
	= &  (y - z) f_{t,z}(y) + \lambda(t,z) \\
	& - (-1)^n (n+1)t \left[ u_0(y)^n g_{t,z}(y) - \sum_{k=0}^{n-1} u_0(y)^{n-1-k} \sum_{j=1}^N \frac{c_j T_{u_0}^k g_{t,z}(p_j)}{y - p_j} \right] \\
	= & \left[ y - z - (-1)^n (n+1)t u_0(y)^n \right] g_{t,z}(y) + \lambda(t,z) \\
	& +  \sum_{k=0}^{n-1} u_0(y)^{n-1-k} \sum_{j=1}^N \frac{\tilde{c}_j T_{u_0}^k g_{t,z}(p_j)}{y - p_j}.
\end{align*}
where $\tilde{c}_j = (-1)^n (n+1)t c_j$ and the expressions of $T_{u_0}^k f(p_j)$ are given in Remark \ref{rqreccurence}.
Rearranging the terms, we find that $g_{t,z}(y)$ is a rational function of the form
\begin{align}\label{eq:f_tz_formula}
g_{t,z}(y) 
= \frac{u_0(y) - \lambda(t,z) - \sum_{k=0}^{n-1} u_0(y)^{n-1-k} \sum_{j=1}^N \frac{\mu_{j,k}(t,z)}{y - p_j}}{ y - z - (-1)^n (n+1)t u_0(y)^n },
\end{align}
where the coefficients $\mu_{j,k}(t,z)$ are related to the values $T_{u_0}^k f_{t,z}(p_j)$ and the coefficients $\tilde{c}_j$. This implies that the numerator of \eqref{eq:f_tz_formula} must vanish at all roots of the characteristic equation \eqref{eq:BOnchara} that lie in the upper half-plane. As previously established, for $z=x+i\varepsilon$, these are the $nN+1$ roots $\{y_{2k}\}_{k=0}^{nN}$. Evaluating the numerator at these points yields the following $(nN+1) \times (nN+1)$ linear system
\begin{equation*}
	\lambda (t, z)
	+ \sum_{k=0}^{n-2} u_0(y_{2k})^{n-1-k} \sum_{j=1}^N \frac{\mu_{j,k}(t, z)}{y_{2k} - p_j}
	+ \sum_{j=1}^N \frac{\mu_{j,n-1}(t, z)}{y_{2k} - p_j}
	=  u_0 (y_{2k}),
\end{equation*}
for $k = 0, 1, \dots, nN$. This system generalizes \eqref{eq:lambdatz3} to the $n$-th order case. By Cramer's rule, we can express $\lambda(t,z)$ as the ratio of two determinants:
$$
\lambda (t,z) = \frac{\mathcal{N}}{\mathcal{D}},
$$
where 
\begin{align}
   & \mathcal{D}
	=
	\det\Big(
	\mathcal{D}_{0,1},\dots,\mathcal{D}_{0,N},\;
	\mathcal{D}_{1,1},\dots,\mathcal{D}_{1,N},\;
	\dots,\;
	\mathcal{D}_{n-1,1},\dots,\mathcal{D}_{n-1,N},  \mathcal{D}_0
	\Big), \label{eq:detNDn} \\
	& \mathcal{N}
	=
	\det\Big(
	\mathcal{D}_{0,1},\dots,	\mathcal{D}_{0,N},\;
	\mathcal{D}_{1,1},\dots,	\mathcal{D}_{1,N},\;
	\dots,\;
	\mathcal{D}_{n-1,1},\dots,	\mathcal{D}_{n-1,N}, \mathcal{N}_0
	\Big), \label{eq:detNDn1}
\end{align}
with
\begin{align*}
	& \mathcal{D}_0 = (1,1,\dots,1)^T \in \mathbb{C}^{(nN +1) \times 1},\quad  \mathcal{N}_0 = \big( u_0(y_0), u_0(y_2), \dots, u_0(y_{2nN}) \big)^T \in \mathbb{C}^{(nN +1) \times 1}, \\
	& \mathcal{D}_{m,\nu} =
	\left(
	\frac{u_0^m(y_0) }{y_0-p_\nu}, \frac{u_0^m(y_2) }{y_2-p_\nu},
	\dots,
	\frac{ u_0^m(y_{2 nN})}{y_{2nN}-p_\nu}
	\right)^T  \in \mathbb{C}^{(nN +1) \times 1},  0 \leq  m \leq n-1,\  1 \leq \nu \leq N.
\end{align*}

Before proceeding with the proof, we present a generalized version of Proposition \ref{prop:ND_n=2} to express the determinants $\mathcal{D}$ and $\mathcal{N}$ in terms of Vandermonde-type determinants.

\begin{proposition}\label{prop:ND_N}
Let $u_0$ be as in Theorem \ref{thm:zerolimit}. For fix $z \in \mathbb{C}$, let $\{y_{2k}\}_{k=0}^{2N}$ be zeros in the upper half-plane of the equation \eqref{eq:BOnchara}, as above.

Then, up to a constant $K$, the determinants $\mathcal{D}$ and $\mathcal{N}$ given in \eqref{eq:detNDn} and \eqref{eq:detNDn1} can be reduced the following determinants, respectively,
	\begin{align*}
		& \mathcal{D} =
		\frac{K}{\prod_{k=0}^{nN} \prod_{\nu=1}^N ( \alpha t v_k^n + z - p_\nu)}
		\det \mathcal{V}(v_0, \cdots, v_{nN})
		, \\
		&	\mathcal{N}
		=
		\frac{K}{\prod_{k=0}^{nN} \prod_{\nu=1}^N ( \alpha t v_k^n + z - p_\nu)} \det \widetilde{\mathcal{V}}(v_0, \cdots, v_{nN}),
	\end{align*}
where $v_k : = u_0(y_{2k})$, $\alpha = (-1)^n (n+1)$ for $n\geq 2$, and $\mathcal{V}$ is the standard Vandermonde matrix and $\widetilde{\mathcal{V}}$ a missing power Vandermonde 
\begin{align}\label{eq:vandern}
	\mathcal{V}
	= 
	\begin{pmatrix}
		1 & v_{k} & \dots &  v_{k}^{nN-1} &  v_{k} ^{nN} \\
	\end{pmatrix}_{k=0}^{nN},\quad
	\widetilde{\mathcal{V}}
	= 
	\begin{pmatrix}
		1 & v_{k} & \dots &  v_{k}^{nN-1} &  v_{k} ^{nN+1} \\
	\end{pmatrix}_{k=0}^{nN}.
\end{align}
\end{proposition}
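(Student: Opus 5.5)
The plan mirrors the proof of Proposition~\ref{prop:ND_n=2}, with the $\mathbb{Z}/2$ parity splitting replaced by a splitting of exponents modulo $n$.

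\textbf{Step 1: rewrite the entries as rational functions of $v_k$.} Each selected root satisfies $y_{2k}=z+\alpha t v_k^n$ with $v_k=u_0(y_{2k})$. Hence every entry of $\mathcal D_{m,\nu}$ equals
\[
\frac{v_k^m}{\alpha t v_k^n+z-p_\nu}.
\]

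\textbf{Step 2: clear denominators.} Set
\[
Q(v):=\prod_{\nu=1}^N(\alpha t v^n+z-p_\nu),\qquad P_\nu(v):=\prod_{\mu\neq\nu}(\alpha t v^n+z-p_\mu).
\]
Multiply the $k$-th row by $Q(v_k)$. This gives
\[
\mathcal D=\frac{1}{\prod_k Q(v_k)}\det\big(v_k^mP_\nu(v_k)\ (0\le m\le n-1,\ 1\le\nu\le N),\ Q(v_k)\big)_{k=0}^{nN}.
\]
The same holds for $\mathcal N$, with the last column replaced by $v_kQ(v_k)$.

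\textbf{Step 3: degree count.} $P_\nu$ is a polynomial in $v^n$ of degree $n(N-1)$, so $v^mP_\nu$ has degree at most $nN-1$. $Q$ has degree exactly $nN$, with leading coefficient $(\alpha t)^N$. All $nN+1$ polynomials therefore lie in $\mathbb{C}[v]_{\le nN}$. Expanding in monomials factors the matrix as $\mathcal V(v_0,\dots,v_{nN})\cdot\mathrm{Coe}_D$. For $\mathcal N$, no polynomial contains $v^{nN}$: the $v^mP_\nu$ have degree $<nN$, and $vQ$ contains only exponents $\equiv1 \pmod n$, and $nN\not\equiv 1$ when $n\ge2$. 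The relevant monomials are thus $1,\dots,v^{nN-1},v^{nN+1}$, giving $\widetilde{\mathcal V}\cdot\mathrm{Coe}_N$.

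\textbf{Step 4: compute the coefficient determinants.} This is the main step. Write $P_\nu(v)=\sum_{r=0}^{N-1}A_{r,\nu}v^{nr}$ with $A_{r,\nu}=(\alpha t)^r e^{(\nu)}_{N-1-r}(z-p_1,\dots,z-p_N)$. Then $v^mP_\nu$ only involves monomials $v^{nr+m}$, i.e.\ exponents $\equiv m \pmod n$. Reorder the monomial basis by residue class modulo $n$, namely $(1,v^n,\dots,v^{n(N-1)}\mid v,\dots\mid\cdots\mid v^{n-1},\dots)$, followed by the top monomial. Order the columns as $(P_\bullet\mid vP_\bullet\mid\cdots\mid v^{n-1}P_\bullet\mid Q)$. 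Then $\mathrm{Coe}_D$ becomes block upper triangular, with $n$ copies of $A=(A_{r,\nu})$ on the diagonal and last entry $(\alpha t)^N$. The lower-degree coefficients of $Q$, all in residue class $0$, only populate the last column above the diagonal. Hence
\[
\det\mathrm{Coe}_D=\pm(\alpha t)^N(\det A)^n=:K,
\]
where the sign comes from the permutation sorting exponents.

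For $\mathrm{Coe}_N$, the column $vQ$ has top coefficient $(\alpha t)^N$ on $v^{nN+1}$. Its other coefficients sit in residue class $1$ at positions below the top, i.e.\ above the diagonal in the last column. The same triangular structure therefore holds with identical diagonal blocks. The reordering permutation is the same: in both bases the top monomial is placed last and the remaining monomials $1,\dots,v^{nN-1}$ are the same. So $\det\mathrm{Coe}_N=K$, and the formulas for $\mathcal D$ and $\mathcal N$ follow.

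One must also check that $A$ is nonsingular, or at least that $K$ cancels consistently. Up to powers of $\alpha t$ and sign, $\det A$ is a Vandermonde determinant in the $z-p_\nu$. It is therefore nonzero for distinct poles, matching the linear independence needed in the $n=2$ case.

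The delicate points are the sign bookkeeping and the fact that $n\ge2$ guarantees the missing power is exactly $v^{nN}$. For $n=1$ we would have $nN+1\equiv nN$ mod $n$, which breaks the block structure; this explains the restriction $n\geq 2$ in the statement.
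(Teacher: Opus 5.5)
Your proposal is correct and follows the same route as the paper's proof: you rewrite the entries via $y_{2k}=z+\alpha t v_k^n$, clear denominators with $Q(v_k)$, factor through the (missing-power) Vandermonde matrix, and evaluate the coefficient determinant through the block-triangular structure obtained by sorting exponents modulo $n$, giving $K=\pm(\alpha t)^N(\det A)^n$ for both $\mathcal D$ and $\mathcal N$. Your extra checks make explicit points the paper only asserts (it infers the basis property merely from counting columns): that $vQ$ has no $v^{nN}$ term because $n\ge 2$, that the same row permutation (hence the same sign) arises in both cases, and that $\det A\neq 0$ because it is, up to powers of $\alpha t$ and a sign, a Vandermonde determinant in the $z-p_\nu$.
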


\begin{proof}
The proof is a straightforward generalization of the case $n=2$ (Proposition~\ref{prop:ND_n=2}), so we only indicate the main steps and point out the necessary modifications.

First, for $0 \leq k \leq nN$, we write the $k$-th row of the determinant $\mathcal{D}$ in the form
	\begin{align*}
		\Big(
		& \frac{1}{\alpha t v_k^n + z - p_1},\dots,\frac{1}{\alpha t v_k^n + z - p_N},\;
		\frac{v_k}{\alpha t v_k^n + z - p_1},\dots,\frac{v_k}{\alpha t v_k^n + z - p_N},\; \\
		& \dots,\;
		 \frac{v_k^{n-1}}{\alpha t v_k^n + z - p_1},\dots,\frac{v_k^{n-1}}{\alpha t v_k^n + z - p_N}, 1
		\Big) .
	\end{align*}
Let us define the polynomials (in the formal variable $v$),
\begin{align*}
&Q(v) := \prod_{\nu=1}^N (\alpha t v^n + z - p_\nu), \\
& P_{m, \nu} (v) := v^m \prod_{\mu \ne \nu}^N (\alpha t v^n + z - p_\mu), \;0 \leq  m \leq n-1, 1 \leq \nu \leq N.
\end{align*}
These polynomials will be evaluated at $v = v_k$. 

Multiplying the $k$-th row by $Q(v_k)$, we obtain
	$$
	\mathcal D
	=
	\frac{1}{\prod_{k=0}^{nN} Q(v_k)}
	\det\Big(
	P_{0,1}(v_k), \dots, P_{0,N}(v_k),\;
	\dots,\;
	P_{n-1,1}(v_k), \dots, P_{n-1,N}(v_k),\;
	Q(v_k)
	\Big)_{k=0}^{nN}.$$
We note that $\deg Q(v) = nN$, and $\deg P_{m, \nu} (v) = m + n (N-1) \leq  nN -1$, for $0 \leq  m \leq n-1$,
so these polynomials have degree at most $nN$, and since the number of columns is exactly $nN+1$, these polynomials form a basis of $\mathbb{C}[v]_{\le nN}$. Expanding these polynomials in the monomial basis $(1,v,\dots,v^{nN})$, we obtain a factorization
	\begin{align*}
		& \Big(
		P_1(v_k),\dots,P_N(v_k),\;
		v_kP_1(v_k),\dots,v_k P_N(v_k), \cdots, \\
		&  v_k^{n-1}P_1(v_k),\dots,v_k^{n-1} P_N(v_k)
		Q(v_k)
		\Big)_{k=0}^{nN} 
		= \mathcal V(v_0,\dots,v_{nN}) \cdot \mathrm{Coe}_D,
	\end{align*}
where $\mathcal V$ is the Vandermonde matrix and $\mathrm{Coe}_D$ is the coefficient matrix. Therefore,
	\begin{align*}
		\mathcal D
		= &
		\frac{1}{\prod_{k=0}^{nN} \prod_{\nu=1}^N (\alpha t v_k^n + z - p_\nu)}
		\det \mathcal V(v_0,\dots,v_{nN}) \cdot \det(\mathrm{Coe_D}),
	\end{align*}
In order to treat the coefficient matrix $\mathrm{Coe}_D$, we observe that each polynomial $P_{m,\nu}(v)$ only contains monomials of degree $m \pmod n$, while $Q(v)$ contains only powers divisible by $n$, and its highest-degree term is $(\alpha t)^N v^{nN}$. After reordering the monomial basis like in the Proposition \ref{prop:ND_n=2} , the coefficient matrix becomes block upper triangular, with $n$ identical $N\times N$ diagonal blocks coming from the polynomials $P_{m,\nu}$ and one $1\times1$ block $(\alpha t)^N$ coming from polynomial $Q$. Consequently,
	\begin{align*}
		\det(\mathrm{Coe_D}) 
		& =  \pm  
		\begin{vmatrix}
			N \times N \; \text{block} & 0 &\cdots & 0 & *\\
			\cdots &  \cdots & \cdots  & \cdots & * \\
			0 &  0&  \cdots & N \times N \; \text{block}  & * \\
			0 & 0 & \cdots &  0 & (\alpha t)^{N}\\
		\end{vmatrix}\\
		& = \pm (\alpha t)^{N} \det (N \times N \; \text{block})^n := K
	\end{align*}
where all $N\times N$ block is formed by the coefficients of $\prod_{\mu\neq \nu} (\alpha t v^n + z - p_\mu)$, and the sign comes from permutations of the basis.
	
For the determinant $\mathcal N$, the argument is exactly the same, except that the last column $Q(v_k)$ is replaced by $v_k Q(v_k)$. The latter has degree $nN+1$ and leading term $(\alpha t)^N v^{nN+1}$, while all other columns still have degree at most $nN-1$. Therefore the monomial basis becomes
$
	(1, v, \dots, v^{nN-1}, v^{nN+1}),$
which produces the missing-power Vandermonde matrix $\widetilde{\mathcal V}$.
Using the same block decomposition, the highest-degree coefficient is still $(\alpha t)^N$. Hence,
$\det(\mathrm{Coe}_N) = \det(\mathrm{Coe}_D) = K.
$
This proves the stated formula for $\mathcal N$ and completes the proof.
\end{proof}

We now return to the proof of Theorem \ref{thm:zerolimit}. 

First, by applying the same argument as in Lemma \ref{lemma:ratio-Vander}, we obtain the identity
\begin{equation*}
	\det \widetilde{\mathcal{V}}(v_0, \cdots, v_{nN})= \sum_{k=0}^{nN} v_k \det \mathcal{V} (v_0, \cdots, v_{nN}),
\end{equation*}
where $v_k := u_0(y_{2k})$ for $0 \leq k \leq nN$, and the matrices $\mathcal{V}$ and $\widetilde{\mathcal{V}}$ are as defined in \eqref{eq:vandern}.
By Proposition \ref{prop:ND_N}, the ratio $\lambda(t,x)$ simplifies as follows
$$
\lambda(t, x) = \frac{\mathcal{N}}{\mathcal{D}} = \frac{\det \widetilde{\mathcal{V}}(v_0, \cdots, v_{nN})}{\det \mathcal{V}(v_0, \cdots, v_{nN})} = \sum_{k=0}^{nN} v_k = \sum_{k=0}^{nN}u_0(y_{2k}).
$$
Recalling that $
ZD [u_0] (t, x) =\lambda(t, x)+ \overline{ \lambda(t, x)}$, it follows that
\begin{align*}
	ZD [u_0] (t, x)
	 =  \sum_{k=0}^{nN}u_0(y_{2k}(t, x)) + \sum_{k=0}^{nN}u_0( \overline{y_{2k}(t, x)})
	& = 2 \sum_{k=0}^{\ell}u_0(y_{2k}(t, x)) + \sum_{ k =2 \ell+1}^{2nN}u_0(y_{k}(t, x)) \\
    & = \sum_{\substack{0 \le k \le 2nN \\ y_k \in \mathbb{R}}} (-1)^k\, u_0(y_k),
\end{align*}
where we used Lemma \ref{lemma:fracPQn=2} (see Appendix) in the final equality.
This completes the proof of the formula \eqref{eq:zerolimitgeo} for the case when $u_0(y)$ is rational. The extension to the non-rational case follows by a standard density argument as in the proof of Theorem \ref{thm: mainBO3eps} and is therefore omitted.
This ends the proof of Theorem \ref{thm:zerolimit}.
\end{proof}

\begin{remark}\label{remarkvander}
The Proposition \ref{prop:ND_N} requires $n\geq 2$. The polynomial family $\{Q, P_{0,1}, \cdots,  P_{0,N}\}$ is no longer linearly independent, when $n=1$. Indeed, by observing that $Q(v) = (\alpha t v + z - p_\nu) P_{0, \nu} (v)$ for each $\nu$, one has the identity
\begin{align*}
	Q(v) = \frac{1}{N} \sum_{\nu =1}^{N} (\alpha t v + z - p_\nu) P_{0, \nu} (v),
\end{align*}
where $P_{0, \nu} (v) := \prod_{\mu \neq \nu}^N (\alpha t v + z - p_\mu)$.
Hence, $Q(v)$ belongs to the linear span of $\{P_{0, \nu} (v)\}_{\nu =0}^N$. Therefore the coefficient matrix $\mathrm{Coe}_D$ becomes singular and the block triangular structure used in the proof collapses. As a consequence, the Vandermonde reduction does not apply to the case $n=1$, which must be treated separately (it reduces instead to a Cauchy-type determinant). More about the case $n=1$, see \cite{Gerard2025small}.
\end{remark}

\section{Appendix}\label{secappendix}

\begin{lemma}\label{lemma:fracPQn=2}
Let $u_0(y)$ be a rational function with real coefficients, with no pole on the real line,
$$
u_0(y)= \frac{P_0(y)}{Q_0(y)},
$$
where $Q_0(y)$ is a polynomial of degree $2N$ and $P_0(y)$ is a polynomial of degree $2N-1$. Assume the zeros of $Q_0(y)$ are simple and $P_0(y)$ does not vanish at the zeros of $Q_0(y)$.
Given $t\neq0$, denote by $y_0,\dots,y_{4N}$  the $4N+1$ roots of $p(y)=(y-x)Q^2_0(y)-3t\,P^2_0(y)$.
Then
\begin{equation}\label{eq:fracPQn=2}
\sum_{j=0}^{4N} u_0(y_j)= 0.
\end{equation}
More generally, let
$ \tilde p(\tilde y)=(\tilde y-x)Q^n_0(\tilde y)- (-1)^n (n+1) t\,P^n_0(\tilde y), t\neq0,$
and let $\tilde y_0,\dots,\tilde y_{2nN}$ denote the $2nN+1$ roots of $\tilde p(y)$.
Then
$$\sum_{j=0}^{2nN} u_0(\tilde y_j) = 0.$$
\end{lemma}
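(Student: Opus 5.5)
We need to prove that the sum of $u_0$ over all roots of $\tilde p$ vanishes. The plan is to express the sum as a sum of residues of a rational function and show that the residue at infinity is zero. Write $u_0=P_0/Q_0$ and set $\beta:=(-1)^n(n+1)t$, so that
\[
\tilde p(y)=(y-x)Q_0^n(y)-\beta P_0^n(y),
\]
which has degree $2nN+1$ and leading coefficient $1$, since $\deg P_0^n\le n(2N-1)$.

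We first assume the roots $\tilde y_j$ are simple; the general case will follow by continuity in $x$, or by counting residues with multiplicity in the argument below. Since $Q_0$ and $P_0$ have no common zero, no $\tilde y_j$ is a zero of $Q_0$. Indeed, if $Q_0(\tilde y_j)=0$, then $\tilde p(\tilde y_j)=-\beta P_0^n(\tilde y_j)\neq0$. Hence each $u_0(\tilde y_j)$ is finite. Consider the rational function
\[
R(y):=\frac{P_0(y)\,Q_0^{n-1}(y)}{\tilde p(y)} .
\]
At each root we have $\operatorname{Res}_{\tilde y_j}R = P_0Q_0^{n-1}(\tilde y_j)/\tilde p'(\tilde y_j)$. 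This is a weighted sum, not the plain sum we want, so $R$ is the wrong function. Instead I would use the logarithmic-derivative trick: $\sum_j g(\tilde y_j)=\sum_j \operatorname{Res}_{\tilde y_j}\, g\,\tilde p'/\tilde p$ with $g=u_0$. The function $u_0\tilde p'/\tilde p$ also has poles at the zeros $q$ of $Q_0$, so the residue theorem gives
\[
\sum_j u_0(\tilde y_j)=-\sum_{q:\,Q_0(q)=0}\operatorname{Res}_q\Big(u_0\frac{\tilde p'}{\tilde p}\Big)-\operatorname{Res}_\infty\Big(u_0\frac{\tilde p'}{\tilde p}\Big).
\]

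The residue at infinity is handled by degree counting. As $y\to\infty$, $u_0=O(1/y)$ with $u_0(y)\sim a/y$, where $a$ is the leading coefficient ratio of $P_0/Q_0$. Also $\tilde p'/\tilde p=(2nN+1)/y+O(y^{-2})$. The product is therefore $O(y^{-2})$, so the residue at infinity vanishes. If $\deg P_0<2N-1$ this holds even more easily.

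The main step is the residue at a simple zero $q$ of $Q_0$. Near $q$, $u_0$ has a simple pole with residue $P_0(q)/Q_0'(q)$. Writing $\tilde p=(y-x)Q_0^n-\beta P_0^n$, I will show that $\tilde p'/\tilde p$ vanishes at $q$ whenever $n\ge2$. Indeed, $\tilde p(q)=-\beta P_0^n(q)\neq0$ and
\[
\tilde p'(q)=-n\beta P_0^{n-1}(q)P_0'(q)+\big[(y-x)Q_0^n\big]'(q)=-n\beta P_0^{n-1}P_0'(q),
\]
because $Q_0^n$ vanishes to order $n\ge2$ at $q$. This is nonzero in general, so the residue $\frac{P_0(q)}{Q_0'(q)}\cdot\frac{\tilde p'(q)}{\tilde p(q)}=\frac{n P_0'(q)}{Q_0'(q)}$ need not vanish. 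This is the obstacle, and I expect it to resolve after summing over $q$.

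The plan is to rewrite $\tilde p'/\tilde p=\frac{d}{dy}\log\tilde p$ and factor $\tilde p=-\beta P_0^n\,(1-(y-x)u_0^{-n}/\beta)$. Then
\[
\frac{\tilde p'}{\tilde p}=n\frac{P_0'}{P_0}+\frac{d}{dy}\log\big(1-(y-x)u_0^{-n}/\beta\big).
\]
The second term is holomorphic at $q$, since $u_0^{-n}$ vanishes to order $n$ there, and its product with $u_0$ has residue $0$ at $q$ because the derivative vanishes to order $n-1\ge1$. So the residue at $q$ equals $n\operatorname{Res}_q(P_0'/Q_0)$. Summing over all zeros $q$ of $Q_0$ gives
\[
n\sum_q\operatorname{Res}_q\frac{P_0'}{Q_0}=-n\operatorname{Res}_\infty\frac{P_0'}{Q_0}=0,
\]
because $\deg P_0'\le 2N-2=\deg Q_0-2$. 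The zeros of $P_0$ contribute no extra poles, since $u_0\,P_0'/P_0=P_0'/Q_0$.

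For $n=2$ this gives the first claim. For $n=1$ the same computation works, since the second term of $\tilde p'/\tilde p$ is $\frac{d}{dy}\log\big(1-(y-x)Q_0/(\beta P_0)\big)$, which is holomorphic at $q$, and $u_0$ times it has a residue at $q$ equal to $\frac{P_0(q)}{Q_0'(q)}\cdot\frac{-Q_0'(q)}{\beta P_0(q)}\cdot(q-x)\cdot(\ldots)$. This case needs an extra check, but only $n\ge2$ is used in the paper. Hence $\sum_j u_0(\tilde y_j)=0$.
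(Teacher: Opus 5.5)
Your argument is correct for $n\ge 2$ and is essentially the paper's proof. The residue bookkeeping for $u_0\tilde p'/\tilde p$ is the paper's partial-fraction expansion $u_0=\sum_a \frac{c_a}{y-a}$ combined with $\sum_j\frac{1}{y_j-a}=-\frac{p'(a)}{p(a)}$. Both reduce to $\tilde p'(a)/\tilde p(a)=nP_0'(a)/P_0(a)$ at the zeros of $Q_0$ and then to $\sum_a P_0'(a)/Q_0'(a)=0$ by degree counting; your direct evaluation already gives the first identity, so the logarithmic factorization is a redundant detour.

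Your aside on $n=1$ is wrong, though. There the term $(q-x)Q_0'(q)$ in $\tilde p'(q)$ survives, and one gets $\sum_j u_0(\tilde y_j)=\frac{1}{2t}\bigl(2Nx-\sum_{Q_0(q)=0}q\bigr)$, which is nonzero in general. So the restriction $n\ge 2$, implicit in the paper, is genuinely necessary rather than a convenience.
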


\begin{proof}
Since the  roots of $Q_0(y)$ are simple,  the rational function $\frac{P_0(y)}{Q_0(y)}$ can be decomposed as
	\begin{align}\label{eq:fracPQ}
		\frac{P_0(y)}{Q_0(y)} = \sum_{a :\, Q_0(a) = 0} \frac{c_a}{\,y-a\,}, \qquad \textrm{with} \quad c_a = \frac{P_0(a)}{Q'_0(a)}. 
	\end{align}
Next, for any $b\in\mathbb{C}$ with $p(b)\neq0$ the standard logarithmic-derivative identity for the monic polynomial $p(y)=\prod_{j=0}^{4N}(y-y_j)$ gives
	\begin{align*}	
		\sum_{j=0}^{4N}\frac{1}{y_j-b} \;=\; -\frac{p'(b)}{p(b)}.
	\end{align*}
Applying the logarithmic-derivative identity above to the decomposition \eqref{eq:fracPQ}, we have
	\begin{align*}
		\sum_{j=0}^{4N} \frac{P_0(y_j)}{Q_0(y_j)}
		= \sum_{j=0}^{4N}\sum_{a:\,Q_0(a)=0}\frac{P_0(a)}{Q_0'(a)}\frac{1}{y_j-a} 
		& = \sum_{a:\,Q_0(a)=0}\frac{P_0(a)}{Q_0'(a)}\sum_{j=0}^{4N}\frac{1}{y_j-a} \\
		& = -\sum_{a:\,Q(a)=0}\frac{P_0(a)}{Q_0'(a)}\frac{p'(a)}{p(a)}.
	\end{align*}
We evaluate $p(a)$ and $p'(a)$ at a zero $a$ of $Q_0$. Since $Q_0(a)=0$, we have
	$$ p(a) = -3t\,P_0(a)^2,$$ 
and differentiating $p$ gives
	$$p'(y)=Q_0(y)^2+2(y-x)Q_0(y)Q_0'(y)-6tP_0(y)P_0'(y),$$
thus,
	$$ p'(a) = -6t\,P_0(a)P_0'(a).$$
Therefore
	$$\sum_{j=0}^{4N}\frac{P_0(y_j)}{Q_0(y_j)}
	= -2\sum_{a:\,Q_0(a)=0}\frac{P_0'(a)}{Q_0'(a)}.$$	
Thus it remains to show
	\begin{align}\label{eq:fracP-Q}
		\sum_{a:\,Q_0(a)=0}\frac{P_0'(a)}{Q_0'(a)} = 0.
	\end{align}
To prove this, we decompose the rational function $\frac{P_0'(y)}{Q_0(y)}$, by applying the claim \eqref{eq:fracPQ},
	$$
	\frac{P_0'(y)}{Q_0(y)}=\sum_{a:\,Q_0(a)=0}\frac{1}{y-a} \frac{P_0'(a)}{Q_0'(a)}.
	$$
Next, expanding the right-hand side in powers of $\frac{1}{y}$ at infinity gives
	$$
	\sum_{a:\,Q_0(a)=0} \frac{1}{y-a} \frac{P_0'(a)}{Q_0'(a)} = \frac{1}{y}\sum_{a:\,Q_0(a)=0}\frac{P_0'(a)}{Q_0'(a)} + O(y^{-2}).
	$$
On the other hand, since
	$\deg P_0' = 2N-2$ and $\deg Q_0=2N$, we have 
	$
	\left|\frac{P'_0(y)}{Q_0(y)}\right| \leq \frac{C}{y^2},$
which forces the coefficient of $\frac{1}{y}$ to vanish and thus yields \eqref{eq:fracP-Q}. This completes the proof of identity \eqref{eq:fracPQn=2}. As the proof for the general case follows by an analogous argument, it is omitted here.
\end{proof}	

\textbf{Acknowledgements.} The second author would like to thank Louise Gassot, Ola M{\ae}hlen for discussions during the writing of this paper. The two authors are supported by the French Agence Nationale de la Recherche under grant number ANR project ISAAC–ANR-23–CE40-0015-01.

\bibliography{mytexbib}
\bibliographystyle{siam}

\end{document}